\newcommand{\eps}{\varepsilon}
\newcommand{\bc}{\begin{center}}
\newcommand{\ec}{\end{center}}
\DeclareMathOperator{\supp}{supp}
\newtheorem{thm}{Theorem}[section]
\newtheorem{lem}[thm]{Lemma}
\newtheorem{prop}[thm]{Proposition}
\newtheorem{cor}[thm]{Corollary}
\theoremstyle{definition}
\newtheorem{df}[thm]{Definition}
\newtheorem{rem}[thm]{Remark}
\title[There is a P-measure in the random model]{There is a P-measure in the random model}
\author{Piotr Borodulin--Nadzieja}
\address[Piotr Borodulin-Nadzieja]{Mathematical Institute, University of Wroc\l aw, \ \ \  pl. Grunwaldzki 2, 50-384 Wroc\l aw, Poland}
\email{pborod@math.uni.wroc.pl}
\author{Damian Sobota}
\address[Damian Sobota]{Kurt G\"odel Research Center for Mathematical Logic, Department of Mathematics, University of Vienna, Kolingasse 14-16, 1090 Vienna, Austria}
\email{damian.sobota@univie.ac.at}
\thanks{The first author was supported by the 
 National Science Center project no. 2018/29/B/ST1/00223. The second author was supported by the Austrian Science Fund (FWF) grant no. 4570-N}
\subjclass[2010]{Primary: 03E05, 03E35. Secondary: 03E75, 28E15.}
\keywords{P-points, P-measures, Additive Property, random forcing, random model}
\begin{document}

\maketitle

\begin{abstract}
We say that a finitely additive probability measure $\mu$ on $\omega$ is \emph{a P-measure} if it vanishes on points and for each decreasing sequence $(E_n)$ of infinite subsets of $\omega$ there is $E\subseteq\omega$ such that $E\subseteq^* E_n$ for each $n\in\omega$ and $\mu(E) = \lim_{n\to\infty}\mu(E_n)$. Thus, P-measures generalize in a natural way  P-points and it is known that, similarly as in the case of P-points, their existence is independent of $\mathsf{ZFC}$. In this paper we show that there is a P-measure in the model obtained by adding any number of random reals to a model of $\mathsf{CH}$. As a corollary, we obtain that in the classical random model $\omega^*$ contains a nowhere dense ccc closed P-set.
\end{abstract}

\section{Introduction}

We say that a (finitely additive) probability measure $\mu$ on $\omega$ is \emph{a P-measure} if it vanishes on points and for each decreasing sequence $(E_n)$ of infinite subsets of $\omega$ there is $E\subseteq\omega$ such that $E\subseteq^* E_n$ for each $n\in\omega$ and
$\mu(E) = \lim_{n\to\infty}\mu(E_n)$ (see Section \ref{sec:prelim} for the unexplained terminology).

The importance of this notion lies in the fact that it generalizes the notion of $P$-points in $\omega^*$ in a natural way. Indeed, consider an ultrafilter $\mathcal{U}$ on $\omega$ and let $\mu\colon\wp(\omega)\to\{0,1\}$ be the one-point measure concentrated in $\mathcal{U}$, i.e., for every $A\in\wp(\omega)$, $\mu(A)
= 1$ if and only if $A\in \mathcal{U}$. Then, $\mu$ is a P-measure if and only if $\mathcal{U}$ is a P-point. 
Moreover, if $\mathcal{U}$ is a P-point, then the measure $\nu$ on $\omega$,  defined for every $A\in\wp(\omega)$ by the formula
\[ \nu(A) = \lim_{n\to \mathcal{U}} \frac{|A \cap n|}{n}\]
is a non-atomic P-measure (extending the asymptotic density). So, every P-point \emph{is} a P-measure and, moreover, it induces a P-measure which additionally is non-atomic. 

The converse issue however, that is, whether the existence of P-measures implies the existence of P-points, is so far unresolved. It was considered by Mekler in \cite{Mekler} (where the P-measures were introduced for the first time, under the name of \emph{measures with the additive
property}, or \emph{with (AP)} in short), by
Blass \emph{et al.} in \cite{PlebanekRyll} (where P-measures are called \emph{measures with AP(*)}) and by Greb\'\i k in \cite{Grebik} (where P-measures are also called \emph{measures with AP(*)}). Unfortunately, all the constructions of P-measures presented in those articles involve more or less directly
P-points.

Recall that basically only two models of $\mathsf{ZFC}$ without P-points are known: the classical construction due to Shelah and its variants (see \cite{Shelah-proper}, cf. also \cite{FSZ} and \cite{FGZ}), and the Silver model (see \cite{David-Osvaldo}). 
Mekler \cite{Mekler} showed than in the model used by Shelah there are no P-measures. It is unknown if there is a P-measure in the
Silver model.

Also, it is still open if there is a P-point in the classical random model, i.e. the model obtained by adding $\omega_2$ random reals to a model satisfying the Continuum Hypothesis ($\mathsf{CH}$). Although in \cite{Cohen} the author claimed to answer this problem in positive, it has recently appeared that his proof contains an essential gap (see \cite{David-Osvaldo}). Recall that Kunen proved that if we first add $\omega_1$ Cohen reals to the ground model and then $\omega_2$ random reals, then we get a model with a P-point (see \cite{Jorg-problems}). Recently, Dow \cite{Dow-Pfilters} also
showed that if we add $\omega_2$ random reals to a model of $\mathsf{CH}+\square_{\omega_1}$, then the resulting model contains a P-point.

The main purpose of this paper is to show that in the classical random model there is a P-measure (Corollary \ref{cor:random_ap}). Our measure is not defined using any P-point from the extension (we still do not know if there exists one!) and it seems to be the first example of this kind. The construction exploits some ideas of Solovay from \cite{Solovay} (and so we call the resulting measure \emph{a Solovay measure}). It proceeds more or less as follows. We start with an ultrafilter $\mathcal{U}$ in the ground model $V$ and we generate, in a certain way, a name $\dot{\mu}_\mathcal{U}$ for a finitely additive measure on $\omega$. In the random extension $V[G]$, the resulting measure $(\dot{\mu}_\mathcal{U})_G$ in a sense extends the ground model one-point measure $\delta_\mathcal{U}$ concentrated in $\mathcal{U}$, and we show that whenever $\mathcal{U}$ is a ground model P-point, then $(\dot{\mu}_\mathcal{U})_G$ is a P-measure in $V[G]$ which do not extend the asymptotic density (Theorem \ref{thm:random_pp_ap} and Proposition \ref{rho_extends}). So,
perversely, we do use a P-point in the construction of our P-measure, but this P-point lives in another model.


We show that all Solovay measures are non-atomic (Theorem \ref{non-atomic}). Since the supports of non-atomic measures cannot contain P-points (cf. Proposition \ref{Ppoint-measure}), the same is true for Solovay measures. But, as the support of a P-measure is a nowhere dense ccc closed P-subset of $\omega^*$, we get that in the random model (or in any random extension of a model of $\mathsf{CH}$) there is a nowhere dense ccc closed P-set in $\omega^*$ (Corollary \ref{cor:random_ap2}). This is an interesting result as there are models without such subsets of $\omega^*$, see e.g. \cite{FSZ}.

It turns out however that supports of Solovay measures in random extensions can be used to obtain characterizations of various classes of ultrafilters on $\omega$ in the ground model. Notice that every ultrafilter $\mathcal{U}$ on $\omega$ from the ground model $V$ generates a closed subset $K_\mathcal{U}$ of $\beta\omega$ in the extension $V[G]$ (consisting of all ultrafilters extending the filter $\mathcal{U}$). We show that an ultrafilter $\mathcal{U}$ is semi-selective (in $V$) if and only if the measure $(\dot{\mu}_\mathcal{U})_G$ is strictly positive on $K_\mathcal{U}$, and so its support is the whole $K_\mathcal{U}$ (Theorem \ref{semi-selective}). This result sheds some new light on the well-known theorem, due to Kunen \cite{Kunen-special-points}, saying that no P-point in the random model can extend a ground model semi-selective ultrafilter. Indeed, as we already said, no P-point can be an element of the support of a non-atomic measure, so, if $\mathcal{U}$ is semi-selective, then no element of $K_\mathcal{U}$ can be a P-point (see Corollary \ref{cor:kunen}).

In the context of aforementioned Kunen's theorem it is natural to ask whether a P-point $\mathcal{V}$ in the random extension (assuming it exists) can at all extend a ground model ultrafilter $\mathcal{U}$. If this is the case, then it is easy to observe that $\mathcal{U}$ itself must be a P-point. However, by Kunen's result $\mathcal{U}$ cannot be semi-selective and so, as explained in the beginning of Section \ref{sec:canjar}, there exists a decreasing sequence $(E_n)$ of infinite subsets of $\omega$ (in the extension) such that $\mathcal{U}$ and $\{E_n\colon n\in\omega\}$ together generate a filter which cannot be further extended to a bigger filter by \emph{any} pseudointersection of the sequence $(E_n)$. On the other hand, 
if $\mathcal{U}$ is a Canjar ultrafilter, then 
for every decreasing sequence $(E_n)$ of infinite subsets of $\omega$, if $\mathcal{F} = \mathcal{U} \cup \{E_n\colon n\in \omega\}$ is a base of a filter, then there is a pseudointersection $E$ of $(E_n)$ such $\mathcal{F} \cup \{E\}$ is still a base of a filter (Proposition \ref{lem:canjar}). Hence, if there are ultrafilters in the ground model which can be extended to a P-point in the random extension, then Canjar ultrafilters should be considered as promising candidates. 

\section{Preliminaries\label{sec:prelim}}

$\wp(\omega)$ denotes the power set of $\omega$, that is, the family of all subsets of $\omega$. If $A,B\in\wp(\omega)$, then we write $A\subseteq^*B$ if the set $A\setminus B$ is finite, and $A=^*B$ if $A\subseteq^*B$ and $B\subseteq^*A$. \emph{An
antichain} in $\wp(\omega)$ is a sequence $(A_n)$ of pairwise disjoint subsets of $\omega$. If $(A_n)$ is a collection of subsets of $\omega$, then if $A_n \subseteq^* A$ (resp. $A\subseteq^* A_n$) for each $n\in\omega$, then $A$ is called \emph{a
pseudounion} (resp. \emph{a pseudointersection}) of $(A_n)$. Note that if $\bigcup_{n\in\omega} A_n\subseteq A$, then $A$ is a pseudounion of $(A_n)$ and hence $\omega$ is trivially a pseudounion of every family.

$Fin$ denotes the ideal of all finite subsets of $\omega$. Also, for every $A\in\wp(\omega)$ by $[A]^{<\omega}$ we denote the set of all finite subsets of $A$ and by $[A]^\omega$ the set of all infinite subsets of $A$.

If $\mathbb{A}$ is a Boolean algebra, then by $St(\mathbb{A})$ we denote its Stone space. Recall that $\mathbb{A}$ is isomorphic to the algebra $Clopen(St(\mathbb{A}))$ of all clopen subsets of $St(\mathbb{A})$; for every $A\in\mathbb{A}$ by
$[A]_\mathbb{A}$ we will denote the corresponding clopen subset of $St(\mathbb{A})$. Recall also that the Stone spaces $St(\wp(\omega))$ and $St(\wp(\omega)/Fin)$ are homeomorphic to $\beta\omega$, the \v{C}ech--Stone compactification of $\omega$,
and $\omega^*=\beta\omega\setminus\omega$, respectively. For every $A\in\wp(\omega)$ we will simply write $[A]$ for $[A]_{\wp(\omega)}$.

Let $X$ be a topological space. We say that $X$ is \emph{an almost P-space} if every non-empty $\mathbb{G}_\delta$ subspace of $X$ has non-empty interior. A subset $Y$ of $X$ is \emph{a P-set} if the intersection of countably many open sets containing $Y$ contains $Y$ in its interior. A point $x\in X$ is \emph{a P-point} in $X$ if the singleton $\{x\}$ is a P-set in $X$. If $X=\omega^*$, this definition of course coincides with the standard one of a P-point. 

All \emph{measures} considered on Boolean algebras are assumed to be non-negative, finitely additive, and finite (i.e. if $\mu$ is a measure on a Boolean algebra $\mathbb{A}$, then $\mu(1_\mathbb{A})<\infty$). A measure $\mu$ on a Boolean algebra $\mathbb{A}$ is \emph{probability} if $\mu(1_\mathbb{A})=1$. The space of all probability measures on $\mathbb{A}$ will be denoted by $P(\mathbb{A})$. All \emph{measures} on compact spaces are meant to be non-negative, $\sigma$-additive, and \emph{Radon}, that is, Borel, regular, and finite (i.e. if $\mu$ is a measure on a compact space $K$, then $\mu(K)<\infty$). Similarly as above, a measure $\mu$ on a compact space $K$ is \emph{probability} if $\mu(K)=1$. The space of all probability measures on $K$ will be denoted by $P(K)$. \emph{The support} of a measure $\mu$ on a compact space $K$ is denoted by $\supp(\mu)$.

If $\mu$ is a measure on the Boolean algebra $\wp(\omega)$, then we simply say that \emph{$\mu$ is a measure on $\omega$}. If $\mu$ is a measure on $\omega$ and $\mu(\{n\})=0$ for every $n\in\omega$, then we also say that \emph{$\mu$ vanishes on points}.

Recall that each measure $\mu$ on a Boolean algebra $\mathbb{A}$ extends to a unique measure $\check{\mu}$ on the Stone space $St(\mathbb{A})$ such that $\mu(A)=\check{\mu}([A]_\mathbb{A})$ for every $A\in\mathbb{A}$. Also, for every measure $\nu$ on
a totally disconnected compact space $K$, there is a measure $\mu$ on the Boolean algebra of clopen subsets of $K$ such that $\nu = \check{\mu}$. In particular, every measure on $\beta\omega$ is induced by a measure on $\omega$ and every measure on $\omega^*$
is induced by a measure on $\wp(\omega)/Fin$, or, equivalently, by a measure on $\omega$ which vanishes on points.

\section{Basic facts on P-measures}

In this section we will collect several general facts on P-measures (some of them are standard and folklore), which will be useful in the sequel. We start with the following simple lemma.

\begin{lem}\label{lem:basic}
\ 
\begin{enumerate}
	\item If $\mu$ is a measure on $\omega$ vanishing on points, then for every $A\in[\omega]^\omega$ there is $B\in[A]^\omega$ such that $\mu(B)=0$.

	\item The support of a measure on $\omega^*$ vanishing on points is nowhere dense in $\omega^*$.

	\item There is no strictly positive measure on $\omega^*$, that is, a measure $\mu$ on $\omega^*$ such that $\mu(U)>0$ for every clopen subset $U$ of $\omega^*$.
\end{enumerate}
\end{lem}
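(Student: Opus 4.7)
The proof naturally splits according to the three items. For (1), the plan is to build an infinite $B\subseteq A$ by diagonalizing along a decreasing chain $A = B_0\supseteq B_1\supseteq\cdots$ of infinite sets with $\mu(B_n)\to 0$. To construct $B_{n+1}$ inside $B_n$, I would partition $B_n$ into countably many pairwise disjoint infinite subsets $(C_k)_{k\in\omega}$; since $\sum_k \mu(C_k)\le \mu(B_n)<\infty$ by finite additivity, $\mu(C_k)\to 0$, so some $C_k$ has measure below $1/(n+1)$ and can serve as $B_{n+1}$. Picking $b_n\in B_n$ with $b_0<b_1<\cdots$ and setting $B = \{b_n : n\in\omega\}$ yields an infinite set with $B\setminus B_n\subseteq\{b_0,\ldots,b_{n-1}\}$ finite, so $B\subseteq^* B_n$. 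Hence $\mu(B) \le \mu(B_n)+\mu(B\setminus B_n)=\mu(B_n)$ for every $n$, and letting $n\to\infty$ gives $\mu(B)=0$.

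Part (2) should then reduce to (1) via the correspondence, noted in Section \ref{sec:prelim}, between measures on $\omega^*$ and measures on $\omega$ vanishing on points. Given a nonempty basic clopen $U\subseteq\omega^*$, I would write $U$ as the clopen associated with some infinite $A\subseteq\omega$, apply (1) to obtain an infinite $B\subseteq A$ with $\mu(B)=0$, and observe that the clopen of $\omega^*$ associated with $B$ is a nonempty open subset of $U$ disjoint from $\supp(\mu)$. Since this is possible inside every basic clopen, $\supp(\mu)$ has empty interior, so, being closed, it is nowhere dense.

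For (3), I would exhibit an uncountable family of pairwise disjoint nonempty clopens in $\omega^*$, which is incompatible with strict positivity of any finite measure. Take any almost disjoint family $\mathcal{A}\subseteq[\omega]^\omega$ of cardinality $\aleph_1$; for distinct $A,A'\in\mathcal{A}$ the intersection $A\cap A'$ is finite, so the clopens of $\omega^*$ associated with the elements of $\mathcal{A}$ are pairwise disjoint. A strictly positive $\mu$ would assign each of them positive measure, but a finite measure can give positive mass to only countably many pairwise disjoint sets, contradiction.

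The only delicate point is part (1): the diagonal set $B$ is merely almost contained in each $B_n$, so concluding $\mu(B)=0$ rather than just that $\mu(B)$ is small uses crucially that $\mu$ vanishes on points (and hence on all finite sets). Once (1) is in hand, the other two parts are essentially immediate consequences of it and of the size of the Boolean algebra $\wp(\omega)/Fin$.
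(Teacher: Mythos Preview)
Your proof is correct. The interesting comparison is in part (1): you diagonalize along a chain $B_0\supseteq B_1\supseteq\cdots$ obtained by repeatedly partitioning into countably many disjoint infinite pieces and picking one of small measure, whereas the paper goes straight to an uncountable almost disjoint family $\mathcal{A}\subseteq[A]^\omega$ and observes that, since distinct members are almost disjoint and $\mu$ vanishes on finite sets, the measures $\mu(B)$ for $B\in\mathcal{A}$ behave additively; a finite measure cannot assign positive mass to uncountably many almost disjoint sets, so some $B\in\mathcal{A}$ has $\mu(B)=0$. This is exactly the trick you yourself use for (3). So the paper proves (1) via the argument you reserved for (3), and then derives both (2) and (3) from (1) in one line. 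Your route for (1) is a bit longer but entirely elementary (no need to invoke an uncountable almost disjoint family), while the paper's is shorter and unifies (1) and (3) under the same idea. Both arguments for (2) are essentially the same.
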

\begin{proof}
	(1) Let $\mu$ be a measure on $\omega$ which vanishes on points. Fix $A\in[\omega]^\omega$ and let $\mathcal{A}\subseteq[A]^\omega$ be an uncountable family of pairwise almost disjoint infinite subsets of $A$. If $B, C\in \mathcal{A}$ are distinct, then $\mu(B\cup C) = \mu(B)+\mu(C)$, and so 
	there exists $D\in\mathcal{A}$ such that $\mu(D)=0$.

	(2) and (3) follow immediately from (1).
\end{proof}

Clearly, measures on $\omega$ which vanish on points cannot be $\sigma$-additive. However, P-measures are 'almost' $\sigma$-additive as the following proposition shows. We leave the proof of this standard fact to the reader. 

\begin{prop}\label{prop:ap_cont}
	Let $\mu$ be a measure on $\omega$ vanishing on points. Then, the following are equivalent:
	\begin{enumerate}
		\item $\mu$ is a P-measure, that is, for every decreasing sequence $(A_n)$ in $\wp(\omega)$ there is a pseudointersection $A\in\wp(\omega)$ of $(A_n)$ such that $\mu(A)=\lim_{n\to\infty}\mu(A_n)$;
		\item for every increasing sequence $(A_n)$ in $\wp(\omega)$ there is a pseudounion $A\in\wp(\omega)$ of $(A_n)$ such that $\mu(A)=\lim_{n\to\infty}\mu(A_n)$;
		\item  for every antichain $(A_n)$ in $\wp(\omega)$ there is a pseudounion $A\in\wp(\omega)$ of $(A_n)$ such that $\mu(A)=\sum_{n\in\omega}\mu(A_n)$.
	\end{enumerate}
\end{prop}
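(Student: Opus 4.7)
The plan is to prove the three equivalences by standard complementation and partial-sum tricks, using only finite additivity and the finiteness of $\mu(\omega)$. Since the measure is finite, the map $A\mapsto\mu(\omega)-\mu(A)$ is well behaved and will let me pass between decreasing and increasing sequences, and the identity $\bigsqcup_{k\le n}A_k$ will let me pass between antichains and increasing sequences.

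First I would show $(1)\Leftrightarrow(2)$ by taking complements. Given a decreasing sequence $(A_n)$, the sequence $(\omega\setminus A_n)$ is increasing, and a routine check shows that $A$ is a pseudointersection of $(A_n)$ if and only if $\omega\setminus A$ is a pseudounion of $(\omega\setminus A_n)$ (the symmetric difference $A\triangle A_n$ coincides, up to nothing, with $(\omega\setminus A_n)\setminus(\omega\setminus A)$). Finite additivity together with $\mu(\omega)<\infty$ gives $\mu(\omega\setminus A)=\mu(\omega)-\mu(A)$, and the limits transform accordingly. So the witness asked for in (1) for $(A_n)$ and the witness asked for in (2) for $(\omega\setminus A_n)$ are literally complements of each other.

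Next I would show $(2)\Leftrightarrow(3)$ via partial sums. For $(3)\Rightarrow(2)$, given an increasing sequence $(A_n)$, set $B_0=A_0$ and $B_n=A_n\setminus A_{n-1}$ for $n\ge 1$; then $(B_n)$ is an antichain with $A_n=\bigsqcup_{k\le n}B_k$, so finite additivity yields $\mu(A_n)=\sum_{k\le n}\mu(B_k)$. A pseudounion $B$ of $(B_n)$ supplied by (3) is automatically a pseudounion of $(A_n)$, because $A_n=\bigsqcup_{k\le n}B_k\subseteq^* B$, and $\mu(B)=\sum_n\mu(B_n)=\lim_n\mu(A_n)$. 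Conversely, for $(2)\Rightarrow(3)$, given an antichain $(A_n)$, form the increasing sequence $C_n=\bigsqcup_{k\le n}A_k$; a pseudounion $C$ of $(C_n)$ is clearly also a pseudounion of $(A_n)$, and $\mu(C)=\lim_n\mu(C_n)=\sum_n\mu(A_n)$.

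There is no real obstacle here: the only points that require care are verifying that the pseudointersection/pseudounion property is preserved under complementation and under passing between $(A_n)$, $(C_n)$, $(B_n)$, and that the series $\sum\mu(A_n)$ in (3) converges (which it does, being bounded by $\mu(\omega)<\infty$, so the $\lim$ and $\sum$ formulations agree). Everything else is bookkeeping with finite additivity.
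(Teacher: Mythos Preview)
Your proof is correct and entirely standard; the paper in fact does not supply a proof at all, stating only ``We leave the proof of this standard fact to the reader.'' One tiny wording slip: in the $(1)\Leftrightarrow(2)$ step you write that the symmetric difference $A\triangle A_n$ coincides with $(\omega\setminus A_n)\setminus(\omega\setminus A)$, but the latter equals $A\setminus A_n$, not $A\triangle A_n$; this is exactly the set you need (finiteness of $A\setminus A_n$ is precisely $A\subseteq^* A_n$), so the argument goes through unchanged.
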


\begin{lem}\label{epsilon} Assume $\mu$ is not a P-measure. Then, there is a decreasing sequence $(A_n)$ of subsets of $\omega$ and $\varepsilon>0$ such that $\mu(A) < \lim_{n\to\infty} \mu(A_n) - \varepsilon$ for every pseudointersection $A$ of $(A_n)$. 
	
\end{lem}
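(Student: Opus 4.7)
The plan is to take a decreasing sequence $(B_n)$ of infinite subsets of $\omega$ witnessing the failure of the P-property, write $a := \lim_n \mu(B_n)$, and consider the supremum
\[ s := \sup\bigl\{\mu(B) : B \text{ is a pseudointersection of } (B_n)\bigr\}. \]
Since $B \subseteq^* B_n$ combined with the fact that $\mu$ vanishes on points forces $\mu(B) \leq \mu(B_n)$, we certainly have $s \leq a$. The whole job is to prove $s < a$; the desired sequence and $\varepsilon$ will then just be $A_n := B_n$ and $\varepsilon := (a-s)/2$, since any pseudointersection $A$ of $(A_n)$ satisfies $\mu(A) \leq s = a - 2\varepsilon < \lim_n \mu(A_n) - \varepsilon$.

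To establish $s < a$, the key step is to show that the supremum $s$ is actually attained by some pseudointersection. Pick pseudointersections $C_k$ with $\mu(C_k) \to s$ and replace each $C_k$ with $C_k \cap B_k$: since $C_k \setminus B_k$ is finite and $\mu$ vanishes on points this does not change the measure, so we may assume outright that $C_k \subseteq B_k$. Set $D := \bigcup_{k\in\omega} C_k$. For any fixed $n$,
\[ D \setminus B_n \;=\; \bigcup_{k < n}\bigl(C_k \setminus B_n\bigr) \;\cup\; \bigcup_{k \geq n}\bigl(C_k \setminus B_n\bigr), \]
where each term with $k < n$ is finite by the pseudointersection property applied to $C_k$, while each term with $k \geq n$ is empty because $C_k \subseteq B_k \subseteq B_n$. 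Hence $D \setminus B_n$ is finite, so $D$ is a pseudointersection of $(B_n)$, and $\mu(D) \geq \mu(C_k)$ for every $k$ forces $\mu(D) = s$.

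Since $\mu$ is not a P-measure via $(B_n)$, no pseudointersection reaches the value $a$; the pseudointersection $D$ just constructed reaches the value $s$, so necessarily $s < a$, and the argument concludes as indicated above. The main obstacle, and the real content of the lemma, is the attainment of the supremum $s$; this relies essentially on the monotonicity of $(B_n)$, which is what allows me to truncate each $C_k$ inside $B_k$ at no cost and then conclude that almost all $C_k$'s fall automatically inside any prescribed $B_n$, so that the single union $D$ inherits the pseudointersection property from the countably many $C_k$'s.
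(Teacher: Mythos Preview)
Your proof is correct and follows essentially the same idea as the paper's. The paper phrases the key step as interpolating an $(\omega,\omega)$-pregap (citing the classical fact that there are no $(\omega,\omega)$-gaps), whereas you carry out that interpolation explicitly by truncating $C_k$ to $C_k\cap B_k$ and taking the union; your $D$ is precisely the interpolant the paper invokes abstractly, so the two arguments are the same in substance, yours being more self-contained.
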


\begin{proof} 
Let $(A_n)$ be a decreasing sequence witnessing that $\mu$ is not a P-measure. Suppose for the contradiction that for every $k\in\omega$ there is a pseudointersection $B_k$ of $(A_n)$ such that $\mu(B_k)\geq \lim_{n\to\infty} \mu(A_n) - 1/k$. Then the sequence $(B_0 \cup \dots \cup B_n, A_n)$ forms an $(\omega,\omega)$-pregap. Since there
	is no $(\omega,\omega)$-gap (see e.g. \cite{Scheepers}), it can be interpolated, i.e. there is a set $B\subseteq \omega$ such that $B_n \subseteq^* B \subseteq^* A_n$ for each $n\in\omega$. Clearly, $B$ is a pseudointersection of $(A_n)$ and $\mu(B) =
	\lim_{n\to\infty} \mu(A_n)$, a contradiction.
\end{proof}

Recall that a finitely additive measure $\mu$ on a Boolean algebra $\mathbb{A}$ is \emph{non-atomic} if for each $\varepsilon>0$ there is a finite partition of the unit element $1_\mathbb{A}$ into elements of $\mu$-measure $\le\varepsilon$ (note
that in some sources such measures are called 'strongly non-atomic', see e.g.  \cite{RaoRao}). Note that a measure $\mu$ on $\mathbb{A}$ is non-atomic if and only if its Radon extension $\check{\mu}$ onto the Stone space $St(\mathbb{A})$ vanishes on
points (that is, $\check{\mu}(\{x\})=0$ for every $x\in St(\mathbb{A})$). A classical theorem of Sierpi\'nski \cite{Sie22} asserts that every non-atomic $\sigma$-additive measure $\mu$ on a $\sigma$-complete Boolean algebra $\mathbb{A}$ attains every value
from the interval $[0,\mu(1_\mathbb{A})]$ (or, more precisely, for every $A\in\mathbb{A}$ and $\alpha\in[0,\mu(A)]$ there is $B\subseteq A$ in $\mathbb{A}$ such that $\mu(B)=\alpha$). It appears however that the proof of Sierpi\'nski's theorem
presented in  \cite[Theorem 5.1.6]{RaoRao} also works \emph{mutatis mutandis} for P-measures on $\omega$.

\begin{cor}\label{nonatomic_ap}
Let $\mu$ be a non-atomic P-measure on $\omega$. Then, for every $A\in\wp(\omega)$ and $\alpha\in[0,\mu(A)]$ there is $B\subseteq A$ such that $\mu(B)=\alpha$. 
\end{cor}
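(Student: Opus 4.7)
The plan is to mimic the standard proof of Sierpiński's theorem for $\sigma$-additive measures, substituting the P-measure approximation property (Proposition \ref{prop:ap_cont}(2)) for $\sigma$-additivity. I would fix $A \in \wp(\omega)$ and $\alpha \in [0, \mu(A)]$, introduce
\[ \mathcal{F} = \{B \subseteq A : \mu(B) \leq \alpha\}, \qquad s = \sup\{\mu(B) : B \in \mathcal{F}\}, \]
and aim to show that $s = \alpha$ and that this supremum is actually attained by some $B \subseteq A$.

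The first step is to choose a sequence $(B_n) \subseteq \mathcal{F}$ with $\mu(B_n) \to s$. Replacing $B_n$ by $B_0 \cup \dots \cup B_n$ (still a subset of $A$ with measure at most $\alpha$, by monotonicity of $\mu$) makes the sequence increasing without decreasing the limit. Applying Proposition \ref{prop:ap_cont}(2), I obtain a pseudounion $B'$ of $(B_n)$ with $\mu(B') = \lim_n \mu(B_n) = s$. Setting $B := B' \cap A$ yields a genuine subset of $A$; because $B_n \subseteq A$ and $B_n \subseteq^* B'$, we get $B_n \subseteq^* B$, and since $\mu$ vanishes on points this implies $\mu(B) \geq \mu(B_n)$ for each $n$, hence $\mu(B) \geq s$. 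Combined with $\mu(B) \leq \mu(B') = s$ this gives $\mu(B) = s$.

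The second step is to rule out $s < \alpha$ by a non-atomicity argument. If $s < \alpha$, then $\mu(A \setminus B) \geq \alpha - s > 0$. Non-atomicity of $\mu$ yields a finite partition of $\omega$ into pieces of measure at most $\alpha - s$; restricting this partition to $A \setminus B$ produces a finite partition of $A \setminus B$, and by finite additivity at least one cell $C$ satisfies $0 < \mu(C) \leq \alpha - s$. Then $B \cup C \subseteq A$ with $\mu(B \cup C) = s + \mu(C) \in (s, \alpha]$, contradicting the definition of $s$. Hence $s = \alpha$ and $B$ is the set we sought.

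The step I expect to be the main obstacle, and the reason the argument is not entirely automatic from the $\sigma$-additive case, is the passage from the pseudounion $B'$ (which lives in $\wp(\omega)$ but need not be contained in $A$) to a genuine subset of $A$ with the same measure. The fix $B := B' \cap A$ works precisely because each $B_n$ already lies in $A$ and $\mu$ vanishes on points, so the finite discrepancies between $B_n$ and $B'$ disappear after intersecting with $A$; without exploiting that $\mu$ vanishes on points, one would only retrieve a pseudo-inclusion rather than the exact-subset conclusion demanded by the corollary.
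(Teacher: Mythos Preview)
Your overall strategy---adapt Sierpi\'nski's proof, replacing $\sigma$-additivity by the P-measure property---is exactly what the paper does (it gives no explicit argument, merely refers to \cite[Theorem 5.1.6]{RaoRao} and says that proof goes through \emph{mutatis mutandis}). Your non-atomicity step and your treatment of the pseudounion (intersecting $B'$ with $A$ and using that $\mu$ vanishes on points) are both fine.

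There is, however, a real gap in the first step. You claim that replacing $B_n$ by $B_0\cup\dots\cup B_n$ yields a set ``with measure at most $\alpha$, by monotonicity of $\mu$''. Monotonicity gives the \emph{opposite} inequality, and in general the union need not stay in $\mathcal{F}$: if $B_0,B_1\subseteq A$ are disjoint with measures close to $\alpha$, then $\mu(B_0\cup B_1)$ is close to $2\alpha$. Hence $\lim_n\mu(B_0\cup\dots\cup B_n)$ may exceed $s$, and the set $B$ you produce may have $\mu(B)>\alpha$. The standard remedy is to build the increasing sequence greedily so that it stays in $\mathcal{F}$ by construction: set $B_0=\emptyset$; given $B_n\in\mathcal{F}$, let $s_n=\sup\{\mu(C):C\subseteq A\setminus B_n,\ \mu(B_n)+\mu(C)\le\alpha\}$, choose such a $C_n$ with $\mu(C_n)>s_n-2^{-n}$, and put $B_{n+1}=B_n\cup C_n$. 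Your pseudounion argument then gives $B\subseteq A$ with $\mu(B)=\lim_n\mu(B_n)\le\alpha$, and since $s_n\to 0$, your non-atomicity step (using $C\setminus B_n$ as a competitor for $s_n$) rules out $\mu(B)<\alpha$.
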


The next proposition shows that atoms of P-measures give rise to P-points in $\omega^*$.

\begin{prop}\label{prop:pmeas_supp_pp}
Let $\mu$ be a P-measure on $\omega$. If there is $x\in\omega^*$ such that $\check{\mu}(\{x\})>0$, then $x$ is a P-point.
\end{prop}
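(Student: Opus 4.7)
The plan is to argue by contradiction, after extracting a clean reformulation of the hypothesis $\check{\mu}(\{x\})>0$. Let $\mathcal{U}=\{A\subseteq\omega:x\in[A]\}$ be the ultrafilter on $\omega$ corresponding to $x$ and set $\alpha=\check{\mu}(\{x\})>0$. Since the clopen sets $[A]$ with $A\in\mathcal{U}$ form a neighborhood base of $x$ in $\beta\omega$ and $\check{\mu}$ is a Radon (hence outer regular) measure, I would first observe that
\[
\alpha \;=\; \inf\bigl\{\check{\mu}([A]):A\in\mathcal{U}\bigr\} \;=\; \inf\bigl\{\mu(A):A\in\mathcal{U}\bigr\}.
\]
In particular $\mu(A)\geq\alpha$ for every $A\in\mathcal{U}$; this uniform lower bound is the driver of the whole argument.

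Now I would assume, for contradiction, that $x$ is not a P-point in $\omega^*$. Translating this back to $\mathcal{U}$, there exists a decreasing sequence $(A_n)$ of elements of $\mathcal{U}$ that admits no pseudointersection belonging to $\mathcal{U}$. Apply the P-measure property of $\mu$ (via Proposition \ref{prop:ap_cont}(1)) to obtain a pseudointersection $B$ of $(A_n)$ with $\mu(B)=\lim_{n\to\infty}\mu(A_n)$. By the choice of $(A_n)$ we must have $B\notin\mathcal{U}$, and hence $\omega\setminus B\in\mathcal{U}$.

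The contradiction then comes from the sequence $C_n:=A_n\setminus B=A_n\cap(\omega\setminus B)$. Each $C_n$ lies in $\mathcal{U}$ (as the intersection of two members of $\mathcal{U}$), so $\mu(C_n)\geq\alpha$ for every $n$. On the other hand, $B\subseteq^*A_n$ yields $A_n\cap B=^* B$, so
\[
\mu(A_n) \;=\; \mu(C_n)+\mu(A_n\cap B) \;=\; \mu(C_n)+\mu(B),
\]
which gives $\mu(C_n)=\mu(A_n)-\mu(B)\to 0$ as $n\to\infty$, contradicting the lower bound $\mu(C_n)\geq\alpha>0$.

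I don't foresee a genuine obstacle in this proof; the only step requiring a moment's thought is the initial identification $\alpha=\inf_{A\in\mathcal{U}}\mu(A)$ (so that membership in $\mathcal{U}$ imposes a uniform positive lower bound on $\mu$), and then the neat trick of feeding the pseudointersection $B$ produced by the P-measure property back into the sequence via $C_n=A_n\setminus B$. Everything else is bookkeeping with almost-inclusions and finite additivity.
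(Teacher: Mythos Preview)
Your proof is correct. Both your argument and the paper's rest on the same key observation---that the atom mass $\alpha=\check{\mu}(\{x\})$ gives a uniform lower bound $\mu(A)\ge\alpha$ for every $A\in\mathcal{U}$---but they package the conclusion differently. The paper argues directly: it first fixes a single $A\in\mathcal{U}$ with $\check{\mu}([A]\setminus\{x\})<\alpha$, applies the P-measure property to $(A_n\cap A)$, and shows the resulting pseudointersection $B$ must lie in $\mathcal{U}$ (otherwise $[B]\subseteq[A]\setminus\{x\}$ would force $\mu(B)<\alpha$). You instead argue by contradiction and, having obtained $B\notin\mathcal{U}$, feed it back via $C_n=A_n\setminus B$; the computation $\mu(C_n)=\mu(A_n)-\mu(B)\to 0$ then clashes with $\mu(C_n)\ge\alpha$. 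Your route is arguably a bit cleaner, since it avoids introducing the auxiliary set $A$, while the paper's version has the virtue of being constructive (it exhibits the pseudointersection directly).
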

\begin{proof}
Assume that $x\in\omega^*$ is such that $\delta=\check{\mu}(\{x\})>0$. There exists $A\in\wp(\omega)$ such that $x\in[A]$ and $\check{\mu}([A]\setminus\{x\})<\delta$. Let $(A_n)$ be a sequence of elements of $x$ decreasing to $\emptyset$. Since $\mu$ is a P-measure, there is $B\in\wp(\omega)$ such that $B\subseteq^* A_n\cap A$ for every $n\in\omega$ and $\mu(B)=\lim_{n\to\infty}\mu(A_n\cap A)$. As $x$ is an element of each clopen $[A_n\cap A]$, we have $\mu(B)\ge\delta$. It follows that $x\in[B]$. Indeed, if $x\not\in[B]$, then, since $B\subseteq^*A$, it holds $\delta\le\mu(B)=\check{\mu}([B])\le\check{\mu}([A]\setminus\{x\})<\delta$, a contradiction. So, $B\in x$ and $B\subseteq^* A_n$ for each $n\in\omega$. It follows that $x$ is a P-point.
\end{proof}

\begin{cor}
If there is no P-point, then every P-measure is non-atomic.
\end{cor}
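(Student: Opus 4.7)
The plan is to argue by contrapositive: assuming that some P-measure $\mu$ on $\omega$ fails to be non-atomic, I will produce a P-point in $\omega^*$.

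First I would invoke the characterization of non-atomicity mentioned in the paragraph preceding Corollary \ref{nonatomic_ap}: a measure on a Boolean algebra is non-atomic if and only if its Radon extension to the Stone space vanishes on points. Applied to $\mu$ on $\wp(\omega)$, whose Stone space is $\beta\omega$, the failure of non-atomicity yields some $x\in\beta\omega$ with $\check{\mu}(\{x\})>0$.

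Next I would observe that $x$ cannot belong to $\omega$. Indeed, since $\mu$ is a P-measure, it vanishes on points of $\omega$ by definition, so $\check{\mu}(\{n\})=\mu(\{n\})=0$ for every $n\in\omega$. Therefore $x\in\omega^*$.

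Finally, I would apply Proposition \ref{prop:pmeas_supp_pp} directly to $\mu$ and $x$: this gives that $x$ is a P-point, contradicting the hypothesis that no P-point exists. There is no real obstacle here; the statement is essentially an immediate corollary of Proposition \ref{prop:pmeas_supp_pp} together with the non-atomicity/atoms-in-the-Stone-space dictionary, so the only care needed is to note that vanishing on points of $\omega$ (built into the P-measure definition) forces any atom of $\check{\mu}$ to live in $\omega^*$, where Proposition \ref{prop:pmeas_supp_pp} applies.
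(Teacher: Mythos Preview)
Your proposal is correct and is precisely the argument the paper has in mind: the corollary is stated immediately after Proposition~\ref{prop:pmeas_supp_pp} without a separate proof, as it follows at once from that proposition together with the non-atomicity criterion recalled before Corollary~\ref{nonatomic_ap}. Your observation that any atom of $\check{\mu}$ must lie in $\omega^*$ (since a P-measure vanishes on points of $\omega$) is exactly the small remark needed to make the deduction complete.
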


The following fact, which we will need later, is folklore, but we include its proof for the sake of self-containment of the paper.

\begin{prop}\label{Ppoint-measure} Suppose that $\mu$ is a non-atomic measure on $\beta\omega$. If $\mathcal{U}$ is a P-point on $\omega$, then $\mathcal{U}$ does not belong to the support of $\mu$.
\end{prop}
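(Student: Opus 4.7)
I would argue by contradiction, aiming to produce an $A\in\mathcal{U}$ with $\check\mu([A])=0$, which contradicts $\mathcal{U}\in\supp(\mu)$. So assume $\mathcal{U}\in\supp(\mu)$, meaning every clopen $[A]$ with $A\in\mathcal{U}$ satisfies $\mu([A])>0$. (Here I tacitly use that in $\beta\omega$ the basic clopens $[A]$ with $A\in\mathcal{U}$ form a neighborhood base at $\mathcal{U}$.)

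The first step is to build, using non-atomicity, a witnessing sequence in $\mathcal{U}$ of small $\mu$-measure. Since $\mu$ is non-atomic we have $\mu(\{\mathcal{U}\})=0$, so by outer regularity of the Radon measure $\mu$ there exists, for each $n\in\omega$, an open set $V_n\ni\mathcal{U}$ with $\mu(V_n)<1/n$. Using the clopen base at $\mathcal{U}$, pick $A_n\in\mathcal{U}$ with $[A_n]\subseteq V_n$; then $\mu([A_n])<1/n$.

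Now I would invoke the P-point property of $\mathcal{U}$ applied to the countable family $\{A_n : n\in\omega\}\subseteq\mathcal{U}$: there is $A\in\mathcal{U}$ such that $A\subseteq^* A_n$ for every $n$. For each $n$ the set $A\setminus A_n$ is a finite subset of $\omega$, so $[A]\setminus[A_n]\subseteq A\setminus A_n$ is a finite subset of $\beta\omega$ lying inside $\omega$. Non-atomicity of $\mu$ gives $\mu(\{k\})=0$ for every $k\in\omega$ (viewed as a point of $\beta\omega$), hence $\mu([A]\setminus[A_n])=0$ and therefore
\[
\mu([A])=\mu([A]\cap[A_n])\le\mu([A_n])<\frac{1}{n}
\]
for each $n$. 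Letting $n\to\infty$ gives $\mu([A])=0$, contradicting $\mathcal{U}\in\supp(\mu)$ since $A\in\mathcal{U}$.

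The only delicate point is recognising that non-atomicity of $\mu$ on $\beta\omega$ must be used twice and in two different guises: once at the non-isolated point $\mathcal{U}$ (to produce the small neighborhoods $V_n$ via outer regularity), and once at the isolated integers (to discard the finite error sets coming from the almost-inclusion $A\subseteq^* A_n$). Everything else is just translating the combinatorial P-point condition into the topological statement that $\mathcal{U}$ has a clopen $\mathbb{G}_\delta$-like neighborhood of arbitrarily small $\mu$-measure.
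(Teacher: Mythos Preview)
Your proof is correct. The overall structure matches the paper's: produce sets $A_n\in\mathcal{U}$ with $\mu([A_n])\to 0$, apply the P-point property to get a pseudointersection $A\in\mathcal{U}$, and then use that $\mu$ vanishes on the finitely many isolated points in $[A]\setminus[A_n]$ to conclude $\mu([A])=0$.

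The one genuine difference is in how the shrinking sequence is obtained. The paper works combinatorially, building a dyadic tree $\{N_\tau:\tau\in 2^{<\omega}\}$ of positive-measure sets with $N_\tau=N_{\tau^\smallfrown 0}\sqcup N_{\tau^\smallfrown 1}$ and measures tending to zero along each branch; the ultrafilter selects a branch $(\tau_n)$, and the sets $N_{\tau_n}$ play the role of your $A_n$. You instead go through outer regularity of the Radon measure at the single point $\mathcal{U}$: since $\mu(\{\mathcal{U}\})=0$, small open (hence small basic clopen) neighbourhoods exist directly. Your route is shorter and more measure-theoretic; the paper's tree argument is closer to the Boolean-algebraic definition of non-atomicity used elsewhere in the paper and does not invoke regularity explicitly. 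Both are entirely valid.
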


\begin{proof} The non-atomicity of $\mu$ implies that there is a family 
	$\{N_\tau\colon \tau\in 2^{<\omega}\} \subseteq \mathcal{P}(\omega)$ such that 
	\begin{itemize}
		\item $\mu([N_\tau])>0$ for each $\tau\in 2^{<\omega}$, 
		\item $N_\tau$ is a disjoint union of  $N_{\tau^\smallfrown 0}$ and  $N_{\tau ^\smallfrown 1}$ for each $\tau \in 2^{<\omega}$, and
		\item $\lim_{n\to \infty} \mu([N_{s \restriction n}]) = 0$ for each $s\in 2^\omega$.
	\end{itemize}
	For each $n\in\omega$ there is $\tau_n \in 2^n$ such that $N_{\tau_n} \in \mathcal{U}$. Clearly, $\{\tau_n\colon n\in \omega\}$ forms a branch in the tree $2^{<\omega}$. If $\mathcal{U}$ is a
	P-point, then there is $N \in \mathcal{U}$ such that $N \subseteq^* N_{\tau_n}$ for each $n\in\omega$. Since $\mu$ is non-atomic, we have $\mu([N])=0$ and so the clopen $[N]$ witnesses that $\mathcal{U}$ lies outside the support of $\mu$.
\end{proof}

\section{Measures on $\omega$ in random extensions}

Fix a cardinal number $\kappa\geq \omega$. Let $\mathrm{Bor}(2^\kappa)$ denote the $\sigma$-field of all Borel subsets of the space $2^\kappa$. By $\lambda_\kappa$ denote the standard product measure on $2^\kappa$ and set $\mathcal{N}_\kappa = \{B\in \mathrm{Bor}(2^\kappa)\colon \lambda_\kappa(B) = 0\}$.
Let $\mathbb{M}_\kappa$ be the measure algebra of type $\kappa$, i.e. $\mathbb{M}_\kappa = \mathrm{Bor}(2^\kappa)/{\mathcal{N}_\kappa}$ with the standard Boolean operations $\wedge$, $\vee$, $^c$, zero $0$ and unit $1$ elements, and ordering $\le$.
The measure $\lambda_\kappa$ induces in the natural way the measure on $\mathbb{M}_\kappa$ which we will also denote by $\lambda_\kappa$. 

By $V$ we denote the set-theoretic universe. Speaking about \emph{forcing with $\mathbb{M}_\kappa$}, we formally mean the partial ordering $(\mathbb{M}_\kappa\setminus\{0\},\le)$---this abuse of notation should not lead to any confusion. When forcing with $\mathbb{M}_\kappa$ (i.e. \emph{adding $\kappa$ random reals}), if $G$ is a generic filter over $V$, then for every $\mathbb{M}_\kappa$-name $\sigma$ by $\sigma_G$ we denote its interpretation through $G$ in the extended model $V[G]$. Note that $\mathbb{M}_\kappa$ is forcing equivalent to the poset $(\mathrm{Bor}(2^\kappa)\setminus \mathcal{N}_\kappa, \subseteq)$.

\subsection{Names for subsets of $\omega$}\label{sec:names-subsets}

In this section we will present a handy way of dealing with names for subsets of $\omega$ in random extensions.

Consider the product Boolean algebra $\mathbb{M}_\kappa^\omega$ with the Boolean operations defined coordinate-wise, i.e. for every $M,N\in\mathbb{M}_\kappa^\omega$ and $k\in\omega$ we set:
\begin{itemize}
	\item $(M \wedge N)(k) = M(k) \wedge N(k)$,
	\item $(M \vee N)(k) = M(k) \vee N(k)$,
	\item $M^c(k) = M(k)^c$.
\end{itemize}
As above, the zero and unit elements of $\mathbb{M}_\kappa^\omega$ will be denoted by $0$ and $1$, respectively. On $\mathbb{M}_\kappa^\omega$ we consider two orderings: the standard one $\le$ originating from $\mathbb{M}_\kappa$, and the 'almost' one $\le^*$, defined for every $M,N\in\mathbb{M}_\kappa^\omega$ as follows: $M\le^*N$ if $M(k)\le N(k)$ for almost all $k\in\omega$.

For $M\in \mathbb{M}_\kappa^\omega$ define an $\mathbb{M}_\kappa$-name $\dot{M}$ by the formula:
\[\dot{M}=\{\langle k, M(k)\rangle\colon k\in \omega\}.\]
Clearly, $\dot{M}$ is \emph{an $\mathbb{M}_\kappa$-name for a subset of $\omega$}, that is, $\Vdash_{\mathbb{M}_\kappa}\dot{M}\subseteq\omega$. The proof of the next lemma is left to the reader.

\begin{lem}\label{homodot} For each $M,N\in\mathbb{M}_\kappa^\omega$ we have $\Vdash_{\mathbb{M}_\kappa}\dot{M}\cap\dot{N}=(M\wedge N)\dot{\ }$, and $\Vdash_{\mathbb{M}_\kappa}\dot{M}^c=(M^c)\dot{\ }$.
\end{lem}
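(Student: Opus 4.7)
The plan is to verify both identities pointwise in $\omega$ by unwinding the definition of the canonical name $\dot M$ and then invoking the basic properties of a generic filter on a Boolean algebra. So first I would make explicit that, for any $\mathbb{M}_\kappa$-generic $G$ over $V$, the definition $\dot M = \{\langle k, M(k)\rangle : k\in\omega\}$ gives
\[ (\dot M)_G = \{k\in\omega : M(k)\in G\}, \]
and analogously for $\dot N$, $(M\wedge N)\dot{\ }$, and $(M^c)\dot{\ }$. Since a $\Vdash_{\mathbb{M}_\kappa}$-assertion about subsets of $\omega$ is equivalent to the corresponding set-theoretic equality holding in every generic extension, it is enough to show $(M\wedge N)\dot{\ }_G=\dot M_G\cap\dot N_G$ and $(M^c)\dot{\ }_G=\omega\setminus\dot M_G$ for an arbitrary such $G$.

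For the first equality, I would fix $k\in\omega$ and observe that $k\in (M\wedge N)\dot{\ }_G$ iff $(M\wedge N)(k)=M(k)\wedge N(k)\in G$; by the filter property of $G$ this is equivalent to $M(k)\in G$ and $N(k)\in G$, i.e., to $k\in\dot M_G\cap\dot N_G$. For the second equality the same idea works, but with one additional ingredient: the pair $\{M(k),M(k)^c\}$ is a two-element maximal antichain in $\mathbb{M}_\kappa$ (they are incompatible, their meet being $0$, and every nonzero element meets one of them since their join is $1$), so the generic filter $G$ contains exactly one member of this antichain. Hence $M(k)^c\in G$ iff $M(k)\notin G$, i.e., $k\in (M^c)\dot{\ }_G$ iff $k\notin\dot M_G$, completing the argument.

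The proof is essentially a direct unpacking of the definition of $\dot M$, so I do not anticipate a genuine obstacle; the only point that requires more than pure rewriting is the complement clause, where the argument depends on the \emph{ultrafilter-on-$\mathbb{M}_\kappa$} property of the generic $G$ rather than merely on its being a filter. Everything else is formal.
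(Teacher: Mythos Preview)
Your proof is correct and is precisely the straightforward verification the paper has in mind; in fact the paper does not give a proof at all, stating only that ``the proof of the next lemma is left to the reader.'' Your unpacking of $(\dot M)_G=\{k:M(k)\in G\}$ together with the filter and genericity (ultrafilter) properties of $G$ is the natural way to fill this gap.
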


\begin{lem}\label{names_conditions}
For every $M\in\mathbb{M}_\kappa^\omega$ and every $k\in\omega$ we have $M(k)=\llbracket k\in\dot{M}\rrbracket$.
\end{lem}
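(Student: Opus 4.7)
The plan is to unfold the standard recursive definition of the Boolean value of a formula for the particularly simple name $\dot{M}$. The point is that $\dot{M}=\{\langle k,M(k)\rangle\colon k\in\omega\}$ (with $k$ read as the canonical name $\check{k}$ for the natural number) contains, for each $k\in\omega$, exactly one ordered pair whose first coordinate evaluates to $k$, namely $\langle\check{k},M(k)\rangle$. So the sum defining $\llbracket k\in\dot{M}\rrbracket$ collapses to a single term.

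First I would recall the general formula
\[
\llbracket \check{n}\in\tau\rrbracket \;=\; \bigvee_{\langle\sigma,p\rangle\in\tau}\bigl(p\wedge\llbracket\check{n}=\sigma\rrbracket\bigr),
\]
valid for every $\mathbb{M}_\kappa$-name $\tau$ and every $n\in\omega$, together with the standard fact that for canonical names of natural numbers one has $\llbracket\check{n}=\check{m}\rrbracket=1$ if $n=m$ and $0$ otherwise (a routine induction on $\max(n,m)$). Applying this to $\tau=\dot{M}$ gives
\[
\llbracket k\in\dot{M}\rrbracket \;=\; \bigvee_{j\in\omega}\bigl(M(j)\wedge\llbracket\check{k}=\check{j}\rrbracket\bigr) \;=\; M(k),
\]
since every term with $j\neq k$ is $0$.

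If one prefers to avoid Boolean values and argue via the forcing relation, the same conclusion follows in two short steps. For the direction $M(k)\le\llbracket k\in\dot{M}\rrbracket$, observe that the pair $\langle\check{k},M(k)\rangle$ is literally an element of $\dot{M}$, so any condition below $M(k)$ forces $\check{k}\in\dot{M}$. For the reverse inequality, suppose $p\not\le M(k)$; then $q=p\wedge M(k)^c>0$, and since $\langle\check{k},M(k)\rangle$ is the only pair in $\dot{M}$ whose first coordinate names $k$, the condition $q$ is incompatible with the unique witness for $k\in\dot{M}$ and hence forces $\check{k}\notin\dot{M}$. Thus $\llbracket k\in\dot{M}\rrbracket\le M(k)$. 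The only real subtlety—hardly an obstacle—is the notational convention of reading the integer $k$ appearing as the first coordinate of $\langle k,M(k)\rangle$ as its canonical name $\check{k}$; once this is made explicit the lemma is immediate.
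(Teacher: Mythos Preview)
Your proof is correct. Both the Boolean-value computation and the forcing-relation argument you give are valid and complete.

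The paper takes a mildly different route for the reverse inequality $\llbracket k\in\dot{M}\rrbracket\le M(k)$. Instead of appealing directly to the fact that $\langle\check{k},M(k)\rangle$ is the only pair in $\dot{M}$ whose first coordinate names $k$, the paper applies the previously stated Lemma~\ref{homodot} (that $\Vdash_{\mathbb{M}_\kappa}(M^c)\dot{\ }=\dot{M}^c$) to the complement: from $M(k)^c\Vdash k\in(M^c)\dot{\ }$ one gets $M(k)^c\le\llbracket k\in(M^c)\dot{\ }\rrbracket=\llbracket k\notin\dot{M}\rrbracket=\llbracket k\in\dot{M}\rrbracket^c$. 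Your direct unfolding of the Boolean value is more self-contained (it does not invoke Lemma~\ref{homodot}) and arguably cleaner; the paper's approach has the small advantage of using only the easy direction ``$M(k)\Vdash k\in\dot{M}$'' twice, once for $M$ and once for $M^c$, together with the already-recorded fact about complements.
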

\begin{proof}
Fix $M\in\mathbb{M}_\kappa^\omega$ and $k\in\omega$. If $M(k)\in\{0,1\}$, then we are trivially done, so assume the opposite. Since $M(k)\Vdash k\in\dot{M}$, $M(k)\le \llbracket k\in\dot{M}\rrbracket$. Similarly, by Lemma \ref{homodot}, $M(k)^c\le\llbracket k\in(M^c)\dot{\ }\rrbracket=\llbracket k\in\dot{M}^c\rrbracket=\llbracket k\not\in\dot{M}\rrbracket=\llbracket k\in\dot{M}\rrbracket^c$. It follows thus that $M(k)=\llbracket k\in\dot{M}\rrbracket$.
\end{proof}

We will use Lemma \ref{names_conditions} frequently without direct referring to it.

\begin{cor}\label{homodotgm}
For every $M,N\in\mathbb{M}_\kappa^\omega$ it holds:
\begin{enumerate}
	\item if $M\le N$, then $\Vdash_{\mathbb{M}_\kappa}\dot{M}\subseteq\dot{N}$;
	\item if $M\wedge N=0$, then $\Vdash_{\mathbb{M}_\kappa}\dot{M}\cap\dot{N}=\emptyset$. 
\end{enumerate}
\end{cor}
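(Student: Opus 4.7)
The plan is to reduce both parts to Lemma \ref{homodot} together with the observation that the constant-zero element $0\in\mathbb{M}_\kappa^\omega$ (the function $k\mapsto 0$) produces a name $\dot{0}$ which is forced to be the empty subset of $\omega$. This last fact follows directly from Lemma \ref{names_conditions}: since $\llbracket k\in\dot{0}\rrbracket = 0(k) = 0$ for every $k\in\omega$, no nonzero condition forces any integer into $\dot{0}$, so $\Vdash_{\mathbb{M}_\kappa}\dot{0}=\emptyset$.

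For part (1), I would rewrite the hypothesis $M\le N$ as the identity $M\wedge N = M$, which is the usual characterization of the Boolean order on the coordinate-wise algebra $\mathbb{M}_\kappa^\omega$. Lemma \ref{homodot} then gives
\[
\Vdash_{\mathbb{M}_\kappa}\ \dot{M}\cap\dot{N} = (M\wedge N)\dot{\ } = \dot{M},
\]
which is precisely the statement $\Vdash_{\mathbb{M}_\kappa}\dot{M}\subseteq\dot{N}$.

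For part (2), I would simply apply Lemma \ref{homodot} to the hypothesis $M\wedge N = 0$, obtaining $\Vdash_{\mathbb{M}_\kappa}\dot{M}\cap\dot{N} = \dot{0}$, and then invoke the preliminary observation $\Vdash_{\mathbb{M}_\kappa}\dot{0}=\emptyset$ to conclude.

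Since the corollary is just a bookkeeping step on top of the two preceding lemmas, I do not expect any genuine obstacle. The only mildly nontrivial point is verifying that $\dot{0}$ is forced to be empty, but this is immediate from Lemma \ref{names_conditions}; everything else is a one-line Boolean computation.
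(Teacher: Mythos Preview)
Your proof is correct and is exactly the argument the paper has in mind: the corollary is stated without proof, immediately after Lemmas~\ref{homodot} and~\ref{names_conditions}, as an obvious consequence of those two results. Your reduction of (1) to $M\wedge N=M$ and of (2) to $\Vdash\dot{0}=\emptyset$ via Lemma~\ref{names_conditions} is precisely how one fills in the omitted details.
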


Actually, it appears that each subset of $\omega$ in $V^{\mathbb{M}_\kappa}$ can be described in the above way.

\begin{lem}\label{names} For each $\mathbb{M}_\kappa$-name $\dot{N}$ there is $M\in \mathbb{M}_\kappa^\omega$ such that
\[\Vdash_{\mathbb{M}_\kappa} (\dot{N}\subseteq\omega\ \Rightarrow\ \dot{N} = \dot{M}).\]
\end{lem}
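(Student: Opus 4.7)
The natural candidate is to read off $M$ from the Boolean values of the atomic formulas $k\in\dot{N}$. Since $\mathbb{M}_\kappa$ is a complete Boolean algebra (being the measure algebra of $2^\kappa$), for every $\mathbb{M}_\kappa$-name $\dot{N}$ and every $k\in\omega$ the Boolean value $\llbracket k\in\dot{N}\rrbracket$ exists in $\mathbb{M}_\kappa$. So I would simply define $M\in\mathbb{M}_\kappa^\omega$ by
\[
M(k)=\llbracket k\in\dot{N}\rrbracket, \qquad k\in\omega.
\]

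Next I would verify that the interpretations of $\dot{M}$ and $\dot{N}$ agree on $\omega$ in every generic extension. By Lemma \ref{names_conditions} applied to this $M$, we already have $M(k)=\llbracket k\in\dot{M}\rrbracket$ for every $k$. Fix an $\mathbb{M}_\kappa$-generic filter $G$ over $V$; then for each $k\in\omega$ the conditions $k\in\dot{M}_G$, $M(k)\in G$, $\llbracket k\in\dot{N}\rrbracket\in G$, and $k\in\dot{N}_G$ are pairwise equivalent. Consequently $\dot{M}_G=\dot{N}_G\cap\omega$ in $V[G]$.

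From this the required forced statement follows at once: whenever $\dot{N}_G\subseteq\omega$ in $V[G]$, we have $\dot{N}_G\cap\omega=\dot{N}_G$, so $\dot{M}_G=\dot{N}_G$. By the forcing theorem this gives $\Vdash_{\mathbb{M}_\kappa}(\dot{N}\subseteq\omega\Rightarrow\dot{N}=\dot{M})$, as required.

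I do not anticipate a genuine obstacle here; the lemma is essentially the statement that membership of $k$ in a name for a subset of $\omega$ is decided atomically, and the only ingredient beyond elementary forcing bookkeeping is the completeness of $\mathbb{M}_\kappa$ used to define the Boolean values $\llbracket k\in\dot{N}\rrbracket$ as honest elements of $\mathbb{M}_\kappa^\omega$.
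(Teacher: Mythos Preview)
Your proposal is correct and follows exactly the same approach as the paper: define $M(k)=\llbracket k\in\dot{N}\rrbracket$ for each $k\in\omega$. The paper's proof is in fact the single line ``For $k\in\omega$ let $M(k)=\llbracket k\in\dot{N}\rrbracket$,'' so your write-up simply spells out the routine verification in more detail.
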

\begin{proof}
For $k\in \omega$ let $M(k) = \llbracket k\in \dot{N} \rrbracket$. 
\end{proof}

So, in what follows, we will assume that all names for subsets of $\omega$ are basically elements of $\mathbb{M}_\kappa^\omega$. In particular, when writing that \emph{$\dot{N}$ is an $\mathbb{M}_\kappa$-name for a subset of $\omega$}, we will assume that $\dot{N}$ comes from the element $N\in\mathbb{M}_\kappa^\omega$ constructed as in the proof of the above lemma. 

\begin{lem}\label{almost} For every $M, N\in \mathbb{M}_\kappa^\omega$ we have:
\begin{itemize}
	\item $\bigwedge_{l\in\omega} \bigvee_{k>l} M(k) = 0$ if and only if $\Vdash_{\mathbb{M}_\kappa}$``$\dot{M}$ is finite'';
	\item $\bigwedge_{l\in\omega} \bigvee_{k>l} (M(k)\setminus N(k)) = 0$ if and only if $\Vdash_{\mathbb{M}_\kappa} \dot{M} \subseteq^* \dot{N}$;
	\item if $M\le^* N$, then $\Vdash_{\mathbb{M}_\kappa} \dot{M} \subseteq^* \dot{N}$.
\end{itemize}
\end{lem}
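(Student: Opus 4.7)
The plan is to exploit the completeness of the measure algebra $\mathbb{M}_\kappa$ together with Lemma \ref{names_conditions}, which identifies $M(k)$ with $\llbracket k\in \dot M\rrbracket$, and then deduce the second and third bullets from the first.

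For the first bullet, I would compute the Boolean value of ``$\dot M$ is infinite'' directly. Since being infinite means $\forall l\in\omega\,\exists k>l\,(k\in \dot M)$, and $\mathbb{M}_\kappa$ is a complete Boolean algebra, standard Boolean-valued semantics give
\[
\llbracket \dot M \text{ is infinite}\rrbracket \;=\; \bigwedge_{l\in\omega}\bigvee_{k>l}\llbracket k\in\dot M\rrbracket \;=\; \bigwedge_{l\in\omega}\bigvee_{k>l} M(k).
\]
Hence ``$\dot M$ is finite'' is forced exactly when the right-hand side equals $0$, giving the first equivalence.

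For the second bullet, I would note that $\dot M\subseteq^*\dot N$ iff $\dot M\setminus \dot N$ is finite; by Lemma \ref{homodot}, $\dot M\setminus\dot N = (M\wedge N^c)\dot{\ }$, and coordinate-wise $(M\wedge N^c)(k)=M(k)\setminus N(k)$. Applying the first bullet to the name $(M\wedge N^c)\dot{\ }$ then yields exactly the desired equivalence. For the third bullet, if $M\le^* N$ then there is $l_0$ with $M(k)\setminus N(k)=0$ for all $k>l_0$, so $\bigvee_{k>l}(M(k)\setminus N(k))=0$ for every $l\ge l_0$, and hence the outer meet vanishes; the second bullet then gives $\Vdash\dot M\subseteq^*\dot N$.

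The only step deserving care is the Boolean-valued computation in the first bullet: one must show that the sentence ``for every $l$ there exists $k>l$ with $k\in\dot M$'' has Boolean value $\bigwedge_{l}\bigvee_{k>l}M(k)$. This is the standard fact that quantifiers over a ground-model set are evaluated by infima and suprema in a complete Boolean algebra; concretely, $p\in\mathbb{M}_\kappa$ forces the sentence iff $p\le\bigvee_{k>l}M(k)$ for every $l$, i.e.\ iff $p\le\bigwedge_{l}\bigvee_{k>l}M(k)$. Apart from this routine verification, the lemma is just an unfolding of the definitions.
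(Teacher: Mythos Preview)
Your proof is correct and follows the same approach as the paper: the paper's one-line proof simply records the identity $\llbracket \forall l\ \exists k>l\ k\in\dot M\rrbracket=\bigwedge_{l\in\omega}\bigvee_{k>l}M(k)$ and leaves the rest implicit, while you spell out explicitly how the second and third bullets follow from the first via Lemma~\ref{homodot} and the definition of $\le^*$.
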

\begin{proof} Just notice that $\llbracket \forall l \ \exists k>l \ k\in \dot{M}\rrbracket = \bigwedge_{l\in\omega} \bigvee_{k>l} M(k)$.
\end{proof}

We need the following piece of notation in the sequel.

\begin{df}\label{notacja} For $X\subseteq \omega$ define $\check{X} \in \mathbb{M}^\omega_\kappa$ for every $k\in\omega$ by
$$
\check{X}(k)=
\begin{cases}
	1, \mbox{ if } k\in X, \\
	0, \mbox{ otherwise.}
\end{cases}
$$
For $q\in \mathbb{M}_\kappa$ define $\vec{q} \in \mathbb{M}^\omega_\kappa$ for every $k\in\omega$ by
\[ \vec{q}(k) = q. \]
\end{df}

Notice that if we interpret $\check{X}$ as an $\mathbb{M}_\kappa$-name $(\check{X})\dot{ }$ for a subset of $\omega$, then it is just the canonical $\mathbb{M}_\kappa$-name for $X$ (so the usage of $\check{\ }$ is justified). We will thus also write $\check{X}$ for the interpretation of $\check{X}$ as an $\mathbb{M}_\kappa$-name, omitting the dot $\dot{ }$.

\begin{lem}\label{restr_incl}
Let $M,N\in\mathbb{M}_\kappa^\omega$ and $q\in\mathbb{M}_\kappa$ be such that $q\Vdash\dot{M}\subseteq\dot{N}$. Then, for $M'=M\wedge\vec{q}$ and $N'=N\wedge\vec{q}$ we have:
\begin{enumerate}
	\item $M'\le N'$, and
	\item $q\Vdash\dot{M}=\dot{M}',\dot{N}=\dot{N}'$.
\end{enumerate}
\end{lem}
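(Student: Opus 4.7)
The plan is to unpack each statement coordinate-wise and reduce it to a Boolean computation in $\mathbb{M}_\kappa$, using Lemma \ref{names_conditions} to translate between names for subsets of $\omega$ and their defining elements of $\mathbb{M}_\kappa^\omega$. The key translation is the standard forcing equivalence: for $p,r\in\mathbb{M}_\kappa$, one has $p\Vdash r_1\to r_2$ (for events $r_1,r_2$) iff $p\wedge r_1\le r_2$. Applied to $\dot{M}\subseteq\dot{N}$, this says that $q\Vdash\dot{M}\subseteq\dot{N}$ is equivalent to the family of inequalities $q\wedge M(k)\le N(k)$ holding for every $k\in\omega$ (using $\llbracket k\in\dot{M}\rrbracket=M(k)$ and $\llbracket k\in\dot{N}\rrbracket=N(k)$).

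For part (1), I would start from these coordinatewise inequalities $q\wedge M(k)\le N(k)$ and then meet both sides with $q$ to obtain $M(k)\wedge q\le N(k)\wedge q$, that is, $M'(k)\le N'(k)$ for every $k\in\omega$. Since the ordering on $\mathbb{M}_\kappa^\omega$ is defined coordinatewise, this is exactly $M'\le N'$.

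For part (2), it suffices to check $q\Vdash\dot{M}=\dot{M}'$ (the argument for $\dot{N}$ and $\dot{N}'$ is identical). By the same translation principle, this amounts to showing $q\wedge M(k)=q\wedge M'(k)$ for each $k\in\omega$, which is immediate from the definition of $M'$:
\[
q\wedge M'(k)\;=\;q\wedge\bigl(M(k)\wedge q\bigr)\;=\;q\wedge M(k).
\]
Thus $q$ forces $k\in\dot{M}\iff k\in\dot{M}'$ for every $k$, whence $q\Vdash\dot{M}=\dot{M}'$.

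There is no genuine obstacle here; the lemma is a bookkeeping statement formalizing the idea of ``restricting a name for a subset of $\omega$ below the condition $q$'', and all the work is done by Lemma \ref{names_conditions} together with a one-line meet computation. The only thing to be careful about is to consistently use the fact that $\vec{q}(k)=q$ for all $k$, so that both the coordinatewise inequality in (1) and the coordinatewise identity in (2) follow uniformly in $k$.
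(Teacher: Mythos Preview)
Your proof is correct and in fact more direct than the paper's. You translate the forcing hypothesis $q\Vdash\dot{M}\subseteq\dot{N}$ into the coordinatewise Boolean inequality $q\wedge M(k)\le N(k)$ via Lemma \ref{names_conditions}, and both (1) and (2) then follow from one-line meet computations. The paper instead argues both parts by contradiction at the level of forcing semantics: for (1) it assumes some $K=M'(k)\setminus N'(k)\neq 0$, observes $K\le q$ and $K\Vdash k\in\dot{M}\setminus\dot{N}$, and derives a contradiction via a generic filter containing both $q$ and $K$; for (2) it assumes some $r\le q$ forces $k\in\dot{M}\setminus\dot{M}'$ and checks that $r\le M'(k)$. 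Your approach avoids the detour through generics and is the cleaner argument, though the paper's version has the minor expository advantage of not explicitly invoking the identity $q\Vdash(\varphi\to\psi)\iff q\wedge\llbracket\varphi\rrbracket\le\llbracket\psi\rrbracket$ in a complete Boolean algebra.
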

\begin{proof}
We first prove (1). For the sake of contradiction, assume there is $k\in\omega$ such that $K=M'(k)\setminus N'(k)\neq0$. Since $K\le M'(k)=\llbracket k\in\dot{M}'\rrbracket$, we have $K\Vdash k\in\dot{M}'$, so by Corollary \ref{homodotgm} it holds $K\Vdash k\in\dot{M}$. Similarly, as $K\wedge N'(k)=0$ and $K\le q$, we get that $K\wedge N(k)=0$ and hence that $K\Vdash k\not\in\dot{N}$. Summing up, $K\le q$ and $K\Vdash k\in\dot{M}\setminus\dot{N}$.

Let now $G$ be an $\mathbb{M}_\kappa$-generic filter containing both $q$ and $K$. In $V[G]$, we have $k\in\dot{M}_G\setminus\dot{N}_G$ (since $K\in G$) and $\dot{M}_G\subseteq\dot{N}_G$ (since $q\in G$), which is impossible. It follows that $M'(k)\le N'(k)$ for every $k\in\omega$ and thus that $M'\le N'$.

To prove (2), first note that by Corollary \ref{homodotgm} we have $q\Vdash\dot{M}'\subseteq\dot{M}$. If $q\not\Vdash\dot{M}\subseteq\dot{M}'$, then there is $r\le q$ and $k\in\omega$ such that $r\Vdash k\in\dot{M}\setminus\dot{M}'$. Since $r\le\llbracket k\in\dot{M}\rrbracket=M(k)$ and $r\le q$, we have $r\le M'(k)$, whence $r\Vdash k\in\dot{M}'$, which is a contradiction. The case of $q\Vdash\dot{N}=\dot{N}'$ is identical.
\end{proof}

\subsection{Names for reals.} It is well known that there is a correspondence between $\mathbb{M}_\kappa$-names for reals and the ground model Borel functions $f\colon 2^\kappa \to \mathbb{R}$ (see e.g. the classical article of Scott \cite{Scott},
or \cite{Abramson}). In this section we will exploit this approach, although we will rather encode names for reals using measures than functions.

Consider the $\mathbb{M}_\kappa$-name \[ \dot{r} = \big\{\langle \langle \alpha,i \rangle, C^i_\alpha\rangle\colon \alpha < \kappa, i\in \{0,1\}\big\}, \]  where  $C^i_\alpha = \{x\in 2^\kappa\colon x(\alpha)=i\}$. We say that $\dot{r}$ is the name for \emph{the generic element
of $2^\kappa$} (or \emph{the generic random real} in case $\kappa = \omega$). Notice that 
$\llbracket \dot{r} \in C^i_\alpha \rrbracket = C^i_\alpha$ for every $\alpha<\kappa$ and $i\in \{0,1\}$. In a generic extension $V[G]$, $\dot{r}_G$ is the unique element of
$2^\kappa$ contained in every
element of the generic filter. Notice that the generic filter on $\mathbb{M}_\kappa$ induces the unique generic element of $2^\kappa$ and \textit{vice versa} (see e.g. \cite[page 6]{Sakae}).


We will need the following standard lemma. (As usual, \textit{a.e.} is the abbreviation for \textit{almost everywhere}.)

\begin{lem}\label{f-name}
	Let $f\colon 2^\kappa \to \mathbb{R}$ be a Borel function in the ground model $V$, $A\in\wp(\mathbb{R})\cap V$\footnote{Note that speaking about $f$ and $A$ we mean their ground model Borel codes, not the actual sets.}, and $y\in\mathbb{R}\cap V$. Let $p\in \mathbb{M}_\kappa$. Then
	\begin{enumerate}
		\item $(p \Vdash f(\dot{r}) \in A) \iff p \setminus \{z\in 2^\kappa\colon f(z)\in A\} \in \mathcal{N}_\kappa$;
		\item $(p\Vdash f(\dot{r})=y) \iff f=y$, $\lambda_\kappa$-a.e. on $p$.
	\end{enumerate}
\end{lem}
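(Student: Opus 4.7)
The plan is to reduce both parts of the lemma to the single basic identity
\[
\llbracket \dot{r} \in B \rrbracket_{\mathbb{M}_\kappa} = [B]_{\mathbb{M}_\kappa}
\]
for every Borel set $B\subseteq 2^\kappa$ coded in $V$, where $[B]_{\mathbb{M}_\kappa}$ denotes the class of $B$ modulo $\mathcal{N}_\kappa$. Once this is in hand, (1) is just an unpacking of the definition of forcing in the measure algebra, and (2) is the instance $A=\{y\}$.

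First I would establish the displayed identity by induction on the Borel complexity of $B$. The base case $B=C^i_\alpha$ is already built into the definition of $\dot{r}$: exactly as noted in the paragraph preceding the lemma, $\llbracket \dot{r} \in C^i_\alpha \rrbracket = C^i_\alpha$. For the inductive step, forcing commutes with Boolean operations, and these match the corresponding $\sigma$-complete operations in $\mathbb{M}_\kappa$, so the identity propagates through complements and countable unions of Borel codes.

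Next I would apply the identity to $B = f^{-1}(A)$, which is Borel in $V$ with a code computable from the codes of $f$ and $A$. Since the name $f(\dot{r})$ is defined by applying the code of $f$ to $\dot{r}$ step by step,
\[
\llbracket f(\dot{r}) \in A \rrbracket_{\mathbb{M}_\kappa} = \llbracket \dot{r} \in f^{-1}(A) \rrbracket_{\mathbb{M}_\kappa} = [f^{-1}(A)]_{\mathbb{M}_\kappa}.
\]
Part (1) then follows, since by the definition of the forcing relation $p \Vdash f(\dot{r}) \in A$ is equivalent to $p \le [f^{-1}(A)]_{\mathbb{M}_\kappa}$, and in the measure algebra this in turn means exactly $p \setminus \{z \in 2^\kappa \colon f(z) \in A\} \in \mathcal{N}_\kappa$.

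For (2), I would specialise to $A=\{y\}$: the condition $p \setminus f^{-1}(\{y\}) \in \mathcal{N}_\kappa$ literally says that $\lambda_\kappa$-almost every $z\in p$ satisfies $f(z)=y$, i.e.\ $f=y$ a.e.\ on $p$. The only genuinely delicate point in the whole argument is the book-keeping of Borel codes: one must be careful that ``the same Borel function $f$ in the extension'' is unambiguous and that the computation of $\llbracket f(\dot{r}) \in A \rrbracket$ via the code agrees with applying $f$ to $\dot{r}_G$ in $V[G]$. This is handled by the absoluteness of Borel codes along the lines of Scott's approach already cited in the preamble of the subsection, which the lemma's footnote is signalling.
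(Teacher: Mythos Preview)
Your proposal is correct and follows essentially the same route as the paper. The paper's proof is a two-liner: $p\Vdash f(\dot r)\in A$ iff $p\Vdash \dot r\in f^{-1}[A]$ iff $p\le f^{-1}[A]$, and (2) is the special case $A=\{y\}$; you simply fill in more detail by sketching the inductive verification of $\llbracket \dot r\in B\rrbracket=[B]_{\mathbb{M}_\kappa}$ and flagging the absoluteness of Borel codes, which the paper takes for granted.
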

\begin{proof}	
	For (1), just notice that $p \Vdash f(\dot{r})\in A$ is equivalent to $p\Vdash \dot{r}\in f^{-1}[A]$ which is on the other hand equivalent to $p\leq f^{-1}[A]$. (2) follows from (1) by setting $A=\{y\}$.
\end{proof}

\begin{cor}\label{f-name-ineq}
	Let $f_1,f_2\colon 2^\kappa\to\mathbb{R}$ be Borel functions in the ground model $V$. Then, $f_1\le f_2$ $\lambda_\kappa$-a.e if and only if $\Vdash_{\mathbb{M}_\kappa}f_1(\dot{r})\le f_2(\dot{r})$.
\end{cor}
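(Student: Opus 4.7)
The plan is to deduce this corollary directly from Lemma~\ref{f-name}(1), using the observation that $f_1 \le f_2$ is a Borel condition on $2^\kappa$ and therefore determines an element of $\mathbb{M}_\kappa$ whose value decides the forcing statement.

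For the forward direction, I would assume $f_1 \le f_2$ holds $\lambda_\kappa$-a.e., and let $g = f_2 - f_1 \colon 2^\kappa \to \mathbb{R}$, which is Borel in $V$. Taking $A = [0,\infty)$ and $p = 1$ in Lemma~\ref{f-name}(1), the hypothesis says precisely that $1 \setminus g^{-1}[A] = \{z : f_1(z) > f_2(z)\}$ lies in $\mathcal{N}_\kappa$, and hence $\Vdash_{\mathbb{M}_\kappa} g(\dot r) \ge 0$. Using the standard (and routine) fact that the name $g(\dot r)$ coincides with $f_2(\dot r) - f_1(\dot r)$, this gives $\Vdash_{\mathbb{M}_\kappa} f_1(\dot r) \le f_2(\dot r)$.

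For the converse, suppose toward contradiction that $B = \{z \in 2^\kappa : f_1(z) > f_2(z)\}$ has positive measure. Then $p := [B] \in \mathbb{M}_\kappa$ is nonzero, and by construction $p \setminus B \in \mathcal{N}_\kappa$. Applying Lemma~\ref{f-name}(1) to the Borel function $h = f_1 - f_2$ with $A = (0,\infty)$ and the condition $p$, we get $p \Vdash_{\mathbb{M}_\kappa} f_1(\dot r) > f_2(\dot r)$, contradicting the assumption that $\Vdash_{\mathbb{M}_\kappa} f_1(\dot r) \le f_2(\dot r)$.

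The only mildly non-routine point is making sure that the names $g(\dot r)$ and $f_2(\dot r) - f_1(\dot r)$ really refer to the same name for a real, but this is part of the standard Borel-function/name correspondence cited before Lemma~\ref{f-name} (e.g.\ Scott's approach), and it can always be sidestepped by arguing condition-by-condition with Lemma~\ref{f-name}(1) as in the contradiction above. So I expect no genuine obstacle; the statement is essentially a reformulation of Lemma~\ref{f-name}(1) for comparisons rather than membership in a fixed Borel set.
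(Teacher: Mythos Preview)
Your proposal is correct and follows essentially the same approach as the paper: both apply Lemma~\ref{f-name}(1) to the difference function (you use $g=f_2-f_1$ with $A=[0,\infty)$ in one direction and $h=f_1-f_2$ with $A=(0,\infty)$ in the other, while the paper works throughout with $f=f_2-f_1$ and $A=(-\infty,0)$), and the converse is argued by exhibiting a condition inside the ``bad'' Borel set $\{z:f_1(z)>f_2(z)\}$. Your remark about identifying $g(\dot r)$ with $f_2(\dot r)-f_1(\dot r)$ is exactly what the paper handles by introducing a name $\dot x$ with $\Vdash \dot x = f_2(\dot r)-f_1(\dot r)$, so there is no genuine gap.
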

\begin{proof}
Let $\dot{x}$ be an $\mathbb{M}_\kappa$-name for an element of $\mathbb{R}$	such that $\Vdash_{\mathbb{M}_\kappa}\dot{x} = f_2(\dot{r})-f_1(\dot{r})$. Let $A=(-\infty,0)$. 

	Assume that $f_1\le f_2$ $\lambda_\kappa$-a.e. If there is $p\in\mathbb{M}_\kappa$ such that $p\Vdash\dot{x}\in A$, then by Lemma \ref{f-name}.(1) applied for $f=f_2-f_1$ we have $\lambda_\kappa(p)=0$ (since $\lambda_\kappa(f^{-1}[A])=0$), which is impossible, so $1\Vdash f_1(\dot{r})\le f_2(\dot{r})$.

	Conversely, if $\Vdash_{\mathbb{M}_\kappa}f_1(\dot{r})\le f_2(\dot{r})$, then for every $p\in\mathbb{M}_\kappa$ we have $p\Vdash\dot{x}\not\in A$, hence by Lemma \ref{f-name}.(1) we get that $p \setminus \{z\in 2^\kappa\colon (f_2-f_1)(z)\in A\} \not\in \mathcal{N}_\kappa$. It follows that for the condition $p=\{z\in 2^\kappa\colon (f_2-f_1)(z)\in A\}$ we have $p\in\mathcal{N}_\kappa$ and hence $f_1\le f_2$ $\lambda_\kappa$-a.e.
\end{proof}

To each ground model $\sigma$-additive Borel measure on $2^\kappa$ which is absolutely continuous with respect to $\lambda_\kappa$ one can linearly assign a real in any $\mathbb{M}_\kappa$-generic extension.

\begin{df}\label{names-as-measures}
Let $\mu$ be a Borel measure on $2^\kappa$ such that $\mu
\ll \lambda_\kappa$. Then, by
$\dot{r}(\mu)$ we will mean an $\mathbb{M}_\kappa$-name for a real prepared in the following way. By Radon--Nikodym theorem there is a Borel function $f\colon 2^\kappa \to \mathbb{R}$ such that $\mu(A) = \int_A f \ d\lambda_\kappa$ for every $A\in
Bor(2^\kappa)$. Now, let $\dot{r}(\mu)$ be an $\mathbb{M}_\kappa$-name such that $\Vdash_{\mathbb{M}_\kappa}\dot{r}(\mu)=f(\dot{r})$, where $\dot{r}$ is the generic random real. 
\end{df}

\begin{rem}\label{dodawanie-nazw} 
	Notice that $\dot{r}(\mu)$ is uniquely determined by $\mu$ (despite the fact that the Radon--Nikodym derivative $f$ is determined modulo $\mathcal{N}_\kappa$). Also, if $\mu$, $\nu_0$, $\nu_1$ are $\sigma$-additive Borel measures which are absolutely continuous with respect to
	$\lambda_\kappa$, and $\alpha_0$, $\alpha_1$ are non-negative real numbers, $\mu = \alpha_0\nu_0 + \alpha_1\nu_1$, then by the linearity of the Radon--Nikodym derivative it holds $\Vdash_{\mathbb{M}_\kappa} \dot{r}(\mu) =
	\alpha_0\dot{r}(\nu_0) + \alpha_1\dot{r}(\nu_1)$. 
\end{rem}

\subsection{Names for measures induced by ultrafilters.\label{name-for-measure}}
All ultrafilters on $\omega$ considered in this and following sections are assumed to be non-principal.

Fix an ultrafilter $\mathcal{U}$ on $\omega$ (in the ground model).
Following the ideas of Solovay \cite{Solovay}, we are going to define an $\mathbb{M}_\kappa$-name for a finitely additive measure on $\omega$, which is in a sense generated by $\mathcal{U}$.

Let $\dot{M}$ be an $\mathbb{M}_\kappa$-name for a subset of $\omega$ (induced by an element $M\in\mathbb{M}_\kappa^\omega$ as described in Section \ref{sec:names-subsets}) and define the function $\mu_{M}\colon\mathbb{M}_\kappa\to[0,1]$ in the following way:
\[ \mu_{M}(B) = \lim_{k\to \mathcal{U}} \lambda_\kappa (\llbracket k\in \dot{M} \rrbracket \wedge B). \]
It is immediate that $\mu_{M}$ is a finitely additive measure on $\mathbb{M}_\kappa$. Obviously, $\mu_{\check{\emptyset}}=0$ and $\mu_{\check{\omega}}=\lambda_\kappa$.

\begin{lem} For each $\mathbb{M}_\kappa$-name $\dot{M}$ for a subset of $\omega$, the measure $\mu_{M}$ is $\sigma$-additive and $\mu_{M}\ll \lambda_\kappa$.
\end{lem}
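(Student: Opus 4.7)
The plan is to derive both conclusions from a single very simple observation: $\mu_M$ is dominated pointwise by $\lambda_\kappa$ on the algebra $\mathbb{M}_\kappa$. Indeed, for every $B\in\mathbb{M}_\kappa$ and every $k\in\omega$,
\[
\lambda_\kappa\bigl(\llbracket k\in\dot{M}\rrbracket\wedge B\bigr)\le \lambda_\kappa(B),
\]
so taking the limit along $\mathcal{U}$ yields $\mu_M(B)\le\lambda_\kappa(B)$. This bound does all the work.

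From this, absolute continuity $\mu_M\ll\lambda_\kappa$ is immediate: whenever $\lambda_\kappa(B)=0$ (that is, $B=0$ in $\mathbb{M}_\kappa$), also $\mu_M(B)=0$.

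For $\sigma$-additivity, I would argue via continuity from above, which is equivalent to $\sigma$-additivity for a finitely additive positive measure on a $\sigma$-complete Boolean algebra such as $\mathbb{M}_\kappa$. Let $(B_n)$ be a disjoint sequence in $\mathbb{M}_\kappa$ with supremum $B=\bigvee_n B_n$, and set $C_N = B\setminus\bigvee_{n<N} B_n$. By finite additivity,
\[
\mu_M(B) = \sum_{n<N}\mu_M(B_n) + \mu_M(C_N).
\]
The elements $C_N$ decrease to $0$ in $\mathbb{M}_\kappa$, so the $\sigma$-additivity of $\lambda_\kappa$ gives $\lambda_\kappa(C_N)\to 0$. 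Combined with the domination $\mu_M(C_N)\le\lambda_\kappa(C_N)$ established above, this forces $\mu_M(C_N)\to 0$, whence $\mu_M(B)=\sum_n\mu_M(B_n)$, as required.

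There is essentially no obstacle here; the whole proof rests on noticing that each term in the limit defining $\mu_M(B)$ is dominated by $\lambda_\kappa(B)$, and then invoking the standard fact that a finitely additive measure dominated by a $\sigma$-additive one is itself $\sigma$-additive.
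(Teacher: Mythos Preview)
Your proof is correct and follows essentially the same line as the paper's: both rest on the single observation that $\mu_M\le\lambda_\kappa$ pointwise, from which absolute continuity is immediate and $\sigma$-additivity follows by continuity from above at $0$ inherited from $\lambda_\kappa$. The only cosmetic difference is that you spell out the passage from continuity at $0$ to countable additivity, whereas the paper simply cites this implication.
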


\begin{proof} Clearly, $\mu_{M} \leq \lambda_\kappa$. It implies, of course, that $\mu_{M}\ll \lambda_\kappa$ but also that $\mu_{M}$ is continuous from above at $\emptyset$. Indeed, let $(B_n)$ be a decreasing sequence of
	Borel subsets of $2^\kappa$ such that $\bigcap_{n\in\omega} B_n=\emptyset$. Then, 
	$\lambda_\kappa(B_n) \geq \mu_{M}(B_n)$
	for every $n\in \omega$ and so, by the continuity of $\lambda_\kappa$, $\lim_{n\to\infty} \mu_{M}(B_n)  = 0$. It follows that $\mu_{M}$ is $\sigma$-additive, too.
\end{proof}

Now, with $\mu_{M}$ we can associate an $\mathbb{M}_\kappa$-name for a real $\dot{r}(\mu_{M})$ in the way described in Definition \ref{names-as-measures}. Finally, let $\dot{\mu}_\mathcal{U}$ be such an $\mathbb{M}_\kappa$-name for a function
$\wp(\omega)\to[0,1]$ such that for every $\mathbb{M}_\kappa$-name $\dot{M}$ for a subset of $\omega$ we have
\[\Vdash_{\mathbb{M}_\kappa} \dot{\mu}_\mathcal{U}(\dot{M}) = \dot{r}(\mu_{M}).\]

The following fact is proved in  \cite{Solovay} (see also \cite{Kunen-special-points}).

\begin{prop} \[ \Vdash_{\mathbb{M}_\kappa}``\dot{\mu}_\mathcal{U} \mbox{ is a finitely additive probability measure on }\omega". \]
\end{prop}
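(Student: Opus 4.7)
My plan is to check the four defining requirements of a finitely additive probability measure (values in $[0,1]$, $\dot{\mu}_\mathcal{U}(\emptyset)=0$, $\dot{\mu}_\mathcal{U}(\omega)=1$, and finite additivity) by reducing each to a property of the ground-model measures $\mu_M$ and their Radon--Nikodym derivatives, then translating back to the extension via the dictionary developed in Section 4.2.

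The boundary and range statements are essentially immediate from what is already observed. Since $\mu_{\check{\emptyset}}=0$ and $\mu_{\check{\omega}}=\lambda_\kappa$, their Radon--Nikodym derivatives are $\lambda_\kappa$-a.e.\ equal to the constants $0$ and $1$, so Lemma~\ref{f-name}(2) forces $\dot{\mu}_\mathcal{U}(\emptyset)=0$ and $\dot{\mu}_\mathcal{U}(\omega)=1$. For an arbitrary $M\in\mathbb{M}_\kappa^\omega$ the inequalities $0\le\mu_M\le\lambda_\kappa$ hold on every Borel set, which forces $0\le f_M\le 1$ $\lambda_\kappa$-a.e.; Corollary~\ref{f-name-ineq} applied to the pairs $(0,f_M)$ and $(f_M,1)$ then yields $\Vdash_{\mathbb{M}_\kappa}0\le\dot{\mu}_\mathcal{U}(\dot{M})\le 1$.

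The main work is finite additivity. Fix a condition $p$ and names $\dot{N}_1,\dot{N}_2$ for subsets of $\omega$ with $p\Vdash\dot{N}_1\cap\dot{N}_2=\emptyset$. By Lemma~\ref{names} I may assume $\dot{N}_i=\dot{M}_i$ for some $M_i\in\mathbb{M}_\kappa^\omega$, and then I follow the idea of Lemma~\ref{restr_incl} and pass to $M'_i = M_i\wedge\vec{p}$. The forced disjointness unpacks to $p\wedge M_1(k)\wedge M_2(k)=0$ for each $k$, which delivers genuine componentwise disjointness $M'_1(k)\wedge M'_2(k)=0$, while Lemma~\ref{restr_incl}(2) keeps $p\Vdash \dot{M}_i=\dot{M}'_i$. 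Componentwise disjointness, additivity of $\lambda_\kappa$, and linearity of $\lim_{k\to\mathcal{U}}$ then give
\[ \mu_{M'_1\vee M'_2}(B) \;=\; \lim_{k\to\mathcal{U}}\lambda_\kappa\bigl((M'_1(k)\vee M'_2(k))\wedge B\bigr) \;=\; \mu_{M'_1}(B)+\mu_{M'_2}(B) \]
for every Borel $B\subseteq 2^\kappa$, so $\mu_{M'_1\vee M'_2}=\mu_{M'_1}+\mu_{M'_2}$ as $\sigma$-additive measures. By Remark~\ref{dodawanie-nazw}, $\Vdash \dot{r}(\mu_{M'_1\vee M'_2}) = \dot{r}(\mu_{M'_1}) + \dot{r}(\mu_{M'_2})$, and combining with $p\Vdash\dot{M}_i=\dot{M}'_i$ and Lemma~\ref{homodot} yields $p\Vdash\dot{\mu}_\mathcal{U}(\dot{N}_1\cup\dot{N}_2)=\dot{\mu}_\mathcal{U}(\dot{N}_1)+\dot{\mu}_\mathcal{U}(\dot{N}_2)$.

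The step I expect to be the main obstacle is precisely the last one: passing from conditional disjointness $p\Vdash\dot{M}_1\cap\dot{M}_2=\emptyset$ to genuine componentwise disjointness in $\mathbb{M}_\kappa^\omega$, which is what allows the definition of $\mu_M$ via $\lim_{k\to\mathcal{U}}\lambda_\kappa$ to inherit additivity automatically. The trick of restricting each $M_i$ to $\vec{p}$ is what converts a statement about forcing below $p$ into a clean unconditional identity of ground-model measures, to which Remark~\ref{dodawanie-nazw} can then be applied.
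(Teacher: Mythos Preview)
Your proof is correct and follows essentially the same strategy as the paper: reduce to genuinely componentwise-disjoint elements of $\mathbb{M}_\kappa^\omega$, then use additivity of $\lambda_\kappa$ and linearity of $\lim_{k\to\mathcal{U}}$ together with Remark~\ref{dodawanie-nazw}. The only difference is cosmetic: the paper works in a fixed extension $V[G]$ and obtains disjointness by the slightly simpler device of replacing $N$ with $N\setminus M$ (which still interprets to $B$ since $A\cap B=\emptyset$), rather than restricting both names to $\vec{p}$ via Lemma~\ref{restr_incl}.
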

\begin{proof}
Let $G$ be an $\mathbb{M}_\kappa$-generic filter over the ground model $V$. We need to show that in the extension $V[G]$ the interpretation $\rho=(\dot{\mu}_\mathcal{U})_G$ of the name $\dot{\mu}_\mathcal{U}$ through $G$ is a finitely additive probability measure on $\omega$. It is immediate that $\rho(\emptyset)=0$ and $\rho(\omega)=1$.

Let $A,B\in V[G]$ be two subsets of $\omega$ such that $A\cap B=\emptyset$. By Lemma \ref{names}, there are in $V$ two elements $M,N\in\mathbb{M}_\kappa^\omega$ such that $\dot{M}_G=A$ and $\dot{N}_G=B$. By taking $N\setminus M$ instead of $N$ and appealing to Corollary \ref{homodotgm}, we may assume that $M\wedge N=0$
, hence $\lambda_\kappa(M(k) \vee N(k)) = \lambda_\kappa(M(k)) + \lambda_\kappa(N(k))$ for each $k\in\omega$. By the additivity of the limit $\lim_{n\to \mathcal{U}}$, we have $\mu_{M \lor N} = \mu_{M} + \mu_{N}$. By Remark \ref{dodawanie-nazw} we are done.
\end{proof}

We call $\dot{\mu}_\mathcal{U}$ \emph{the $\mathbb{M}_\kappa$-name for the Solovay measure associated to the ultrafilter $\mathcal{U}$}. If $G$ is an $\mathbb{M}_\kappa$-generic filter over $V$, then the evaluation $(\dot{\mu}_\mathcal{U})_G$ in $V[G]$ is called
\emph{the Solovay measure (associated to $\mathcal{U}$)}. Usually, we will simply write $\dot{\mu}_\mathcal{U}$ instead of $(\dot{\mu}_\mathcal{U})_G$, in particular, when we do not have in mind any specific extension $V[G]$.

The next proposition shows that, in $V^{\mathbb{M}_\kappa}$, $\dot{\mu}_\mathcal{U}$ extends the ground model one-point measure $\delta_{\mathcal{U}}$ on $\omega$, that is, the measure defined for every $A\in\wp(\omega)\cap V$ as follows: $\delta_{\mathcal{U}}(A)=1$ if $A\in\mathcal{U}$, and $\delta_{\mathcal{U}}(A)=0$ otherwise.

\begin{prop}\label{rho_extends}
For every $A\in\wp(\omega)\cap V$ we have $\Vdash_{\mathbb{M}_\kappa}\dot{\mu}_\mathcal{U}(A)=\delta_{\mathcal{U}}(A)$.
\end{prop}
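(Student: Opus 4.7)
The plan is to unwind the definition of $\dot{\mu}_\mathcal{U}$ on the canonical name $\check{A}$ for a ground model set $A$, and show that the associated measure on $\mathbb{M}_\kappa$ is either $\lambda_\kappa$ or the zero measure depending on whether $A$ belongs to $\mathcal{U}$.

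First I would identify the $\mathbb{M}_\kappa$-name for $A$ with the element $\check{A} \in \mathbb{M}_\kappa^\omega$ from Definition \ref{notacja}, so that $\llbracket k \in \check{A}\rrbracket = \check{A}(k)$ equals $1$ if $k \in A$ and $0$ otherwise. Plugging into the definition from Section \ref{name-for-measure} gives, for every $B \in \mathbb{M}_\kappa$,
\[
\mu_{\check{A}}(B) \;=\; \lim_{k\to \mathcal{U}} \lambda_\kappa\bigl(\check{A}(k) \wedge B\bigr).
\]
Since $\check{A}(k) \wedge B$ equals $B$ for $k \in A$ and equals $0$ for $k \notin A$, the $\mathcal{U}$-limit depends only on which of $A$ or $\omega \setminus A$ belongs to $\mathcal{U}$. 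If $A \in \mathcal{U}$, the limit is $\lambda_\kappa(B)$, so $\mu_{\check{A}} = \lambda_\kappa$; if $A \notin \mathcal{U}$, the limit is $0$, so $\mu_{\check{A}} = 0$.

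Next I would pass from $\mu_{\check{A}}$ to the name $\dot{r}(\mu_{\check{A}})$ via Definition \ref{names-as-measures}. In the first case, the Radon--Nikodym derivative of $\lambda_\kappa$ with respect to itself is the constant $1$ ($\lambda_\kappa$-a.e.), so Lemma \ref{f-name}(2) yields $\Vdash_{\mathbb{M}_\kappa} \dot{r}(\mu_{\check{A}}) = 1$. In the second case, the derivative of the zero measure is $0$ a.e., so $\Vdash_{\mathbb{M}_\kappa} \dot{r}(\mu_{\check{A}}) = 0$. By the defining equation $\Vdash_{\mathbb{M}_\kappa} \dot{\mu}_\mathcal{U}(\check{A}) = \dot{r}(\mu_{\check{A}})$, this gives $\Vdash_{\mathbb{M}_\kappa} \dot{\mu}_\mathcal{U}(A) = 1$ when $A \in \mathcal{U}$ and $\Vdash_{\mathbb{M}_\kappa} \dot{\mu}_\mathcal{U}(A) = 0$ when $A \notin \mathcal{U}$, which is exactly $\delta_\mathcal{U}(A)$ in each case.

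There is really no hard step here; the only subtlety is matching conventions, namely checking that the canonical name of the ground model set $A$ is indeed the name induced by $\check{A} \in \mathbb{M}_\kappa^\omega$ in the sense of Section \ref{sec:names-subsets}, which was already observed right after Definition \ref{notacja}. Everything else is a one-line application of the definition of $\mu_{M}$ and of the Radon--Nikodym correspondence encoded by Definition \ref{names-as-measures}.
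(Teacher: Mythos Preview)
Your proof is correct and follows essentially the same approach as the paper: both compute $\mu_{\check{A}}$ directly from the definition, obtaining $\lambda_\kappa$ when $A\in\mathcal{U}$ and the zero measure otherwise, and then read off the forced value of $\dot{\mu}_\mathcal{U}(A)$. The paper's proof is a two-line compression of your argument, omitting the explicit Radon--Nikodym step you spell out via Lemma~\ref{f-name}(2).
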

\begin{proof}
For every $A\in\mathcal{U}$, $\mu_{\check{A}}=\lambda_\kappa$ and $\mu_{(\omega\setminus A)\check{\ }}=0$, so $\Vdash_{\mathbb{M}_\kappa}\dot{\mu}_\mathcal{U}(A)=1$ and $\Vdash_{\mathbb{M}_\kappa}\dot{\mu}_\mathcal{U}(\omega\setminus A)=0$.
\end{proof}

Consequently, we obtain that if, in $V$, $\mathcal{A}\subseteq\wp(\omega)$, $\varphi\colon\mathcal{A}\to[0,1]$ is a function, and there is $A\in\mathcal{A}\cap\mathcal{U}$ such that $0<\varphi(A)<1$, then, in $V^{\mathbb{M}_\kappa}$, $\dot{\mu}_\mathcal{U}$ does not extend $\varphi$. In particular, $\dot{\mu}_\mathcal{U}$ is not an extension of any non-atomic ground model measure on $\omega$.

\subsection{P-measure in the random model.}

For an ultrafilter $\mathcal{U}$ on $\omega$ by $\dot{\mu}_\mathcal{U}$ we denote the $\mathbb{M}_\kappa$-name for the Solovay measure on $\omega$ associated to $\mathcal{U}$. Our main result is the following.

\begin{thm}\label{thm:random_pp_ap} If $\mathcal{U}$ is a P-point, then $\Vdash_{\mathbb{M}_\kappa}$``$\dot{\mu}_\mathcal{U}$ is a P-measure''. 
\end{thm}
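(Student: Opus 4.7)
\medskip
The plan is to decode the data of a name $(\dot E_n)$ back into $V$ as a sequence in $\mathbb{M}_\kappa^\omega$, identify the intended limit measure on $2^\kappa$, and construct a witnessing pseudointersection by a diagonal argument powered by the P-point property of $\mathcal{U}$. Fix a condition $p$ and an $\mathbb{M}_\kappa$-name for a decreasing sequence of subsets of $\omega$; by Lemma \ref{names} each term is represented by some $M_n \in \mathbb{M}_\kappa^\omega$, and by Lemma \ref{restr_incl} (after replacing each $M_n$ by $M_n \wedge \vec{p}$) we may assume $p = 1$ and $M_{n+1} \le M_n$ coordinatewise. The Borel measures $\mu_{M_n}$ from Section~\ref{name-for-measure} then form a decreasing sequence on $2^\kappa$ dominated by $\lambda_\kappa$; let $\mu_\infty = \lim_n \mu_{M_n}$. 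By monotone convergence the Radon--Nikodym derivatives $d\mu_{M_n}/d\lambda_\kappa$ decrease $\lambda_\kappa$-a.e.\ to $d\mu_\infty/d\lambda_\kappa$, so in any random extension $\dot r(\mu_{M_n}) \downarrow \dot r(\mu_\infty)$, i.e.\ $\Vdash \lim_n \dot{\mu}_\mathcal{U}(\dot E_n) = \dot r(\mu_\infty)$. It therefore suffices to produce $M \in \mathbb{M}_\kappa^\omega$ in $V$ satisfying (i) $M \le^* M_n$ for every $n$, and (ii) $\mu_M = \mu_\infty$: Lemma \ref{almost} then gives $\Vdash \dot M \subseteq^* \dot M_n$, while (ii) gives $\Vdash \dot{\mu}_\mathcal{U}(\dot M) = \dot r(\mu_\infty)$.

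\medskip
For the construction, since $\mathbb{M}_\kappa$ is ccc each $M_n(k)$ depends on only countably many coordinates, so the entire countable family $(M_n)$ is supported on some countable $S \subseteq \kappa$; passing to the subalgebra $\mathbb{M}_S \cong \mathbb{M}_\omega$ we may assume $\kappa = \omega$. Fix a countable $\pi$-system $\{B_j : j \in \omega\}$ generating $\mathrm{Bor}(2^\omega)$ (e.g., the basic clopens). For each $n, j$ the set
\[
A_{n,j} = \bigl\{ k \in \omega : \bigl|\lambda_\omega(M_n(k) \wedge B_j) - \mu_{M_n}(B_j)\bigr| < 1/n \bigr\}
\]
lies in $\mathcal{U}$ by the very definition of $\mu_{M_n}$. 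Using that $\mathcal{U}$ is a P-point, pick $A \in \mathcal{U}$ with $A \subseteq^* A_{n,j}$ for all $n, j$, and let $g(n,j) \in \omega$ satisfy $A \setminus g(n,j) \subseteq A_{n,j}$. Define
\[
n(k) = \max\bigl\{ n \le k : g(n',j) \le k \text{ for all } n', j \le n \bigr\},
\]
so $n(k) \to \infty$ (finiteness of each $g(n',j)$ gives $n(k) \ge N$ for all $k$ past a finite threshold depending on $N$), and set $M(k) = M_{n(k)}(k)$ for $k \in A$ and $M(k) = 0$ for $k \notin A$.

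\medskip
Clause (i): for fixed $n$ and all sufficiently large $k \in A$, $n(k) \ge n$ forces $M(k) \le M_n(k)$; off $A$ trivially $M(k) = 0 \le M_n(k)$; hence $M \le^* M_n$. Clause (ii): fix $B_j$. For all large $k \in A$, $n(k) \ge j$, and the definition of $n(k)$ ensures $g(n(k),j) \le k$, whence $k \in A_{n(k),j}$ and
\[
\bigl| \lambda_\omega(M(k) \wedge B_j) - \mu_{M_{n(k)}}(B_j) \bigr| < 1/n(k);
\]
as $k \to \infty$ in $A$, both $1/n(k) \to 0$ and $\mu_{M_{n(k)}}(B_j) \to \mu_\infty(B_j)$. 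Since $A \in \mathcal{U}$, this yields $\mu_M(B_j) = \mu_\infty(B_j)$. The generating property of $\{B_j\}$ and the standard uniqueness of finite Borel measures agreeing on a $\pi$-system then upgrade this to $\mu_M = \mu_\infty$ on all of $\mathrm{Bor}(2^\omega)$, completing the argument.

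\medskip
The main obstacle is clause (ii): a \emph{single} coordinatewise choice of $M$ must reproduce $\mu_\infty$ on \emph{every} Borel test set simultaneously. A naive diagonal like $M(k) = M_k(k)$ only controls at most one set at a time. The P-point hypothesis is used precisely here, supplying a single $A \in \mathcal{U}$ that pseudointersects the countable array $\{A_{n,j}\}_{n,j}$ and thus licenses a uniform choice of $n(k)$ that tracks the $\mathcal{U}$-limit against every $B_j$ at once.
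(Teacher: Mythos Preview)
Your proof is correct and follows a genuinely different route from the paper. The paper argues by contradiction: via Lemma~\ref{epsilon} it fixes a rational $\varepsilon$-gap below some condition $q$, then invokes Lemma~\ref{AP1}, which only produces a pseudointersection $M$ matching the \emph{total mass} $\nu(M)=\mu_M(1)=\lim_n\mu_{M_n}(1)$; the contradiction is then extracted by integrating the forced pointwise inequality $\dot\mu_\mathcal{U}(\dot M)<\dot\mu_\mathcal{U}(\dot M_n)-\varepsilon$ over $q$. You instead construct the witness directly and prove the stronger statement $\mu_M=\mu_\infty$ on \emph{all} Borel sets, so that the Radon--Nikodym derivatives coincide $\lambda_\kappa$-a.e.\ and $\Vdash\dot\mu_\mathcal{U}(\dot M)=\lim_n\dot\mu_\mathcal{U}(\dot M_n)$ follows at once, with no $\varepsilon$ and no integration. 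The price you pay is the reduction to countable support (to obtain a countable generating $\pi$-system against which to diagonalize) and a slightly more elaborate diagonal indexed by pairs $(n,j)$; the payoff is that you avoid Lemma~\ref{epsilon} and the integration detour entirely, and you make transparent why the P-point hypothesis is exactly what is needed---a single $A\in\mathcal{U}$ pseudointersecting the whole array $\{A_{n,j}\}$ is what lets one choice of $n(k)$ track the $\mathcal{U}$-limit against every $B_j$ simultaneously. Two small points worth tightening in a write-up: the phrase ``we may assume $p=1$'' should be read as ``work with the globally decreasing sequence $M_n\wedge\vec p$ and transfer conclusions back below $p$ via Lemma~\ref{restr_incl}(2)''; and the passage from $\mu_M=\mu_\infty$ on $\mathbb{M}_S$ to $\mu_M=\mu_\infty$ on $\mathbb{M}_\kappa$ uses that both Radon--Nikodym derivatives are $\mathbb{M}_S$-measurable, which you use implicitly but do not state.
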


Since there are P-points under the assumption of the Continuum Hypothesis, the following result follows.

\begin{cor}\label{cor:random_ap}
	In the random model, obtained by forcing with $\mathbb{M}_{\omega_2}$ over a model satisfying $\mathsf{CH}$, there is a P-measure.%
\end{cor}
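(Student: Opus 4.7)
The plan is to assume toward contradiction that some $p\in\mathbb{M}_\kappa$ forces $\dot{\mu}_\mathcal{U}$ not to be a P-measure. Applying Lemma \ref{epsilon} in $V^{\mathbb{M}_\kappa}$, we may strengthen $p$ and find a rational $\varepsilon>0$ and names $(\dot A_n)$ such that $p\Vdash_{\mathbb{M}_\kappa}$``$(\dot A_n)$ is decreasing and $\dot\mu_\mathcal{U}(A)<\lim_n\dot\mu_\mathcal{U}(\dot A_n)-\varepsilon$ for every pseudointersection $A$ of $(\dot A_n)$''. Via Lemma \ref{names} and Lemma \ref{restr_incl} we identify each $\dot A_n$ with an element $M_n\in\mathbb{M}_\kappa^\omega$ satisfying $M_n\le\vec p$ and $M_{n+1}\le M_n$ pointwise in $V$. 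Then $\mu_{M_{n+1}}\le\mu_{M_n}$ as $\sigma$-additive measures on $\mathbb{M}_\kappa$ absolutely continuous with respect to $\lambda_\kappa$, so their Radon--Nikodym densities $f_n\colon 2^\kappa\to[0,1]$ may be chosen decreasing pointwise to a Borel limit $f$, and monotone convergence gives $\int_B f\,d\lambda_\kappa=\lim_n\mu_{M_n}(B)$ for every Borel $B$. By Definition \ref{names-as-measures} we have $\Vdash_{\mathbb{M}_\kappa}\dot\mu_\mathcal{U}(\dot A_n)=f_n(\dot r)$ for each $n$, hence $\Vdash_{\mathbb{M}_\kappa}\lim_n\dot\mu_\mathcal{U}(\dot A_n)=f(\dot r)$.

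It thus suffices to construct, in $V$, an $N\in\mathbb{M}_\kappa^\omega$ with $N\le^* M_n$ for every $n$ and $\mu_N(B)=\int_B f\,d\lambda_\kappa$ for every Borel $B$: Lemma \ref{almost} will then yield $\Vdash_{\mathbb{M}_\kappa}\dot N\subseteq^*\dot M_n$, while Definition \ref{names-as-measures} will give $\Vdash_{\mathbb{M}_\kappa}\dot\mu_\mathcal{U}(\dot N)=f(\dot r)=\lim_n\dot\mu_\mathcal{U}(\dot A_n)$, contradicting the slack $\varepsilon$. The countably many Borel sets $M_n(k)$ jointly depend on only countably many coordinates of $2^\kappa$, so we may work as if $\kappa=\omega$ and use the cylinder partitions $\mathcal{P}_\ell=\{[s]\colon s\in 2^\ell\}$ of $2^\omega$. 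For every $n,\ell$ the definition $\mu_{M_n}([s])=\lim_{k\to\mathcal{U}}\lambda_\kappa(M_n(k)\cap[s])$ together with finiteness of $2^\ell$ lets us pick $U_{n,\ell}\in\mathcal{U}$ satisfying
\[ \bigl|\lambda_\kappa(M_n(k)\cap[s])-\mu_{M_n}([s])\bigr|<2^{-\ell}/\ell\quad\text{for every } k\in U_{n,\ell}\text{ and } s\in 2^\ell. \]

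This is where the P-point hypothesis enters: pick $U\in\mathcal{U}$ with $U\subseteq^* U_{n,\ell}$ for all $n,\ell$, enumerate $U=\{k_0<k_1<\cdots\}$, and let $n(i)$ be the largest $n\le i$ such that $k_i\in U_{n',\ell'}$ for every $n',\ell'\le n$; a routine check gives $n(i)\to\infty$. Define $N$ by $N(k_i)=M_{n(i)}(k_i)$ for $k_i\in U$ and $N(k)=0$ otherwise. Then $\{k\colon N(k)\not\le M_n(k)\}\subseteq\{k_i\colon n(i)<n\}$ is finite for each $n$, so $N\le^* M_n$. For the measure identity, fix a cylinder $[s]$ with $s\in 2^{\ell_0}$; for every $\ell\ge\ell_0$ and every $i$ with $n(i)\ge\ell$, decomposing $[s]$ into its $2^{\ell-\ell_0}$ refinements of length $\ell$ yields $\bigl|\lambda_\kappa(M_{n(i)}(k_i)\cap[s])-\mu_{M_{n(i)}}([s])\bigr|<1/(\ell\cdot 2^{\ell_0})$, which combined with $\mu_{M_{n(i)}}([s])\to\int_{[s]}f\,d\lambda_\kappa$ (as $n(i)\to\infty$, by monotone convergence) produces $\mu_N([s])=\int_{[s]}f\,d\lambda_\kappa$; this extends to all Borel sets by $\sigma$-additivity. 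The main obstacle is calibrating the tolerance $2^{-\ell}/\ell$: a naive choice $1/\ell$ would be multiplied by $2^{\ell-\ell_0}$ under refinement and blow up, so exponential decay is essential, after which the P-point supplies a single $U$ consolidating all approximations and the assignment $i\mapsto n(i)$ simultaneously sharpens them while driving $\mu_{M_{n(i)}}$ toward $f\,d\lambda_\kappa$.
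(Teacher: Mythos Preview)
Your argument is correct. It differs from the paper's route in one substantive way. The paper proves a weaker intermediate statement (Lemma~\ref{AP1}): given a decreasing sequence $(M_n)$ in $\mathbb{M}_\kappa^\omega$, one finds $M\le^* M_n$ matching only the \emph{total mass}, namely $\nu(M)=\lim_n\nu(M_n)$ where $\nu(M)=\mu_M(1)=\lim_{k\to\mathcal{U}}\lambda_\kappa(M(k))$. The contradiction is then obtained indirectly: from $q\Vdash\dot\mu_\mathcal{U}(\dot M)<\dot\mu_\mathcal{U}(\dot M_n)-\varepsilon$ one integrates the corresponding inequality of Radon--Nikodym derivatives over $q$ to get $\nu(M\wedge\vec q)<\nu(M_n\wedge\vec q)-\varepsilon\lambda_\kappa(q)$, contradicting the lemma. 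You instead construct $N\le^* M_n$ with $\mu_N=\lim_n\mu_{M_n}$ as measures on \emph{every} Borel set, so that $f_N=f$ a.e.\ and hence $\Vdash\dot\mu_\mathcal{U}(\dot N)=\lim_n\dot\mu_\mathcal{U}(\dot A_n)$ outright; the contradiction is then immediate. Your approach buys a cleaner endgame and a stronger intermediate lemma, at the cost of a more delicate construction (you must control $\lambda_\kappa(M_n(k)\cap[s])$ on all cylinders simultaneously, which is why the tolerance $2^{-\ell}/\ell$ is needed). The paper's approach is more economical: matching only the total mass requires tracking a single real per $n$, and the integration trick absorbs the condition $q$ painlessly. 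Both arguments invoke the P-point hypothesis in the same way, to produce a single $U\in\mathcal{U}$ almost contained in a countable family of $\mathcal{U}$-sets.

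One small point worth making explicit in your write-up: after verifying $\mu_N([s])=\int_{[s]}f\,d\lambda_\kappa$ for all cylinders $[s]$ in the countable set $S$ of relevant coordinates, the extension ``to all Borel sets by $\sigma$-additivity'' literally only gives agreement on the sub-$\sigma$-algebra $\mathcal{F}_S$ generated by these cylinders. To conclude $f_N=f$ $\lambda_\kappa$-a.e.\ on all of $2^\kappa$ you use that both $f_N$ and $f$ are $\mathcal{F}_S$-measurable (since each $N(k)$ and $M_n(k)$ depends only on $S$-coordinates, $\mu_N$ and each $\mu_{M_n}$ factor as product measures through the projection to $2^S$). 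This is implicit in ``work as if $\kappa=\omega$'', but spelling it out would remove any doubt.
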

By the discussion after Proposition \ref{rho_extends} we get also that $\Vdash_{\mathbb{M}_\kappa}$``$\dot{\mu}_\mathcal{U}$ does not extend the asymptotic density''. 

Recall that the support of a P-measure on $\omega$ is a nowhere dense ccc closed P-set (see Lemma \ref{lem:basic}).

\begin{cor}\label{cor:random_ap2}
In the random model, there is a nowhere dense ccc closed P-set in $\omega^*$.
\end{cor}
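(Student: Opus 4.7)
The plan is to obtain the required P-set as the support of the P-measure supplied by Corollary \ref{cor:random_ap}, and to verify the four properties (closed, nowhere dense, ccc, P-set) one at a time. Let $V$ be a model of $\mathsf{CH}$, let $G$ be $\mathbb{M}_{\omega_2}$-generic over $V$, and work in $V[G]$. By Corollary \ref{cor:random_ap} there is a P-measure $\mu$ on $\omega$. Since $\mu$ vanishes on points, it induces a measure on $\wp(\omega)/Fin$, and hence a Radon measure $\bar{\mu}$ on $\omega^*$ with $\bar{\mu}([A]_{\wp(\omega)/Fin}) = \mu(A)$ for every $A\in\wp(\omega)$. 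Let $F = \supp(\bar{\mu})$. Being the support of a Radon measure, $F$ is automatically closed, and by Lemma \ref{lem:basic}.(2) it is nowhere dense.

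For the ccc property I would argue that $\bar{\mu}\clrest F$ is strictly positive on $F$: any nonempty open $U$ in $F$ has the form $V\cap F$ for some open $V\subseteq\omega^*$, and since $V$ meets the support one has $\bar{\mu}(U)=\bar{\mu}(V)>0$. A space admitting a strictly positive finite Borel measure is ccc (an uncountable disjoint family of opens would contain, by pigeonhole, uncountably many of measure above some fixed $1/n$, contradicting finiteness of $\bar{\mu}$).

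The key step, as I see it, is the P-set property; the others are essentially bookkeeping. Let $(U_n)_{n\in\omega}$ be a sequence of open subsets of $\omega^*$ each containing $F$. I first reduce to the clopen case: for each $n$, using that $F$ is compact, that $\omega^*$ has a clopen base, and that $F\subseteq U_n$, I cover $F$ by finitely many clopens contained in $U_n$ and take their union $[A_n]$, so that $F\subseteq [A_n]\subseteq U_n$ with $A_n\in\wp(\omega)$. Since $F=\supp(\bar{\mu})$ and $\omega^*\setminus[A_n]$ is disjoint from $F$, we get $\mu(\omega\setminus A_n)=\bar{\mu}(\omega^*\setminus[A_n])=0$, hence $\mu(A_n)=1$ for every $n$. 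Replacing $A_n$ by $\bigcap_{k\le n}A_k$ (which does not affect $\mu(A_n)=1$), I may assume the sequence $(A_n)$ is decreasing. Now I invoke the P-measure property of $\mu$: there exists $B\subseteq\omega$ with $B\subseteq^* A_n$ for every $n$ and $\mu(B)=\lim_n \mu(A_n)=1$. Then $\mu(\omega\setminus B)=0$ yields $F\subseteq[B]$ (as a clopen disjoint from $[B]$ has measure zero, hence misses the support), while $B\subseteq^*A_n$ yields $[B]\subseteq[A_n]\subseteq U_n$ in $\omega^*$ for every $n$. Thus $F\subseteq [B]\subseteq\bigcap_n U_n$ with $[B]$ open, showing $F$ lies in the interior of $\bigcap_n U_n$, as required.
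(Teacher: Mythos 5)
Your proposal is correct and follows exactly the route the paper takes: the paper simply remarks that the support of a P-measure is a nowhere dense ccc closed P-set (citing Lemma \ref{lem:basic}) and combines this with Corollary \ref{cor:random_ap}, whereas you supply the full verification of that folklore fact. All four verifications (closedness, nowhere density via Lemma \ref{lem:basic}.(2), ccc via strict positivity of the restricted measure, and the P-set property via reduction to a decreasing sequence of clopen neighbourhoods of full measure and the P-measure property) are sound.
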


To prove Theorem \ref{thm:random_pp_ap} we will need a lemma.

\begin{lem}\label{AP1} Suppose $\mathcal{U}$ is a P-point. Let $\nu$ be the measure on $\mathbb{M}^\omega_\kappa$ defined for every $M\in\mathbb{M}_\kappa^\omega$ by the formula $\nu(M) = \lim_{i \to \mathcal{U}} \lambda_\kappa(M(i))$. Then, $\nu$  satisfies the following
	property: whenever $(M_n)$ is a sequence from $\mathbb{M}^\omega_\kappa$ such that $M_{n+1} \le M_n$ for each $n\in\omega$, then there is $M\in \mathbb{M}^\omega_\kappa$ such that
	\begin{enumerate}
		\item $M\le^*M_n$ for each $n\in\omega$, and
		\item $\nu(M) = \lim_{n\to\infty} \nu(M_n)$.
	\end{enumerate}
\end{lem}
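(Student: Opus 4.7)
The plan is to combine the P-point property of $\mathcal{U}$ with a standard diagonal construction: we pick for each $n$ a $\mathcal{U}$-large set on which $\lambda_\kappa(M_n(i))$ is close to $\nu(M_n)$, take a pseudointersection in $\mathcal{U}$, and then glue the $M_n$'s together along this pseudointersection with a slowly growing index.

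\textbf{Step 1: Set up.} First note that since $M_{n+1} \le M_n$ we have $\lambda_\kappa(M_{n+1}(i)) \le \lambda_\kappa(M_n(i))$ for every $i$, hence $\nu(M_{n+1}) \le \nu(M_n)$. So $\alpha := \lim_{n\to\infty} \nu(M_n)$ exists in $[0,1]$. For each $n\in\omega$, the definition of the $\mathcal{U}$-limit gives a set $A_n \in \mathcal{U}$ with $|\lambda_\kappa(M_n(i)) - \nu(M_n)| < 2^{-n}$ for every $i \in A_n$. Replacing $A_n$ with $\bigcap_{m\le n} A_m$, we may assume $(A_n)$ is decreasing.

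\textbf{Step 2: Use the P-point.} Since $\mathcal{U}$ is a P-point, pick $A \in \mathcal{U}$ with $A \subseteq^* A_n$ for every $n$. Choose a strictly increasing sequence $k_0 < k_1 < k_2 < \dots$ in $\omega$ with $A \setminus k_n \subseteq A_n$ for every $n$, and define
\[ n(i) = \max\{m \in \omega : k_m \le i\} \]
for $i \ge k_0$, with $n(i) = 0$ otherwise. Then $n(i) \to \infty$ and, crucially, $i \in A_{n(i)}$ whenever $i \in A \setminus k_0$.

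\textbf{Step 3: Define $M$ and check the pseudointersection property.} Put
\[ M(i) = \begin{cases} M_{n(i)}(i) & \text{if } i \in A, \\ 0 & \text{if } i \notin A. \end{cases} \]
This defines an element of $\mathbb{M}_\kappa^\omega$. Fix $n$. For all sufficiently large $i$ we have $n(i) \ge n$; for such $i$, if $i \in A$ then $M(i) = M_{n(i)}(i) \le M_n(i)$ by the decreasing property of the sequence, and if $i \notin A$ then $M(i) = 0 \le M_n(i)$ trivially. Hence $M \le^* M_n$.

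\textbf{Step 4: Compute $\nu(M)$.} For $i \in A \setminus k_0$ we have $i \in A_{n(i)}$, and therefore
\[ |\lambda_\kappa(M(i)) - \alpha| \le |\lambda_\kappa(M_{n(i)}(i)) - \nu(M_{n(i)})| + |\nu(M_{n(i)}) - \alpha| < 2^{-n(i)} + |\nu(M_{n(i)}) - \alpha|. \]
Both terms tend to $0$ as $i\to\infty$ along $A$, because $n(i) \to \infty$ and $\nu(M_m) \to \alpha$. Since $A \in \mathcal{U}$, this forces $\lim_{i \to \mathcal{U}} \lambda_\kappa(M(i)) = \alpha$, that is, $\nu(M) = \alpha = \lim_{n\to\infty} \nu(M_n)$.

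The only real content is the coordination in Step 2–Step 3: choosing the indexing function $n(i)$ so that simultaneously (a) $n(i) \to \infty$ (needed for $M \le^* M_n$), and (b) on the P-point pseudointersection $A$ one still has $i \in A_{n(i)}$ (needed to control $\lambda_\kappa(M(i))$). The sequence $(k_n)$ extracted from $A \subseteq^* A_n$ is exactly the device that reconciles these two requirements, and this is where the P-point hypothesis is used in an essential way.
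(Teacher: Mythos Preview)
Your proof is correct and follows essentially the same approach as the paper's: both pick approximation sets $A_n\in\mathcal{U}$ on which $\lambda_\kappa(M_n(i))$ is close to $\nu(M_n)$, pass to a P-point pseudointersection, and diagonalize via an index function $n(i)\to\infty$ chosen so that $i$ still lies in $A_{n(i)}$. Your choice to set $M(i)=0$ for $i\notin A$ makes the verification of $\nu(M)=\alpha$ direct, whereas the paper defines $M(k)=M_{n_k}(k)$ on all of $\omega$ and then finishes by a short contradiction argument; this is a cosmetic difference, not a substantive one.
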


\begin{proof}
	Let $(M_n)$ be a decreasing sequence in $\mathbb{M}^\omega_\kappa$.
	For each $n\in\omega$ let $r_n = \nu(M_n)$ and put  
	\[ U_n = \big\{k\in\omega\colon\ |\lambda_\kappa(M_n(k)) - r_n| < 1/n \big\}.\]
	Notice that $U_n \in \mathcal{U}$ for every $n\in\omega$.

	Since $\mathcal{U}$ is a P-point, there is $U\in \mathcal{U}$ such that $U\subseteq^* U_n$ for every $n\in\omega$. It follows that for every $n\in\omega$ there is $l_n\in U$ such that $U\setminus\{0,\ldots,l_n-1\}\in U_n$. In particular, $l_n\in U_n$. Increasing sufficiently each $l_n$, we may assume that $l_{n+1}>l_n\ge n$ for every $n\in\omega$.

For each $k<l_0$ set $M(k)=1$. For each $k\ge l_0$ there is $n_k\in\omega$ such that $l_{n_k}\le k<l_{n_k+1}$, so we may set $M(k)=M_{n_k}(k)$.

We will check that the sequence $M=(M(k))$ is as desired. 

Since the sequence $(M_n)$ is decreasing, for each $n\in\omega$ and $k\ge l_n$ we have $M(k)\le M_n(k)$, so (1) holds. We claim that (2) holds, too, that is, that $\nu(M)=\lim_{n\to\infty}\nu(M_n)$. Let $r=\nu(M)$. By (1), for every $n\in\omega$ we have:
\[\nu(M_n)=\lim_{k\to\mathcal{U}}\lambda_\kappa(M_n(k))\ge\lim_{k\to\mathcal{U}}\lambda_\kappa(M(k))=\nu(M),\]
which yields that $r_n\ge r$. It follows that $\lim_{n\to\infty}r_n\ge r$ (note that the sequence $(r_n)$ is decreasing). Assume that there exists $\varepsilon>0$ such that $r_n>r+\varepsilon$ for every $n\in\omega$. Put
\[A=\big\{k\in\omega\colon\ |\lambda_\kappa(M(k))-r|<\varepsilon/2\big\}.\]
Of course, $A\in\mathcal{U}$, so $B=A\cap U\in\mathcal{U}$. Let $k\in B$ be sufficiently large, so that there exists $n_k\in\omega$ such that $l_{n_k}\le k<l_{n_k+1}$ and $1/n_k<\varepsilon/2$. It follows that $k\in U_{n_k}$, so 
\[|\lambda_\kappa(M_{n_k}(k))-r_{n_k}|<1/n_k<\varepsilon/2,\]
hence
\[\tag{$*$}\lambda_\kappa(M_{n_k}(k))>r_{n_k}-\varepsilon/2>r+\varepsilon/2.\]
On the other hand, since $k\in A$, we also have that
\[|\lambda_\kappa(M_{n_k}(k))-r|<\varepsilon/2,\]
so
\[\tag{$**$}\lambda_\kappa(M_{n_k}(k))<r+\varepsilon/2.\]
But then, by ($*$) and ($**$), we get that
\[\lambda_\kappa(M_{n_k}(k))>r+\varepsilon/2>\lambda_\kappa(M_{n_k}(k)),\]
a contradiction. Thus, $\lim_{n\to\infty}r_n=r$, and the proof is finished.
\end{proof}

We are in the position to prove the theorem.

\begin{proof}[Proof of Theorem \ref{thm:random_pp_ap}]
	We work in the ground model $V$. Suppose for the sake of contradiction that there is a condition $p\in\mathbb{M}_\kappa$ forcing that $\dot{\mu}_\mathcal{U}$ is not a P-measure. Then, by Lemma \ref{epsilon}, the Maximum Principle, and Lemma \ref{names}, there is a condition $q\le p$, a rational $\varepsilon>0$, and a sequence $(M_n)$ of elements of $\mathbb{M}_\kappa^\omega$ such that
	\[ q \Vdash ``(\dot{M}_n)\mbox{ is a decreasing sequence of subsets of }\omega",\]
and
	\[ q \Vdash``\mbox{if }\dot{M}\mbox{ is a pseudointersection of }(\dot{M}_n)\mbox{, then } \forall n\in\omega \ \dot{\mu}_\mathcal{U}(\dot{M}) < \dot{\mu}_\mathcal{U}(\dot{M}_n)-\varepsilon". \]
	For each $n\in\omega$ define $N_n\in \mathbb{M}^\omega_\kappa$ by $N_n=M_n\wedge\vec{q}$. Then, by Lemma \ref{restr_incl}.(1), the sequence $(N_n)$ is decreasing and we can apply Lemma \ref{AP1} to find $M\in \mathbb{M}^\omega_\kappa$ such that
	\begin{enumerate}
		\item $M\le^* N_n$ for each $n\in\omega$, and
		\item $\nu(M) = \lim_{n\to\infty}\nu(N_n)$, where the measure $\nu$ is defined as in Lemma \ref{AP1}.
	\end{enumerate}

	Since, by Lemmas \ref{almost} and \ref{restr_incl}.(2), $q \Vdash ``\dot{M}\mbox{ is a pseudointersection of }(\dot{M}_n)"$, for every $n\in\omega$ we have:
	\[ q \Vdash \dot{\mu}_\mathcal{U}(\dot{M}) < \dot{\mu}_\mathcal{U}(\dot{M}_n) - \varepsilon. \]
	Let $f,f_n\colon 2^\kappa \to \mathbb{R}$ be Borel functions associated to $\mu_{M}$ and $\mu_{M_n}$, $n\in\omega$, via the Radon--Nikodym theorem (see Definition \ref{names-as-measures}). 
 	Then, for every $n\in\omega$ we have	
	\[\tag{$*$} f < f_n - \varepsilon \quad\mbox{almost everywhere on }q, \]
	which follows from Lemma \ref{f-name}.(1) (applied to the function $f-f_n$ and the set $A=(-\infty,-\eps)$).

Integrating both sides of inequality ($*$) (over $q$), we get for every $n\in\omega$ that
	\[ \int_q f \ d\lambda_\kappa < \int_q f_n \ d\lambda_\kappa - \varepsilon \lambda_\kappa(q).\]
	Then, by the definitions of $\mu_{M}$ and $\mu_{M_n}$, we have:
   \[ \mu_{M}(q) < \mu_{M_n}(q) - \varepsilon \lambda_\kappa(q), \]
	and so
	 \[ \nu(M \wedge \vec{q}) < \nu(M_n \wedge \vec{q}) - \varepsilon \lambda_\kappa(q). \]
	Since for each $n\in\omega$ we have $N_n = M_n \wedge \vec{q}$ and $M \le^* N_n\le\vec{q}$, it holds
	\[ \nu(M) < \nu(N_n) - \varepsilon\lambda_\kappa(q),\]
	for every $n\in\omega$, which contradicts (2) above.
\end{proof}

The converse to Theorem \ref{thm:random_pp_ap} also holds.

\begin{thm}\label{pmeasure_extends_ppoint} Let $\mathcal{U}$ be a ground model ultrafilter on $\omega$ such that $\Vdash_{\mathbb{M}_\kappa}$``$\dot{\mu}_{\mathcal{U}}$ is a P-measure''. Then, $\mathcal{U}$ is a P-point in $V$.
\end{thm}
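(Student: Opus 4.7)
The plan is an argument by contradiction. Suppose $\mathcal{U}$ is not a P-point in $V$, and fix a decreasing sequence $(A_n)$ of elements of $\mathcal{U}$ with no pseudointersection in $\mathcal{U}$. By Proposition \ref{rho_extends}, $\Vdash \dot{\mu}_\mathcal{U}(\check{A}_n) = 1$ for each $n$, so by the hypothesis that $\dot{\mu}_\mathcal{U}$ is a P-measure, applied to the decreasing sequence $(\check{A}_n)$, together with Lemma \ref{names} and the Maximum Principle, we obtain $M \in \mathbb{M}_\kappa^\omega$ such that $\Vdash \dot{M} \subseteq^* \check{A}_n$ for every $n$ and $\Vdash \dot{\mu}_\mathcal{U}(\dot{M}) = 1$. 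The strategy is then to use $M$ to extract a ground-model pseudointersection of $(A_n)$ lying in $\mathcal{U}$, contradicting the choice of $(A_n)$.

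The key step is to recode the forcing-theoretic information carried by $M$ as measure-theoretic data about the ground-model function $g\colon\omega\to[0,1]$, $g(k) = \lambda_\kappa(M(k))$. By Lemma \ref{almost}, the assertion $\Vdash \dot{M} \subseteq^* \check{A}_n$ is equivalent to the identity $\bigwedge_{l} \bigvee_{k>l,\, k \notin A_n} M(k) = 0$ in $\mathbb{M}_\kappa$; since $\mathbb{M}_\kappa$ is a complete Boolean algebra with $\sigma$-additive measure $\lambda_\kappa$, applied to a decreasing sequence this yields $\lim_l \lambda_\kappa\bigl(\bigvee_{k>l,\, k \notin A_n} M(k)\bigr) = 0$, so in particular $g(k) \to 0$ as $k\to\infty$ along $\omega \setminus A_n$ for each $n$. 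For the second assertion, Definition \ref{names-as-measures} gives $\Vdash \dot{\mu}_\mathcal{U}(\dot{M}) = f_M(\dot{r})$, where $f_M$ is the Radon--Nikodym derivative of $\mu_M$ with respect to $\lambda_\kappa$; then Lemma \ref{f-name}.(2) turns $\Vdash \dot{\mu}_\mathcal{U}(\dot{M}) = 1$ into $f_M = 1$ $\lambda_\kappa$-a.e., i.e.\ $\mu_M = \lambda_\kappa$, and evaluating at the unit of $\mathbb{M}_\kappa$ gives $\lim_{k \to \mathcal{U}} g(k) = \mu_M(1) = 1$.

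With these two facts in hand, set $D = \{k \in \omega : g(k) > 1/2\}$, which is a ground-model subset of $\omega$. The ultrafilter limit $\lim_{k \to \mathcal{U}} g(k)=1$ places $D$ in $\mathcal{U}$, while the convergence $g(k)\to 0$ along each $\omega \setminus A_n$ makes $D \cap (\omega \setminus A_n)$ finite, so $D \subseteq^* A_n$ for every $n$. Thus $D$ is a ground-model pseudointersection of $(A_n)$ lying in $\mathcal{U}$, contradicting the choice of $(A_n)$; hence $\mathcal{U}$ is a P-point. The main technical obstacle is the translation step: one must first represent the extension-level pseudointersection as an element of $\mathbb{M}_\kappa^\omega$, and then simultaneously convert the two forcing statements into genuine $V$-level convergence statements about the single function $g$. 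Once this translation is made, any threshold strictly between $0$ and $1$ yields the desired $D$ by elementary combinatorics.
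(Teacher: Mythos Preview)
Your proof is correct, and it takes a genuinely different route from the paper's.

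The paper proceeds by working inside each block $A_n\setminus A_{n+1}$: from $\Vdash$``$\dot{M}\cap(A_n\setminus A_{n+1})$ is finite'' it extracts finite sets $F_n\subseteq A_n\setminus A_{n+1}$ with $\lambda_\kappa\big(\bigvee_{k\in(A_n\setminus A_{n+1})\setminus F_n}M(k)\big)<1/2^{n+2}$, sets $A=\bigcup_n F_n$ (a ground-model pseudointersection, hence $A\notin\mathcal{U}$), and then finds a nonzero condition $q=1\setminus\bigvee_{k\in A^c}M(k)$ which forces $\dot{M}\subseteq A$; this yields $q\Vdash\dot{\mu}_\mathcal{U}(A)=1$ together with $\Vdash\dot{\mu}_\mathcal{U}(A^c)=1$, a contradiction at the level of a specific condition.

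You instead compress everything into the scalar function $g(k)=\lambda_\kappa(M(k))$ and translate both forcing statements into convergence facts about $g$: $g(k)\to 0$ along each $\omega\setminus A_n$ (from Lemma~\ref{almost} and $\sigma$-additivity of $\lambda_\kappa$), and $\lim_{k\to\mathcal{U}}g(k)=1$ (from Lemma~\ref{f-name}.(2) applied with $p=1$, giving $\mu_M=\lambda_\kappa$). The threshold set $D=\{k:g(k)>1/2\}$ is then visibly a ground-model pseudointersection in $\mathcal{U}$. This is shorter and more conceptual: it avoids any condition-by-condition bookkeeping and produces the desired set in one stroke. The paper's argument, by contrast, stays closer to the forcing side and mirrors the style of the proof of Theorem~\ref{thm:random_pp_ap}, deriving the contradiction from a particular $q\in\mathbb{M}_\kappa$ rather than from a global convergence statement.
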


\begin{proof} Suppose for the sake of contradiction that $\mathcal{U}$ is not a P-point in $V$ and let $(A_n)$ be a sequence witnessing it, i.e. $(A_n)$ is a decreasing sequence such that:
	\begin{itemize}
		\item[(a)] $A_0=\omega$ and $A_n \in \mathcal{U}$ for each $n\in\omega$, 
		\item[(b)] for every $A\in\wp(\omega)\cap V$, if $A \setminus A_n$ is finite for each $n\in\omega$, then $A\notin \mathcal{U}$. 
	\end{itemize}
	Since $\Vdash_{\mathbb{M}_\kappa}$``$\dot{\mu}_\mathcal{U}$ is a P-measure'', by (a) and Proposition \ref{rho_extends}, there is $M\in\mathbb{M}_\kappa^\omega$ such that $\Vdash_{\mathbb{M}_\kappa}\dot{\mu}_\mathcal{U}(\dot{M})=1$ and $\Vdash_{\mathbb{M}_\kappa}$``$\dot{M} \setminus A_n$ is finite for each $n\in\omega$''. In particular, for every $n\in\omega$ we have that $\Vdash_{\mathbb{M}_\kappa}$``$\dot{M}\cap(A_n\setminus A_{n+1})$ is finite''. Then, by Lemma \ref{almost},
\[ \lambda_\kappa\Big( \bigwedge_{l\in\omega} \bigvee_{\substack{k\in A_n\setminus A_{n+1}\\k>l}} M(k)\Big) = 0 \]
for each $n\in\omega$.
So, for each $n\in\omega$, there is a finite subset $F_n \subseteq A_n\setminus A_{n+1}$ such that 
\[ \lambda_\kappa\Big(\bigvee_{k\in (A_n\setminus A_{n+1})\setminus F_n} M(k)\Big) < 1/2^{n+2}. \]
Let $A = \bigcup_{n\in\omega} F_n$. Obviously, $A\in V$ and $A\setminus A_n$ is finite for every $n\in\omega$ (since $A\setminus A_n\subseteq\bigcup_{i=0}^{n-1} F_i$), so by (b) we have $A\not\in\mathcal{U}$ and hence $A^c\in\mathcal{U}$. By Proposition \ref{rho_extends}, $\Vdash_{\mathbb{M}_\kappa}\dot{\mu}_\mathcal{U}(A^c)=1$. Let  
\[ p = \bigvee_{k \in A^c} M(k) = \bigvee_{n\in \omega} \bigvee_{k\in(A_n\setminus A_{n+1})\setminus F_n} M(k). \] 
Notice that $\lambda(p)\leq 1/2$ and so, in particular, $p\ne 1$. Hence, setting $q=1\setminus p$, we have $q\neq0$. Then $q\Vdash \dot{M} \cap A^c = \emptyset$, and so $q\Vdash \dot{M} \subseteq A$. Since $\Vdash_{\mathbb{M}_\kappa}\dot{\mu}_\mathcal{U}(\dot{M})=1$, we get that  $q\Vdash\dot{\mu}_\mathcal{U}(A)=1$. But $\Vdash_{\mathbb{M}_\kappa}\dot{\mu}_\mathcal{U}(A^c)=1$, so $q\Vdash\dot{\mu}_\mathcal{U}(A^c)=1$, too, which is a contradiction. 
\end{proof}

\begin{cor}
Let $\mathcal{U}$ be a ground model ultrafilter on $\omega$. Then, $\mathcal{U}$ is a P-point in $V$ if and only if $\Vdash_{\mathbb{M}_\kappa}$``$\dot{\mu}_\mathcal{U}$ is a P-measure''.
\end{cor}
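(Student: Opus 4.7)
The statement is an immediate combination of the two theorems just proved. The plan is simply to invoke them in the two directions of the biconditional.

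For the forward implication, if $\mathcal{U}$ is a P-point in $V$, then Theorem \ref{thm:random_pp_ap} gives exactly $\Vdash_{\mathbb{M}_\kappa}$``$\dot{\mu}_\mathcal{U}$ is a P-measure''. For the reverse implication, if $\Vdash_{\mathbb{M}_\kappa}$``$\dot{\mu}_\mathcal{U}$ is a P-measure'', then Theorem \ref{pmeasure_extends_ppoint} yields that $\mathcal{U}$ is a P-point in $V$.

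Since both directions have already been established, there is no real obstacle here and no further work is required beyond citing the two theorems. In other words, the corollary is just the formal packaging of Theorems \ref{thm:random_pp_ap} and \ref{pmeasure_extends_ppoint} into one equivalence. The proof therefore reduces to a single sentence referring the reader to those two results.
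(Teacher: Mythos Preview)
Your proposal is correct and matches the paper's approach exactly: the corollary is stated without proof immediately after Theorems \ref{thm:random_pp_ap} and \ref{pmeasure_extends_ppoint}, since it is nothing more than the conjunction of those two results.
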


\section{Non-atomicity of Solovay measures}

Recall that each P-point induces a P-measure which is a one-point measure. So, if we could show that the P-measures whose existence we have shown in the previous section can be one-point measures, then we would prove the existence of a P-point
in the random model. Unfortunately, as we will see, all Solovay measures are non-atomic.

Suppose $\mathcal{U}$ is an ultrafilter on $\omega$, in $V$. Let $G$ be an $\mathbb{M}_\kappa$-generic filter over $V$. Then, in $V[G]$, $\mathcal{U}$ generates a filter on $\omega$, and so a closed subset of
$\beta\omega$. Denote this set by $K_\mathcal{U}$. In other words, in $V[G]$,
\[ K_\mathcal{U} = \bigcap_{X\in \mathcal{U}}[X].\]
(Recall, that here $[X]$ is the clopen subset of $\beta\omega$ associated to a subset $X$ of $\omega$, i.e. $[X]=[X]_{\wp(\omega)}$.) Now, still in $V[G]$, the finitely additive Solovay measure $(\dot{\mu}_\mathcal{U})_G$ associated with $\mathcal{U}$ can be uniquely extended to a regular Borel $\sigma$-additive probability measure $\nu_\mathcal{U}$ on $\beta\omega$. Let $\dot{K}_\mathcal{U}$ and $\dot{\nu}_\mathcal{U}$ denote such $\mathbb{M}_\kappa$-names in $V$ that in every $\mathbb{M}_\kappa$-generic extension $V[G]$ we have $(\dot{K}_\mathcal{U})_G=K_\mathcal{U}$ and $(\dot{\nu}_\mathcal{U})_G=\nu_\mathcal{U}$. It appears that each measure of the form $\nu_\mathcal{U}$ is carried by $K_\mathcal{U}$, i.e., that $\supp(\nu_\mathcal{U})\subseteq K_\mathcal{U}$. 

\begin{prop}\label{nuu_ku} For each ultrafilter $\mathcal{U}$ on $\omega$, \[ \Vdash_{\mathbb{M}_\kappa} \dot{\nu}_\mathcal{U}(\dot{K}_\mathcal{U}) = 1.\]
\end{prop}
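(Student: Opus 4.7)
The plan is to work in a fixed $\mathbb{M}_\kappa$-generic extension $V[G]$ and verify that $\nu_\mathcal{U}(K_\mathcal{U}) = 1$, appealing only to the fact that $\nu_\mathcal{U}$ is Radon and that Proposition \ref{rho_extends} pins down the value of $\nu_\mathcal{U}$ on clopens coming from the ground model. Since $K_\mathcal{U}$ is defined by intersecting the (possibly uncountable) family of clopens $\{[X]\colon X\in\mathcal{U}\}$, $\sigma$-additivity is not directly usable; the right tool is continuity of Radon measures along downward-directed families of closed sets.

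First, I would invoke Proposition \ref{rho_extends}: for every $X\in\mathcal{U}$, which is an element of the ground model, we have $\dot{\mu}_\mathcal{U}(X) = 1$ in $V[G]$. As $\nu_\mathcal{U}$ is the canonical Radon extension of the finitely additive $\dot{\mu}_\mathcal{U}$ to $\beta\omega$, this gives $\nu_\mathcal{U}([X]) = 1$ for every $X \in \mathcal{U}$. Since $\mathcal{U}$ is a filter, the family $\mathcal{F} = \{[X]\colon X\in\mathcal{U}\}$ is closed under finite intersections (indeed $[X]\cap[Y] = [X\cap Y]$), hence it is downward-directed with respect to inclusion, and its intersection is precisely $K_\mathcal{U}$.

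Next, I would use the following standard Radon-measure fact, applied to the compact Hausdorff space $\beta\omega$: if $\nu$ is a Radon probability measure on a compact space $L$ and $(F_\alpha)$ is a downward-directed net of closed subsets of $L$, then $\nu(\bigcap_\alpha F_\alpha) = \inf_\alpha \nu(F_\alpha)$. The proof is a short outer-regularity argument: given any open $U \supseteq \bigcap_\alpha F_\alpha$, the compact set $L\setminus U$ is covered by the open sets $L\setminus F_\alpha$, and compactness together with directedness produces a single $\alpha$ with $F_\alpha \subseteq U$, hence $\inf_\alpha \nu(F_\alpha) \le \nu(U)$; infimizing over $U$ yields the claim. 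Applying this to $\nu_\mathcal{U}$ and $\mathcal{F}$ gives
\[
\nu_\mathcal{U}(K_\mathcal{U}) \;=\; \inf_{X\in\mathcal{U}} \nu_\mathcal{U}([X]) \;=\; 1.
\]
Since $G$ was arbitrary, $\Vdash_{\mathbb{M}_\kappa}\dot{\nu}_\mathcal{U}(\dot{K}_\mathcal{U}) = 1$.

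There is essentially no obstacle in this argument; the only subtlety is that $\mathcal{U}$ may be uncountable in $V[G]$, which rules out naive $\sigma$-additivity arguments, but this is handled by the directed-net continuity of Radon measures on compact spaces, together with the observation that $\mathcal{U}$ being a filter makes the family of clopens $\{[X]\colon X\in\mathcal{U}\}$ genuinely directed by reverse inclusion.
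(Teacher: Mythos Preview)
Your argument is correct and follows essentially the same route as the paper's own proof: both use Proposition \ref{rho_extends} to get $\nu_\mathcal{U}([X])=1$ for each $X\in\mathcal{U}$, observe that $\{[X]:X\in\mathcal{U}\}$ is a downward-directed net of closed sets, and then invoke continuity of the Radon measure along this net (the paper phrases this as $\tau$-additivity and cites Bogachev, whereas you supply the short outer-regularity justification directly).
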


\begin{proof}
By Proposition \ref{rho_extends} for every $X\in\mathcal{U}$ we have $\Vdash_{\mathbb{M}_\kappa}\dot{\mu}_\mathcal{U}(\check{X})=1$, so $\Vdash_{\mathbb{M}_\kappa}\dot{\nu}_\mathcal{U}([\check{X}])=1$. The measure $\dot{\nu}_\mathcal{U}$ is a Radon measure in $V^{\mathbb{M}_\kappa}$, hence it is $\tau$-additive (by \cite[Proposition 7.2.2.(i)]{BogMT2}). Since $\{[X]\colon X\in\mathcal{U}\}$ is a decreasing net of closed sets (indexed by the directed set $(\mathcal{U},\supseteq)$), $\Vdash_{\mathbb{M}_\kappa}\dot{\nu}_\mathcal{U}(\dot{K}_\mathcal{U})=1$ (by \cite[Proposition 7.2.5]{BogMT2}).
\end{proof}

We will now show that Solovay measures are always non-atomic, regardless of an associated ground model ultrafilter. This fact will be an immediate consequence of the existence of a family $\mathcal{M}=\{M_s\in\mathbb{M}_\kappa^\omega\colon s\in 2^{<\omega}\}$ having for every $s\in 2^{<\omega}$ the following properties:
\begin{enumerate}
	\item $M_s$ is a disjoint union of $M_{s^\smallfrown0}$ and $M_{s^\smallfrown1}$, and
	\item if $\mathcal{U}$ is a ground model ultrafilter, then $\Vdash_{\mathbb{M}_\kappa}\dot{\mu}_\mathcal{U}(\dot{M}_s)=1/2^{|s|}$.
\end{enumerate}
The definition of $\mathcal{M}$ is as follows. We let $M_\epsilon=\vec{1}$ and for each $s\in 2^{<\omega}$, $|s|>0$, and $k\in\omega$, we put:
\[M_s(k)=\big\{x\in 2^\kappa\colon\ x(k+i)=s(i)\text{ for all }0\le i<|s|\big\}.\]
Note that trivially $\lambda_\kappa(M_s(k))=1/2^{|s|}$. 

Property (1) of $\mathcal{M}$ is obviously satisfied. We will now show that (2) also holds.

\begin{lem}\label{Ms_measure}
Let $\mathcal{U}$ be a ground model ultrafilter. For every $n\in\omega$ and $s\in 2^n$, the Radon--Nikodym derivative $f_{M_s}$ of $\mu_{M_s}$ is $\lambda_\kappa$-a.e. equal to $1/2^n$, and so it holds
\[\Vdash_{\mathbb{M}_\kappa}\dot{\mu}_\mathcal{U}(\dot{M}_s)=1/2^n.\]
\end{lem}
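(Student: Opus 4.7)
The plan is to show directly that $\mu_{M_s}$ equals the scaled Haar measure $(1/2^n)\lambda_\kappa$ on $\mathrm{Bor}(2^\kappa)$; the Radon--Nikodym derivative is then constantly $1/2^n$ up to a null set, and the forcing statement follows from Lemma \ref{f-name}(2) together with the definition of $\dot{r}(\mu_{M_s})$ from Definition \ref{names-as-measures}.

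\textbf{Step 1 (agreement on a generating algebra).} Let $s\in 2^n$. The set $M_s(k)\in\mathrm{Bor}(2^\kappa)$ is, by construction, determined by the finitely many coordinates $\{k,k+1,\dots,k+n-1\}$, and $\lambda_\kappa(M_s(k))=1/2^n$. Let $\mathcal{A}\subseteq\mathrm{Bor}(2^\kappa)$ be the algebra of basic clopen sets, that is, Borel sets depending on finitely many coordinates $F\subseteq\kappa$. For every $B\in\mathcal{A}$ with supporting set $F$, the set $\{k\in\omega\colon\{k,\dots,k+n-1\}\cap F=\emptyset\}$ is cofinite and hence belongs to the non-principal ultrafilter $\mathcal{U}$; for such $k$, $M_s(k)$ and $B$ depend on disjoint blocks of coordinates, so by independence under the product measure
\[\lambda_\kappa(M_s(k)\wedge B)=\lambda_\kappa(M_s(k))\,\lambda_\kappa(B)=\tfrac{1}{2^n}\lambda_\kappa(B).\]
Passing to $\lim_{k\to\mathcal{U}}$ gives $\mu_{M_s}(B)=(1/2^n)\lambda_\kappa(B)$.

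\textbf{Step 2 (extension to all Borel sets).} Both $\mu_{M_s}$ (by the lemma in Section \ref{name-for-measure}) and $(1/2^n)\lambda_\kappa$ are $\sigma$-additive Borel measures on $2^\kappa$, and they coincide on the algebra $\mathcal{A}$, which generates $\mathrm{Bor}(2^\kappa)$. The uniqueness part of the Carathéodory extension theorem therefore gives $\mu_{M_s}=(1/2^n)\lambda_\kappa$ on all of $\mathrm{Bor}(2^\kappa)$.

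\textbf{Step 3 (Radon--Nikodym and the forcing conclusion).} From $\mu_{M_s}=(1/2^n)\lambda_\kappa$ it follows immediately that the constant function $1/2^n$ is a Radon--Nikodym derivative of $\mu_{M_s}$ with respect to $\lambda_\kappa$, so $f_{M_s}=1/2^n$ $\lambda_\kappa$-a.e. By Definition \ref{names-as-measures}, $\dot{r}(\mu_{M_s})$ is the $\mathbb{M}_\kappa$-name forced to equal $f_{M_s}(\dot{r})$, and hence by Lemma \ref{f-name}(2) applied with $y=1/2^n$ we obtain $\Vdash_{\mathbb{M}_\kappa}\dot{r}(\mu_{M_s})=1/2^n$. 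Since, by the definition of $\dot{\mu}_\mathcal{U}$ from Section \ref{name-for-measure}, $\Vdash_{\mathbb{M}_\kappa}\dot{\mu}_\mathcal{U}(\dot{M}_s)=\dot{r}(\mu_{M_s})$, the desired equality $\Vdash_{\mathbb{M}_\kappa}\dot{\mu}_\mathcal{U}(\dot{M}_s)=1/2^n$ follows.

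There is no real obstacle here; the only point requiring a little care is the independence argument in Step 1, where one must use that $\mathcal{U}$ is non-principal so that the cofinitely many ``safe'' values of $k$ form a $\mathcal{U}$-large set, and the identification of $\mathbb{M}_\kappa$-elements with their Borel representatives (which the paper has tacitly been doing throughout the preceding subsections).
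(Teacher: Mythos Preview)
Your proof is correct and follows the same outline as the paper: first verify $\mu_{M_s}(C)=\tfrac{1}{2^n}\lambda_\kappa(C)$ for clopens $C$ via independence of disjoint coordinate blocks, then extend to all Borel sets, and finally read off the Radon--Nikodym derivative. The only substantive difference is in the extension step: the paper approximates an arbitrary Borel $B$ by a clopen $C$ with $\lambda_\kappa(B\triangle C)<\varepsilon$ and runs a direct estimate, whereas you invoke the uniqueness in Carath\'eodory's theorem for two finite $\sigma$-additive measures agreeing on a generating algebra. Your route is a bit slicker; the paper's hands-on approximation, however, actually yields the slightly stronger fact that the ordinary limit $\lim_{k\to\infty}\lambda_\kappa(M_s(k)\wedge B)$ (not just the $\mathcal{U}$-limit) equals $\tfrac{1}{2^n}\lambda_\kappa(B)$, which is reused later in the proof of Theorem~\ref{gm_reals}.
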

\begin{proof}
Let $C\subseteq 2^\kappa$ be a clopen. Then it depends on finitely many coordinates, i.e. there is a finite set $I\subseteq \kappa$ such that for every $x\in C$ and
$y\in 2^\kappa$, if $x(\alpha)=y(\alpha)$ for all $\alpha\in I$
then $y\in C$.

It follows that there is $K\in\omega$ such that for every $k\in\omega$, $k>K$, we have $k\not\in I$. Hence, for every such $k$ it holds $\lambda_\kappa(M_s(k)  \wedge C) = 1/2^n \lambda_\kappa(C)$, and so 
	\[ \lim_{k\to \infty} \lambda_\kappa( M_s(k) \wedge C) = 1/2^n \lambda_\kappa(C).\]

Now, let $B\subseteq 2^\kappa$ be a Borel set. Fix $\varepsilon>0$. There is a clopen $C\subseteq 2^\kappa$ such that $\lambda_\kappa( B\triangle C)<\varepsilon$. Let $K\in\omega$ be as in the previous paragraph, so that $\lambda_\kappa(M_s(k)  \wedge C) = 1/2^n \lambda_\kappa(C)$ for every $k\in\omega$, $k>K$. Then, for every such $k$ we have:
	\[ |\lambda_\kappa( M_s(k) \wedge B) - 1/2^n \lambda_\kappa(B)|\le\]
	\[\le|\lambda_\kappa( M_s(k) \wedge B) - \lambda_\kappa( M_s(k) \wedge C)|+ |\lambda_\kappa( M_s(k) \wedge C) - 1/2^n \lambda_\kappa(B)|\le\]
	\[\le\lambda_\kappa( M_s(k) \wedge (B\triangle C))+|1/2^n\lambda_\kappa(C) - 1/2^n \lambda_\kappa(B)|<\]
	\[<(1 + 1/2^n) \varepsilon.\]
	Since $\varepsilon$ was arbitrary, we get that
	\[ \lim_{k\to \infty} \lambda_\kappa( M_s(k) \wedge B) = 1/2^n \lambda_\kappa(B).\]

	It means that $\mu_{M_s}=1/2^nd\lambda_\kappa$, so the Radon--Nikodym derivative $f_{M_s}$ of $\mu_{M_s}$ is almost everywhere constantly equal to $1/2^n$. Hence, $\Vdash_{\mathbb{M}_\kappa} \dot{\mu}_\mathcal{U}(\dot{M}_s) = 1/2^n$.
\end{proof}

\begin{cor}\label{non-atomic}
For every ultrafilter $\mathcal{U}$ on $\omega$, 
\[\Vdash_{\mathbb{M}_\kappa}``\dot{\mu}_\mathcal{U}\text{ is non-atomic}".\]
\end{cor}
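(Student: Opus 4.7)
The plan is to read off non-atomicity directly from the binary tree family $\mathcal{M}=\{M_s:s\in 2^{<\omega}\}$ that was just constructed. Recall that a finitely additive measure on $\wp(\omega)$ is non-atomic precisely when, for every $\varepsilon>0$, there is a finite partition of $\omega$ into pieces of measure at most $\varepsilon$. So I will argue that, in any $\mathbb{M}_\kappa$-generic extension, for each $n\in\omega$ the family $\{\dot{M}_s:s\in 2^n\}$ is such a partition of $\omega$ with all pieces of $\dot{\mu}_\mathcal{U}$-measure exactly $1/2^n$, and then let $n$ be large.

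First I would check that, for each $n$, the sets $\{\dot{M}_s:s\in 2^n\}$ are forced to partition $\omega$. Since $M_\epsilon=\vec{1}$, Lemma \ref{names_conditions} gives $\Vdash_{\mathbb{M}_\kappa}\dot{M}_\epsilon=\omega$. Property (1) of $\mathcal{M}$ says $M_s=M_{s^\smallfrown 0}\vee M_{s^\smallfrown 1}$ with $M_{s^\smallfrown 0}\wedge M_{s^\smallfrown 1}=0$, so Lemma \ref{homodot} together with Corollary \ref{homodotgm} yields $\Vdash_{\mathbb{M}_\kappa}\dot{M}_s=\dot{M}_{s^\smallfrown 0}\sqcup\dot{M}_{s^\smallfrown 1}$. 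A straightforward induction on $n$ then produces $\Vdash_{\mathbb{M}_\kappa}\omega=\bigsqcup_{s\in 2^n}\dot{M}_s$.

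Next, by Lemma \ref{Ms_measure} we have $\Vdash_{\mathbb{M}_\kappa}\dot{\mu}_\mathcal{U}(\dot{M}_s)=1/2^n$ for every $s\in 2^n$. Fix an arbitrary rational $\varepsilon>0$ and choose $n$ with $1/2^n\le\varepsilon$: the displayed partition of $\omega$ into $2^n$ pieces is then forced to consist of sets of $\dot{\mu}_\mathcal{U}$-measure $\le\varepsilon$. Since $\varepsilon$ was arbitrary, $\Vdash_{\mathbb{M}_\kappa}$``$\dot{\mu}_\mathcal{U}$ is non-atomic''. There is no real obstacle here; all of the work was absorbed into the construction of $\mathcal{M}$ and into Lemma \ref{Ms_measure}, and the only thing to be a little careful about is the passage from the coordinatewise disjointness of the $M_s$ in $\mathbb{M}_\kappa^\omega$ to the forced disjointness of the $\dot{M}_s$ as names for subsets of $\omega$, which is exactly what Corollary \ref{homodotgm} delivers.
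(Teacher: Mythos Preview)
Your argument is correct and is essentially the same as the paper's: use the level-$n$ family $\{\dot{M}_s:s\in 2^n\}$ as a partition of $\omega$ into pieces of $\dot{\mu}_\mathcal{U}$-measure $1/2^n$ (via property (1) and Lemma \ref{Ms_measure}), choosing $n$ so that $1/2^n<\varepsilon$. The paper simply states this more tersely; your added verification that the $\dot{M}_s$ are forced to partition $\omega$ is a nice elaboration but not a different idea.
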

\begin{proof}
Fix $\varepsilon>0$ and let $n\in\omega$ be such that $\varepsilon>1/2^n$. Consider $\{\dot{M}_s\colon\ s\in 2^n\}$ to obtain a desired partition of $\omega$ in $V^{\mathbb{M}_\kappa}$.
\end{proof}

In Lemma \ref{Ms_measure} we showed that, for every $s\in 2^{<\omega}$, the function $f_{M_s}$ is constantly equal to $1/2^{|s|}$ ($\lambda_\kappa$-a.e.). It appears that, exploiting this fact, one can show that for every ground model $\alpha\in[0,1]$ there exists an element $M_\alpha\in\mathbb{M}_\kappa^\omega$ such that the function $f_{M_\alpha}$ is constantly equal to $\alpha$ ($\lambda_\kappa$-a.e) and hence, for every ground model ultrafilter $\mathcal{U}$, $\Vdash_{\mathbb{M}_\kappa}\dot{\mu}_\mathcal{U}(\dot{M}_\alpha)=\alpha$.

We construct elements $M_\alpha$'s as follows. Set $M_0=\check{\emptyset}$ and notice that for every $\alpha\in[0,1]$ there is a (finite or infinite) antichain $I_\alpha\in2^{<\omega}$ such that $\alpha=\sum_{s\in I_\alpha}1/2^{|s|}$ and set for every $k\in\omega$:
\[M_\alpha(k)=\bigvee_{s\in I_\alpha}M_s(k).\]
Note that for each distinct $s,t\in I_\alpha$ we have $M_s(k)\cap M_t(k)=\emptyset$. Additionally, put
\[\dot{\mathcal{M}}_\sigma=\{M_\alpha\colon\ \alpha\in[0,1]\}.\]

\begin{thm}\label{gm_reals}
Let $\mathcal{U}$ be a ground model ultrafilter. For every $\alpha\in[0,1]\cap V$, the Radon--Nikodym derivative $f_{M_\alpha}$ of $\mu_{M_\alpha}$ is $\lambda_\kappa$-a.e. equal to $\alpha$, and so we have:
\[\Vdash_{\mathbb{M}_\kappa}\dot{\mu}_\mathcal{U}(\dot{M}_\alpha)=\alpha.\]
In particular, $\Vdash_{\mathbb{M}_\kappa}\dot{\mu}_\mathcal{U}[\dot{\mathcal{M}}_\sigma]=[0,1]\cap V$.
\end{thm}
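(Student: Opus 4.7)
The plan is to reduce the whole statement to computing the measure $\mu_{M_\alpha}$ on $2^\kappa$. Specifically, if I can verify that $\mu_{M_\alpha} = \alpha \lambda_\kappa$ as $\sigma$-additive measures, then by uniqueness of the Radon--Nikodym derivative, $f_{M_\alpha}$ is the constant function $\alpha$ $\lambda_\kappa$-almost everywhere, and then Definition \ref{names-as-measures} together with Lemma \ref{f-name}.(2) gives $\Vdash_{\mathbb{M}_\kappa} \dot{\mu}_\mathcal{U}(\dot{M}_\alpha) = \dot{r}(\mu_{M_\alpha}) = \alpha$. The ``in particular'' clause is then immediate: $\dot{\mathcal{M}}_\sigma$ is a ground-model family indexed by $[0,1] \cap V$, so its forced $\dot{\mu}_\mathcal{U}$-image is exactly $\{\alpha : \alpha \in [0,1] \cap V\}$.

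So the core task is to prove $\mu_{M_\alpha}(B) = \alpha \lambda_\kappa(B)$ for every Borel $B \subseteq 2^\kappa$. Fix $k \in \omega$. Since $I_\alpha$ is an antichain in $2^{<\omega}$, for any two distinct $s, t \in I_\alpha$ neither is a prefix of the other, so the coordinate constraints defining $M_s(k)$ and $M_t(k)$ are incompatible on their common coordinates. Hence $\{M_s(k) : s \in I_\alpha\}$ is pairwise disjoint, and countable additivity of $\lambda_\kappa$ gives $\lambda_\kappa(M_\alpha(k) \wedge B) = \sum_{s \in I_\alpha} \lambda_\kappa(M_s(k) \wedge B)$. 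When $I_\alpha$ is finite, I would simply take $k \to \mathcal{U}$ on each term and invoke Lemma \ref{Ms_measure} to conclude $\mu_{M_\alpha}(B) = \sum_{s \in I_\alpha} 2^{-|s|} \lambda_\kappa(B) = \alpha \lambda_\kappa(B)$.

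For infinite $I_\alpha$, enumerate $I_\alpha = \{s_0, s_1, \ldots\}$, set $J_n = \{s_0, \ldots, s_{n-1}\}$ and $\alpha_n = \sum_{s \in J_n} 2^{-|s|}$, so that $\alpha_n \nearrow \alpha$. Split the above sum into head $H_n(k) = \sum_{s \in J_n} \lambda_\kappa(M_s(k) \wedge B)$ and tail $T_n(k) = \sum_{s \in I_\alpha \setminus J_n} \lambda_\kappa(M_s(k) \wedge B)$. The tail is bounded uniformly in $k$:
\[
T_n(k) \le \sum_{s \in I_\alpha \setminus J_n} \lambda_\kappa(M_s(k)) = \alpha - \alpha_n,
\]
while the finite head $H_n(k)$ converges to $\alpha_n \lambda_\kappa(B)$ as $k \to \infty$ by Lemma \ref{Ms_measure}. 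By the triangle inequality,
\[
\limsup_{k \to \infty} \bigl|\lambda_\kappa(M_\alpha(k) \wedge B) - \alpha \lambda_\kappa(B)\bigr| \le (\alpha - \alpha_n)\bigl(1 + \lambda_\kappa(B)\bigr),
\]
and sending $n \to \infty$ forces this $\limsup$ to be $0$. Hence $\lambda_\kappa(M_\alpha(k) \wedge B) \to \alpha \lambda_\kappa(B)$ as an ordinary limit, which a fortiori holds along $\mathcal{U}$; so $\mu_{M_\alpha}(B) = \alpha \lambda_\kappa(B)$ as required.

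The main obstacle is the infinite case, where a priori the ultrafilter limit defining $\mu_{M_\alpha}$ need not commute with the infinite sum over $I_\alpha$. The uniform tail bound $\alpha - \alpha_n$, which depends only on the convergence of the expansion $\alpha = \sum_{s \in I_\alpha} 2^{-|s|}$ and not on $k$, is precisely what justifies the interchange and pins down the correct value.
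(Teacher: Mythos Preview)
Your proof is correct and follows essentially the same strategy as the paper: both reduce to showing $\lim_{k\to\infty}\lambda_\kappa(M_\alpha(k)\wedge B)=\alpha\lambda_\kappa(B)$ for arbitrary Borel $B$, using the uniform-in-$k$ bound $\lambda_\kappa(M_s(k))=2^{-|s|}$ to justify interchanging the limit with the infinite sum over $I_\alpha$. The only cosmetic difference is that the paper packages this interchange as an application of Lebesgue's Dominated Convergence Theorem (for the counting measure on $\omega$, with dominating function $g(n)=2^{-|s_n|}$), whereas you write out the equivalent head-plus-uniform-tail $\varepsilon$-argument by hand.
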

\begin{proof}
Fix $\alpha\in[0,1]\cap V$ and a Borel $B\subseteq 2^\kappa$. We will show that
\[\lim_{k\to\infty}\lambda_\kappa(M_\alpha(k)\wedge B)=\alpha\lambda_\kappa(B).\]
For $\alpha=0$ and $M_0$ we are obviously done, so assume that $\alpha>0$. If $I_\alpha$ is finite, then the equality easily follows from the additivity of $\lambda_\kappa$ and $\lim_{k\to\infty}$. So assume that $I_\alpha$ is infinite. Enumerate $I_\alpha=(s_n)$ and for each $n\in\omega$ set $\alpha_n=1/2^{|s_n|}$. Then, $\alpha=\sum_{n\in\omega}\alpha_n$. For each $n,k\in\omega$ let
\[f_k(n)=\lambda_\kappa(M_{s_n}(k)\wedge B).\]
Hence, for each $k\in\omega$ we have $f_k\colon\omega\to[0,1]$ and $\sum_{n\in\omega}f_k(n)\le 1$. For each $n\in\omega$, by (the proof of) Lemma \ref{Ms_measure}, we also have: \[\lim_{k\to\infty}f_k(n)=\alpha_n\lambda_\kappa(B).\]
Finally, for each $k,n\in\omega$ it holds $|f_k(n)|\le1/2^{|s_n|}=\alpha_n$, so for the function $g\colon\omega\to[0,1]$, given for every $n\in\omega$ by the formula $g(n)=\alpha_n$, we have $|f_k(n)|\le g(n)$ for every $k,n\in\omega$ and $\sum_{n\in\omega}g(n)\le1$.

Using the $\sigma$-additivity of $\lambda_\kappa$ and Lebesgue's Dominated Convergence Theorem (applied to the functions $f_k$'s and $g$ and the counting measure on $\omega$), we get:
\[\lim_{k\to\infty}\lambda_\kappa(M_\alpha(k)\wedge B)=\lim_{k\to\infty}\sum_{n\in\omega}\lambda_\kappa(M_{s_n}(k)\wedge B)=\sum_{n\in\omega}\lim_{k\to\infty}\lambda_\kappa(M_{s_n}(k)\wedge B)=\]
\[=\sum_{n\in\omega}\alpha_n\lambda_\kappa(B)=\lambda_\kappa(B)\sum_{n\in\omega}\alpha_n=\alpha\lambda_\kappa(B),\]
which is what we wanted.

To finish the proof, note that as previously the Radon--Nikodym derivative $f_{M_\alpha}$ is constantly equal to $\alpha$ ($\lambda_\kappa$-a.e.).
\end{proof}

Theorem \ref{gm_reals} asserts that the chunk $\dot{\mu}_\mathcal{U}\restriction\dot{\mathcal{M}}_\sigma$ of every Solovay measure $\dot{\mu}_\mathcal{U}$ attains every ground model real value and is described by simple $\mathbb{M}_\kappa$-names (the elements of $\dot{\mathcal{M}}_\sigma$). It also shows that every two Solovay measures, whatever their associated ground model ultrafilters are, have the same values on $\dot{\mathcal{M}}_\sigma$. Note that by Proposition \ref{rho_extends} the interpretations of names from $\dot{\mathcal{M}}_\sigma$ through an $\mathbb{M}_\kappa$-generic filter do not belong to the ground model (except for $M_0$ and $M_1$).

Let us finish this section with the following remark. In Corollary \ref{nonatomic_ap}, referring to the proof of the classical Sierpi\'nski theorem, we claimed that every non-atomic P-measure on $\omega$ attains every value in the interval $[0,1]$. It appears however that in the case of Solovay P-measures one can obtain the same result without actually using Sierpi\'nski's theorem---it is enough to repeat the (slightly adjusted) argument presented in the proof of Theorem \ref{gm_reals}.

\subsection{Semi-selective ultrafilters\label{semi-sele}}

Now, we will show that for a particular subclass of P-points their Solovay measures are as far from being one-point measures as possible, in the sense that their Radon extensions are strictly positive on the sets
$K_\mathcal{U}$'s (and so their supports are as big as possible in the light of Proposition \ref{nuu_ku}).

Recall the following standard definition of a semi-selective ultrafilter.

\begin{df} An ultrafilter $\mathcal{U}$ on $\omega$ is called \emph{semi-selective} if whenever $(a_n)$ is a sequence of non-negative reals such that $\lim_{n\to\mathcal{U}} a_n =0$, then there is $X\in \mathcal{U}$ such that $\sum_{n\in X}a_n < \infty$.
\end{df}

Note that an ultrafilter $\mathcal{U}$ is semi-selective if and only if it is a rapid P-point (see e.g. \cite{Canjar}). In particular, every selective ultrafilter is semi-selective. The next theorem provides two characterizations of semi-selective ultrafilters: one in terms resembling the classical Borel--Cantelli lemma, and the other in terms of the supports of Solovay measures (cf. also \cite[Theorem 3.2]{Grebik}). 

\begin{thm}\label{semi-selective} Suppose $\mathcal{U}$ is an ultrafilter on $\omega$. Then the following are equivalent:
	\begin{enumerate}
	\item $\mathcal{U}$ is semi-selective;
	\item for every probability measure $\mu$ on $2^\kappa$ and sequence $(P_k)$ in $\mathrm{Bor}(2^\kappa)/\mathcal{N}(\mu)$, if $\lim_{k\to\mathcal{U}}\mu(P_k)=0$, then there is $X\in\mathcal{U}$ such that
	\[\bigwedge_{k\in X}\bigvee_{\substack{i\in X\\i>k}}P_i = 0;\]
	\item $\Vdash_{\mathbb{M}_\kappa}$``$\dot{\nu}_\mathcal{U}$ is strictly positive on $\dot{K}_\mathcal{U}$''.
	\end{enumerate}
\end{thm}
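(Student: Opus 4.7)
The plan is to close the cycle $(1)\Rightarrow(2)\Rightarrow(3)\Rightarrow(1)$, with $(1)\Rightarrow(2)$ and $(3)\Rightarrow(1)$ being Borel--Cantelli-style arguments and $(2)\Rightarrow(3)$ a forcing-theoretic translation.

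For $(1)\Rightarrow(2)$, given semi-selective $\mathcal{U}$, a probability measure $\mu$ on $2^\kappa$, and a sequence $(P_k)$ in $\mathrm{Bor}(2^\kappa)/\mathcal{N}(\mu)$ with $\lim_{k\to\mathcal{U}}\mu(P_k)=0$, I would apply semi-selectivity to the sequence $a_k=\mu(P_k)$ to obtain $X\in\mathcal{U}$ with $\sum_{k\in X}\mu(P_k)<\infty$. The classical (convergent) Borel--Cantelli lemma then yields $\mu(\limsup_{k\in X}P_k)=0$, which is precisely the required vanishing of $\bigwedge_{k\in X}\bigvee_{i\in X,\, i>k}P_i$ in $\mathrm{Bor}(2^\kappa)/\mathcal{N}(\mu)$.

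For $(3)\Rightarrow(1)$, assume $\mathcal{U}$ is not semi-selective, witnessed by $(a_n)$ with $\lim_{n\to\mathcal{U}}a_n=0$ and $\sum_{n\in X}a_n=\infty$ for every $X\in\mathcal{U}$. Using pairwise disjoint infinite blocks $I_n$ of coordinates in $\kappa$, I would build Borel sets $P_n\subseteq 2^\kappa$ depending only on $I_n$ and satisfying $\lambda_\kappa(P_n)=a_n$; these are mutually independent. Define $N\in\mathbb{M}_\kappa^\omega$ by $N(n)=P_n$ and consider the name $\dot{N}$. Since $\mu_N$ has total mass $\lim_{n\to\mathcal{U}}a_n=0$, its Radon--Nikodym derivative vanishes $\lambda_\kappa$-a.e., hence $\Vdash_{\mathbb{M}_\kappa}\dot{\mu}_\mathcal{U}(\dot{N})=0$. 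On the other hand, pairwise independence together with divergence of $\sum_{n\in X}a_n$ gives $\lambda_\kappa(\limsup_{n\in X}P_n)=1$ by the divergent Borel--Cantelli lemma, so Lemma \ref{almost} yields $\Vdash_{\mathbb{M}_\kappa}\dot{N}\cap \check{X}$ is infinite for every $X\in\mathcal{U}$. A finite-intersection/compactness argument in $\beta\omega$ then produces $\Vdash_{\mathbb{M}_\kappa}[\dot{N}]\cap\dot{K}_\mathcal{U}\neq\emptyset$, providing a clopen that witnesses the failure of strict positivity of $\dot{\nu}_\mathcal{U}$ on $\dot{K}_\mathcal{U}$.

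For $(2)\Rightarrow(3)$, suppose toward contradiction that some $p\in\mathbb{M}_\kappa$ forces $\dot{\nu}_\mathcal{U}$ not to be strictly positive on $\dot{K}_\mathcal{U}$. Using the Maximum Principle together with Lemmas \ref{names} and \ref{restr_incl}, I would pass to an $N\in\mathbb{M}_\kappa^\omega$ with $N(k)\le p$ for every $k$ such that $p\Vdash[\dot{N}]\cap\dot{K}_\mathcal{U}\neq\emptyset$ and $p\Vdash\dot{\mu}_\mathcal{U}(\dot{N})=0$. Lemma \ref{f-name}.(2) applied to the Radon--Nikodym derivative of $\mu_N$ converts the second clause into $\mu_N(p)=0$, and since $N(k)\le p$ this reads $\lim_{k\to\mathcal{U}}\lambda_\kappa(N(k))=0$. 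Applying (2) with $\mu=\lambda_\kappa$ and $P_k=N(k)$ produces $X\in\mathcal{U}$ with $\bigwedge_{k\in X}\bigvee_{i\in X,\,i>k}N(i)=0$, and then Lemma \ref{almost}, applied to the sequence equal to $N(n)$ for $n\in X$ and $0$ otherwise (which represents $\dot{N}\cap\check{X}$), forces $\dot{N}\cap\check{X}$ to be finite. But any ultrafilter in $\dot{K}_\mathcal{U}\cap[\dot{N}]$ (which exists below $p$) must contain both $X$ and $\dot{N}$, hence the finite set $\dot{N}\cap X$, contradicting the fact that $\dot{K}_\mathcal{U}\subseteq\omega^*$ consists only of non-principal ultrafilters.

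I expect the main obstacle to be the forcing-algebraic bookkeeping in $(2)\Rightarrow(3)$: reducing cleanly to the case $N(k)\le p$, moving faithfully between $p\Vdash\dot{\mu}_\mathcal{U}(\dot{N})=0$ and $\mu_N(p)=0$ through the Radon--Nikodym derivative, and justifying that (2) may be applied to $\lambda_\kappa$ (rather than to the auxiliary measure $\mu_N$) with $P_k=N(k)$. The Borel--Cantelli side is routine; the delicate point is the dictionary between the measure algebra $\mathbb{M}_\kappa$ and the combinatorics of $\dot{N}$ furnished by Lemma \ref{almost}.
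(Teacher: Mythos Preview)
Your plan is correct and follows essentially the same route as the paper: $(1)\Rightarrow(2)$ via the convergent Borel--Cantelli lemma, $(3)\Rightarrow(1)$ via an independent sequence and the divergent Borel--Cantelli lemma (the paper quotes \cite[Lemma 1.3.23]{BJ95} where you build the sets by hand from disjoint coordinate blocks), and $(2)\Rightarrow(3)$ by restricting to $P=M\wedge\vec{p}$, turning $p\Vdash\dot{\mu}_\mathcal{U}(\dot{P})=0$ into $\lim_{k\to\mathcal{U}}\lambda_\kappa(P(k))=0$, and applying (2) with $\mu=\lambda_\kappa$. The only cosmetic difference is that the paper argues $(2)\Rightarrow(3)$ directly (showing $p\Vdash\dot{\mu}_\mathcal{U}(\dot{M})=0$ implies $p\Vdash[\dot{M}]\cap\dot{K}_\mathcal{U}=\emptyset$) rather than by contradiction, and concludes via $p\Vdash[\check{X}]\cap[\dot{M}]=\emptyset$ instead of your equivalent ``a non-principal ultrafilter cannot contain a finite set'' formulation.
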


($\mathcal{N}(\mu)$ in (2) denotes the ideal of $\mu$-null Borel subsets of $2^\kappa$.)

\begin{proof} (1) $\implies$ (2). Let $\mu$ be a probability measure on $2^\kappa$ and $(P_k)$ a sequence in $\mathrm{Bor}(2^\kappa)/\mathcal{N}(\mu)$ such that $\lim_{k\to\mathcal{U}}\mu(P_k)=0$. Since $\mathcal{U}$ is semi-selective, there is $X\in \mathcal{U}$ such that $\sum_{k\in X}\mu(P_k)< \infty$. The Borel--Cantelli lemma implies that
\[\bigwedge_{k\in X}\bigvee_{\substack{i\in X\\i>k}} P_i = 0.\]
	\medskip

	(2) $\implies$ (3). Recall that $1\Vdash\supp(\dot{\nu}_\mathcal{U})\subseteq\dot{K}_\mathcal{U}$. Fix  $p\in\mathbb{M}_\kappa$ and $M\in\mathbb{M}_\kappa^\omega$ such that $p\Vdash\dot{\nu}_\mathcal{U}([\dot{M}]) = 0$. We will show that $p\Vdash[\dot{M}]\cap\dot{K}_\mathcal{U}=\emptyset$. Set $P = M \wedge \vec{p}$.

	We claim that $\lim_{k\to \mathcal{U}}\lambda_\kappa(\llbracket k \in\dot{P}\rrbracket) =0$. Indeed, since $P\le M$, $\mu_{P}\le\mu_{M}$, so $f_{P}\le f_{M}$ $\lambda_\kappa$-a.e., and hence, by Corollary
	\ref{f-name-ineq}, $1\Vdash\dot{\mu}_{\mathcal{U}}(\dot{P})\le\dot{\mu}_{\mathcal{U}}(\dot{M})$. It follows that $p\Vdash\dot{\mu}_{\mathcal{U}}(\dot{P})=0$. Since $P\le\vec{p}$, by the definition of $f_P$ we have $f_{P}=0$ $\lambda_\kappa$-a.e. on $1\setminus p$, so by Lemma \ref{f-name}.(2)
	we have $1\setminus p\Vdash\dot{\mu}_{\mathcal{U}}(\dot{P})=0$. In total, $1\Vdash\dot{\mu}_{\mathcal{U}}(\dot{P})=0$. Finally, again by Lemma \ref{f-name}.(2), we have that $f_{P}=0$ $\lambda_\kappa$-a.e., so $\lim_{k\to \mathcal{U}}\lambda_\kappa(\llbracket k \in\dot{P}\rrbracket)=\mu_{P}(1)=0$.

 	By (2) there is $X\in\mathcal{U}$ such that
	\[\tag{$*$}\bigwedge_{k\in X}\bigvee_{\substack{i\in X\\i>k}}\llbracket i\in\dot{P}\rrbracket=0.\]
	Since $\llbracket i\in\dot{P}\rrbracket=\llbracket i\in\dot{M}\rrbracket\wedge p$ for each $i\in\omega$, ($*$) means that $p\Vdash X\cap\dot{M}=^*\emptyset$, and thus  $p\Vdash[X]\cap[\dot{M}]=\emptyset$. From $1\Vdash\dot{K}_\mathcal{U}\subseteq[X]$ (recall that $X\in\mathcal{U}$) we get that $p\Vdash\dot{K}_\mathcal{U}\cap[\dot{M}]=\emptyset$.
	\medskip

	(3) $\implies$ (1). If $\mathcal{U}$ is not semi-selective, then there is a sequence $(a_n) \in [0,1]^\omega$ such that $\lim_{n\to\mathcal{U}} a_n = 0$
	and $\sum_{n\in X} a_n = \infty$ for each $X\in \mathcal{U}$. By \cite[Lemma 1.3.23]{BJ95}, there is $M\in \mathbb{M}^\omega_\kappa$ such that $\lambda_\kappa(M(k)) = a_k$ for each $k\in \omega$ and the sequence $(M(k))$ is $\lambda_\kappa$-independent. It follows that $\lim_{k\to \mathcal{U}}\lambda_\kappa(\llbracket k \in
	\dot{M} \rrbracket) =0$ and so $1\Vdash \dot{\mu}_\mathcal{U}(\dot{M})=0$. We claim that $1\Vdash[\dot{M}]\cap K_\mathcal{U}\ne\emptyset$. Indeed, if $X\in \mathcal{U}$, then $\sum_{k\in X}a_k = \infty$ and so, by the second Borel--Cantelli lemma,
\[\bigwedge_{k\in X}
	\bigvee_{\substack{n\in X\\n>k}}M(n) = 1.\]
	But this precisely means that $1\Vdash \dot{M} \cap \check{X} \ne^* \emptyset$. As $\mathcal{U}$ is a directed family and  $1\Vdash$``$\dot{K}_\mathcal{U}$ is compact and non-empty'', $1\Vdash[\dot{M}]\cap\dot{K}_\mathcal{U} \ne \emptyset$, which is a contradiction since we have $1\Vdash \dot{\mu}_\mathcal{U}(\dot{M})=0$ and $1\Vdash \supp(\dot{\nu}_\mathcal{U})=\dot{K}_\mathcal{U}$.
\end{proof}

As a corollary we get the following well-known theorem, proved by Kunen for $\kappa\ge\omega_2$ (see \cite[Lemma 5.2]{Kunen-special-points}).

\begin{cor}\label{cor:kunen} In $V^{\mathbb{M}_\kappa}$ no P-point extends a semi-selective ultrafilter from the ground model.
\end{cor}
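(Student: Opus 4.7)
The plan is to combine three ingredients already established in the paper: the characterization of semi-selective ultrafilters via strict positivity of Solovay measures on $K_\mathcal{U}$ (Theorem~\ref{semi-selective}), the non-atomicity of Solovay measures (Corollary~\ref{non-atomic}), and the fact that P-points cannot belong to the support of a non-atomic Borel measure on $\beta\omega$ (Proposition~\ref{Ppoint-measure}). Conceptually, the argument is a one-line contradiction, so the proof should just unpack definitions carefully.

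First, I would fix a semi-selective ultrafilter $\mathcal{U}$ in $V$ and suppose toward a contradiction that $G$ is an $\mathbb{M}_\kappa$-generic filter and $\mathcal{V}\in V[G]$ is a P-point on $\omega$ with $\mathcal{U}\subseteq\mathcal{V}$. By definition of $K_\mathcal{U}=\bigcap_{X\in\mathcal{U}}[X]$, the point $\mathcal{V}$ belongs to $K_\mathcal{U}$. By Theorem~\ref{semi-selective}, semi-selectivity of $\mathcal{U}$ forces $\dot{\nu}_\mathcal{U}$ to be strictly positive on $\dot{K}_\mathcal{U}$, and combined with Proposition~\ref{nuu_ku} (which gives $\dot{\nu}_\mathcal{U}(\dot{K}_\mathcal{U})=1$ and hence $\supp(\dot{\nu}_\mathcal{U})\subseteq\dot{K}_\mathcal{U}$) this means that in $V[G]$ we have $\supp(\nu_\mathcal{U})=K_\mathcal{U}$. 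In particular $\mathcal{V}\in\supp(\nu_\mathcal{U})$.

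On the other hand, Corollary~\ref{non-atomic} tells us that $\dot{\mu}_\mathcal{U}$ is non-atomic in $V[G]$, and by the standard equivalence recalled in the Preliminaries (a finitely additive measure on a Boolean algebra is non-atomic iff its Radon extension to the Stone space vanishes on points), the Radon extension $\nu_\mathcal{U}$ on $\beta\omega$ vanishes on points, i.e.\ is a non-atomic Borel measure. Proposition~\ref{Ppoint-measure}, applied to the non-atomic measure $\nu_\mathcal{U}$ and the P-point $\mathcal{V}$, yields $\mathcal{V}\notin\supp(\nu_\mathcal{U})$, contradicting the previous paragraph.

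There is no real obstacle here; the only subtle bookkeeping point is remembering that the Solovay name $\dot{\mu}_\mathcal{U}$ is a measure on $\omega$ while $\dot{\nu}_\mathcal{U}$ is its $\sigma$-additive Radon extension to $\beta\omega$, and checking that non-atomicity of the former gives atomlessness (vanishing on singletons) of the latter. Since this equivalence was stated explicitly in the Preliminaries, the corollary follows directly from the cited results without any further computation.
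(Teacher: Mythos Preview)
Your argument is correct and follows the paper's own proof essentially verbatim: both combine Theorem~\ref{semi-selective}, Corollary~\ref{non-atomic}, and Proposition~\ref{Ppoint-measure} to conclude that any extension $\mathcal{V}\supseteq\mathcal{U}$ lies in $K_\mathcal{U}=\supp(\nu_\mathcal{U})$ and hence cannot be a P-point. The only difference is that you spell out the auxiliary role of Proposition~\ref{nuu_ku} and the non-atomicity equivalence explicitly, which the paper leaves implicit.
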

\begin{proof} 
	Let $G$ be an $\mathbb{M}_\kappa$-generic filter over $V$. Let $\mathcal{U}$ be a semi-selective ultrafilter in $V$. Suppose that, in $V[G]$, an ultrafilter $\mathcal{W}$ extends $\mathcal{U}$. It means that $\mathcal{W} \in K_\mathcal{U}$.  By Corollary \ref{non-atomic} and Theorem \ref{semi-selective}, $K_\mathcal{U}$ supports a
	non-atomic measure. So, according to Proposition \ref{Ppoint-measure}, $\mathcal{W}$ cannot be a P-point. 
\end{proof}

\subsection{Canjar ultrafilters\label{sec:canjar}} The results of Section \ref{semi-sele} say that if we want to look for a P-point in the random model which is an extension of a ground model ultrafilter, then we have to avoid semi-selective ultrafilters. In particular,
whenever
$\mathcal{U}$ is a ground model semi-selective ultrafilter, then there is a decreasing sequence $(E_n)$ of infinite subsets of $\omega$ (in the extension) such that $\mathcal{U}\cup\{E_n\colon n\in \omega\}$ generates a filter which cannot be
further extended to a bigger filter by \emph{any} pseudointersection of $(E_n)$. (It is enough to take $\dot{E}_n$'s such that $\dot{\mu}_{\mathcal{U}}(\dot{E}_n)<1/(n+1)$ for each $n\in\omega$ (cf. Lemma \ref{Ms_measure}) and then use Theorem \ref{semi-selective}). In this
section we will show that such a sequence $(E_n)$ cannot exist for any Canjar ultrafilter $\mathcal{U}$. Therefore, perhaps the ground model Canjar ultrafilters have chances to be 'extendable' to a P-point in the random extension.

For an ultrafilter $\mathcal{U}$ and a condition $p\in\mathbb{M}_\kappa$, say that $M\in \mathbb{M}^\omega_\kappa$ is $(\mathcal{U},p)$-\emph{full} if for each $X\in \mathcal{U}$ we have
\[\lambda_\kappa\big(\bigvee_{k\in X} M(k)\wedge p\big)=\lambda_\kappa(p).\]
Equivalently, $M$ is $(\mathcal{U},p)$-full if and only if $\bigvee_{k\in X} M(k)\ge p$ for every $X\in\mathcal{U}$.

\begin{prop}\label{U-full} Assume that $\mathcal{U}$ is an ultrafilter on $\omega$ and $p\in\mathbb{M}_\kappa$ is a condition. Let $M\in \mathbb{M}_\kappa^\omega$. Then, 
	$M$ is $(\mathcal{U},p)$-full if and only if $p\Vdash$``$ \mathcal{U} \cup \{\dot{M}\}$ forms a base of a filter''.
\end{prop}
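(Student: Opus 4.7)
The plan is to translate both conditions in the proposition into the same statement about Boolean values in $\mathbb{M}_\kappa$, and to read off the equivalence directly. Since $\mathcal{U}\in V$ is a non-principal ultrafilter (hence already closed under finite intersections), for any $\dot{M}$ the statement ``$\mathcal{U}\cup\{\dot{M}\}$ forms a base of a filter'' is, over any extension $V[G]$, just the conjunction of the statements $\dot{M}\cap\check{X}\ne\emptyset$ for $X\in\mathcal{U}$. (Strictly speaking a filter base should generate a non-principal filter, i.e.\ $\dot{M}\cap\check{X}$ should be infinite; but if $\dot{M}\cap\check{X}\ne\emptyset$ for \emph{every} $X\in\mathcal{U}$, then applying this to each $X\setminus F$ for a finite $F\subseteq\omega$ already gives that $\dot{M}\cap\check{X}$ is infinite, so the two versions coincide.) Consequently,
\[
p\Vdash\text{``}\mathcal{U}\cup\{\dot{M}\}\text{ is a filter base''}\quad\Longleftrightarrow\quad\forall X\in\mathcal{U}\ \ p\Vdash\dot{M}\cap\check{X}\ne\emptyset.
\]

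Next, I would compute the Boolean value of $\dot{M}\cap\check{X}\ne\emptyset$. By Lemma \ref{names_conditions},
\[
\llbracket\dot{M}\cap\check{X}\ne\emptyset\rrbracket=\llbracket\exists k\in\check{X}\ k\in\dot{M}\rrbracket=\bigvee_{k\in X}\llbracket k\in\dot{M}\rrbracket=\bigvee_{k\in X}M(k).
\]
Hence $p\Vdash\dot{M}\cap\check{X}\ne\emptyset$ if and only if $p\le\bigvee_{k\in X}M(k)$. In the measure algebra $\mathbb{M}_\kappa$ the inequality $p\le q$ is equivalent to $\lambda_\kappa(p\wedge q)=\lambda_\kappa(p)$, so this in turn is equivalent to
\[
\lambda_\kappa\Big(\bigvee_{k\in X}M(k)\wedge p\Big)=\lambda_\kappa(p),
\]
which is exactly the defining inequality of $(\mathcal{U},p)$-fullness at the set $X$.

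Putting the two steps together, $p\Vdash$``$\mathcal{U}\cup\{\dot{M}\}$ is a base of a filter'' if and only if the above identity holds for every $X\in\mathcal{U}$, i.e.\ if and only if $M$ is $(\mathcal{U},p)$-full. There is no substantive obstacle: the only subtle point is verifying the absoluteness of the filter-base statement once $\mathcal{U}$ and the Boolean values of all $\dot{M}\cap\check{X}\ne\emptyset$ are fixed in $V$, and confirming that ``nonempty'' and ``infinite'' intersections give the same condition against a non-principal $\mathcal{U}$, as explained above.
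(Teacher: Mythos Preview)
Your proof is correct and takes essentially the same approach as the paper: both arguments reduce the statement ``$\mathcal{U}\cup\{\dot{M}\}$ is a filter base'' to the condition $\dot{M}\cap\check{X}\ne\emptyset$ for every $X\in\mathcal{U}$, and then identify $\llbracket\dot{M}\cap\check{X}\ne\emptyset\rrbracket=\bigvee_{k\in X}M(k)$. The paper phrases each direction as a short argument by contradiction (finding $q\le p$ with $q\Vdash X\cap\dot{M}=\emptyset$, or taking $q=p\setminus\bigvee_{k\in X}M(k)$), whereas you compute the Boolean value once and read off both directions; these are the same argument in different clothing. Your remark that ``nonempty'' and ``infinite'' coincide against a non-principal $\mathcal{U}$ is a nice clarification that the paper leaves implicit.
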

\begin{proof} Suppose first that $M$ is $(\mathcal{U},p)$-full but $p\nVdash$``$\mathcal{U} \cup \{\dot{M}\}$ forms a base of a filter''. It means that there is $q\le p$ and $X \in \mathcal{U}$ such that $q\Vdash X \cap \dot{M} = \emptyset$. But
	then $\bigvee_{k\in X} M(k) \land q = 0$ and so $M$ is not $(\mathcal{U},p)$-full.

	Conversely, suppose that $M$ is not $(\mathcal{U},p)$-full. Then, let $X\in \mathcal{U}$ be such that for $q = \bigvee_{k\in X} M(k)\wedge p$ we have $q<p$. Then, $p\setminus q \Vdash \dot{M} \cap X = \emptyset$, and so $p\setminus q \Vdash$ ``there is no filter containing $\mathcal{U}$ and $\dot{M}$''.
\end{proof}

In the following definition we treat $\mathcal{U}$ as a subset of $2^\omega$. Recall that a sequence $(I_n)$ of finite subsets of $\omega$ is \emph{an interval partition} if $\omega=\bigcup_{n\in\omega}I_n$ and $\max I_n=\min I_{n+1}-1$ for every $n\in\omega$.

\begin{df} An ultrafilter $\mathcal{U}$ is called \emph{Canjar}\footnote{The original definition of Canjar ultrafilters is different but equivalent to the following one (cf. \cite{Verner}).} if for every sequence $(\mathcal{C}_n)$ of compact subsets of $\mathcal{U}$ there is an interval partition $(I_n)$ of $\omega$ such that for every sequence $(X_n)$, where $X_n \in \mathcal{C}_n$ for every $n\in\omega$, we have
	\[ X = \bigcup_{n\in\omega}(X_n \cap I_n) \in \mathcal{U}. \]
\end{df}
Note that every Canjar ultrafilter is a P-point: if $(X_n)$ is a decreasing sequence in $\mathcal{U}$, then $X$ defined as above for the sequence $\mathcal{C}_n=\{X_n\}$ is a pseudointersection of $(X_n)$. The above definition is however stronger than that of a P-point---it says that we can diagonalize \emph{many} sequences of elements of an ultrafilter at
the same time.

The hope that Canjar ultrafilters may have an extension to a P-point in the random model comes from the following result and its consequence, Theorem \ref{almost-P-space}.

\begin{prop}\label{lem:canjar} Let $\mathcal{U}$ be a Canjar ultrafilter and fix a condition $p\in\mathbb{M}_\kappa$. Suppose that $(E_n)$ is a decreasing sequence of elements of $\mathbb{M}_\kappa^\omega$ which are $(\mathcal{U},p)$-full. Then, there is $(\mathcal{U},p)$-full $E\in \mathbb{M}_\kappa^\omega$ such that
	$E\leq^* E_n$ for each $n\in\omega$. 
\end{prop}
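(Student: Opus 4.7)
The plan is to apply the Canjar property of $\mathcal{U}$ to an appropriately chosen sequence $(\mathcal{C}_n)$ of compact subsets of $\mathcal{U}$, extract an interval partition $(I_n)$ of $\omega$, and then define $E\in\mathbb{M}_\kappa^\omega$ by $E(k)=E_n(k)$ for $k\in I_n$. Once $E$ is defined this way, the almost-inclusion $E\leq^* E_n$ is automatic: for $k$ lying in any $I_m$ with $m\geq n$ we have $E(k)=E_m(k)\leq E_n(k)$ by the decreasingness of $(E_n)$, and only finitely many $k$ (those in $I_0\cup\dots\cup I_{n-1}$) can fail this inequality. As a preparatory reduction, Lemma \ref{restr_incl} allows me to replace $E_n$ by $E_n\wedge\vec{p}$, so I may assume $E_n\leq\vec{p}$ and the $(\mathcal{U},p)$-fullness of $E_n$ amounts to $\bigvee_{k\in X}E_n(k)=p$ for every $X\in\mathcal{U}$.

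To choose the $\mathcal{C}_n$'s I exploit the interaction of P-pointness of $\mathcal{U}$ (which follows from Canjar) with the ccc of $\mathbb{M}_\kappa$. For each $n$ the downward directed family $\{\bigvee_{k\in X}E_n(k):X\in\mathcal{U}\}$ has infimum $\geq p$; by ccc, this infimum is realized by some countable decreasing subfamily $(X_{n,m})_m\subseteq\mathcal{U}$, and P-pointness then produces $A_n\in\mathcal{U}$ with $A_n\subseteq^* X_{n,m}$ for every $m$. The natural candidate is $\mathcal{C}_n=\{Y\subseteq\omega:Y\supseteq A_n\}$, which is closed in $2^\omega$ and contained in $\mathcal{U}$; applying the Canjar property to $(\mathcal{C}_n)$ yields the interval partition $(I_n)$.

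The heart of the proof — and the main obstacle — is verifying that the resulting $E$ is $(\mathcal{U},p)$-full. The natural contradiction setup is to suppose there exist $X\in\mathcal{U}$ and $q\in\mathbb{M}_\kappa$ with $0<q\leq p$ such that $q\wedge E_n(k)=0$ for every $n$ and every $k\in X\cap I_n$; this translates to $X\cap I_n\cap Z_n(q)=\emptyset$, where $Z_n(q)=\{k:E_n(k)\wedge q\neq 0\}$ is in $\mathcal{U}$ by the $(\mathcal{U},q)$-fullness of $E_n$ (which holds because $q\leq p$). The goal is to extract from $(X,q)$ a selection $Y_n\in\mathcal{C}_n$ with $\bigcup_n(Y_n\cap I_n)\notin\mathcal{U}$, contradicting Canjar.

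Making this last step go through will, I expect, require a finer choice of the $A_n$'s: rather than merely diagonalizing the witnesses $(X_{n,m})_m$ of $(\mathcal{U},p)$-fullness, one should simultaneously arrange $A_n\subseteq^* Z_n(q_j)$ for $(q_j)$ ranging over a countable dense subfamily of $(0,p]\subseteq\mathbb{M}_\kappa$ — a larger but still countable collection of sets in $\mathcal{U}$, which a P-point can pseudointersect. With such $A_n$'s, any hypothetical failure witness $(X,q)$ can be handled by approximating $q$ by some $q_j$ well enough that a set of the form $Y_n:=(X\cap Z_n(q))\cup A_n$ lies in $\mathcal{C}_n$ and has empty (or at least $\mathcal{U}$-null) $I_n$-trace, producing the Canjar-violating selection and closing the argument.
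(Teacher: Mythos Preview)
Your overall plan---produce an interval partition via Canjar and set $E(k)=E_n(k)$ for $k\in I_n$---matches the paper, but your choice $\mathcal{C}_n=\{Y:Y\supseteq A_n\}$ is too weak to close the argument. With such $\mathcal{C}_n$ the Canjar conclusion reduces to the single statement $\bigcup_n(A_n\cap I_n)\in\mathcal{U}$, i.e.\ ordinary P-point diagonalisation. Your candidate $Y_n=(X\cap Z_n(q))\cup A_n$ then satisfies $Y_n\cap I_n=(X\cap Z_n(q)\cap I_n)\cup(A_n\cap I_n)=A_n\cap I_n$ (since $X\cap I_n\cap Z_n(q)=\emptyset$ by the failure setup), so $\bigcup_n(Y_n\cap I_n)=\bigcup_n(A_n\cap I_n)\in\mathcal{U}$---no contradiction. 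Any other $Y_n\supseteq A_n$ only enlarges the union, so no selection can fall outside $\mathcal{U}$. The proposed ``finer choice'' of $A_n$ via a dense family $(q_j)$ gives $A_n\subseteq^* Z_n(q)$ for all $q\le p$, but this still does not let you push any selection out of $\mathcal{U}$; the obstacle is structural, not a matter of choosing $A_n$ more carefully.

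The missing idea is to define $\mathcal{C}_n$ \emph{measure-theoretically}:
\[
\mathcal{C}_n=\Big\{X\in\mathcal{U}:\ \lambda_\kappa\Big(\bigvee_{k\notin X}E_n(k)\wedge p\Big)\le\lambda_\kappa(p)-\tfrac{1}{n+1}\Big\}.
\]
This is compact in $2^\omega$, and---the crucial point---it contains every \emph{cofinite} set whose finite complement has small $E_n$-mass below $p$. If $E$ fails to be $(\mathcal{U},p)$-full, witnessed by $Y$ with $\lambda_\kappa\big(\bigvee_{k\in Y}E(k)\wedge p\big)\le\lambda_\kappa(p)-1/(n_0+1)$, then for $n>n_0$ the cofinite set $X_n=(I_n\cap Y)^c$ lies in $\mathcal{C}_n$ (since $\bigvee_{k\in I_n\cap Y}E_n(k)=\bigvee_{k\in I_n\cap Y}E(k)$), and $\bigcup_{n>n_0}(X_n\cap I_n)$ is disjoint from $Y$, forcing $Y\notin\mathcal{U}$. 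This is the Canjar-violating selection you were after, but it genuinely requires the richer, measure-based compact sets rather than upward cones over fixed $A_n$'s.
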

\begin{proof} For every $n\in\omega$ put 
	\[ \mathcal{C}_n = \Big\{X\in \mathcal{U}\colon\ \lambda_\kappa \Big(\bigvee_{k\in \omega\setminus X} E_n(k)\wedge p\Big)  \leq \lambda_\kappa(p)-1/(n+1)\Big\}. \]
			Notice that the set $\mathcal{C}_n$ is a compact subset of $\mathcal{U}$ such that $\omega\in\mathcal{C}_n$. Indeed, suppose that $X\in\wp(\omega)\setminus\mathcal{C}_n$. If $X\notin \mathcal{U}$, then $\omega\setminus X \in \mathcal{U}$ and so \[\lambda_\kappa
			\Big(\bigvee_{k\in \omega\setminus X} E_n(k)\wedge p\Big) = \lambda_\kappa(p),\] as $E_n$ is $(\mathcal{U},p)$-full. If $X\in \mathcal{U}\setminus \mathcal{C}_n$, then \[ \lambda_\kappa \Big(\bigvee_{k\in \omega\setminus X} E_n(k)\wedge p\Big)>\lambda_\kappa(p)-1/(n+1).\]
			In any case, there is $N\in \omega$ such that 
		\[ \lambda_\kappa \Big(\bigvee_{k\in N\setminus X} E_n(k)\wedge p\Big) > \lambda_\kappa(p)-1/(n+1). \]
		Then, $\{Y\subseteq \omega\colon Y\cap N = X\cap N\}$ is an open neighbourhood of $X$ disjoint with $\mathcal{C}_n$. It follows that $\mathcal{C}_n$ is a closed subset of $2^\omega$, hence it is compact.

	As $\mathcal{U}$ is Canjar, there is an interval partition $(I_n)$ such that, for every $(X_n)$ with $X_n \in \mathcal{C}_n$ for each $n\in\omega$, we have $\bigcup_{n\in\omega}(X_n \cap I_n)\in \mathcal{U}$.
	For every $n\in\omega$ and $k\in I_n$ set \[E(k)=E_n(k).\] 
	Clearly, $E\leq ^* E_n$ for each $n\in\omega$. We are going to show that $E$ is $(\mathcal{U},p)$-full. 

 	Suppose that $Y\subseteq \omega$ and $n_0\in \omega$ are such that 
	\[ \lambda_\kappa \Big(\bigvee_{k\in Y} E(k)\wedge p\Big) \leq \lambda_\kappa(p)-1/(n_0+1) < \lambda_\kappa(p). \]
	Let $X_n = (I_n \cap Y)^c$ for each $n\in\omega$. Since $X_n^c$ is finite, $X_n\in\mathcal{U}$. Then, for every $n>n_0$ we have:
	\[ \lambda_\kappa \Big(\bigvee_{k\in\omega\setminus X_n} E_n(k)\wedge p\Big) = \lambda_\kappa \Big(\bigvee_{k\in I_n \cap Y} E_n (k)\wedge p\Big) =\] \[= \lambda_\kappa\Big(\bigvee_{k\in I_n \cap Y} E(k)\wedge p\Big) \leq \lambda_\kappa\Big(\bigvee_{k\in Y} E(k)\wedge p\Big) \leq \lambda_\kappa(p)-1/(n+1), \]
 so $X_n \in \mathcal{C}_n$.
	Hence, $X = \bigcup_{n>n_0} (X_n \cap I_n) \in \mathcal{U}$. But $X \cap Y = \emptyset$ and so $Y\notin \mathcal{U}$, and we are done.
\end{proof}

Recall that a topological space is \emph{an almost P-space} if its every non-empty $\mathbb{G}_\delta$ subset has non-empty interior. 

\begin{thm}\label{almost-P-space} Suppose $\mathcal{U}$ is a Canjar ultrafilter in $V$. Then, $\Vdash_{\mathbb{M}_\kappa}$``$\dot{K}_\mathcal{U}$ is an almost P-space''. 
\end{thm}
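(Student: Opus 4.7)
The plan is to show directly that in $V[G]$ every non-empty $G_\delta$ subset $G$ of $K_\mathcal{U}$ has non-empty interior. First, in $V[G]$, pick $x\in G$ and write $G=\bigcap_n W_n$ with $(W_n)$ a decreasing sequence of open subsets of $K_\mathcal{U}$; using the clopen base of $\beta\omega$ choose $E_n\subseteq\omega$ with $x\in[E_n]\cap K_\mathcal{U}\subseteq W_n$, and replacing $E_n$ by $E_0\cap\cdots\cap E_n$ (still in the ultrafilter $x$) I may assume $(E_n)$ is decreasing. It suffices to produce $E\subseteq\omega$ in $V[G]$ with $E\subseteq^* E_n$ for each $n$ and such that $\mathcal{U}\cup\{E\}$ has the finite intersection property: then $[E]\cap K_\mathcal{U}$ is a non-empty clopen subset of $K_\mathcal{U}$ (as $K_\mathcal{U}\subseteq\omega^*$) contained in $\bigcap_n[E_n]\cap K_\mathcal{U}\subseteq G$.

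To construct $E$, I pass to ground-model names. Let $\dot{E}_n$ be names coming from $M_n\in\mathbb{M}_\kappa^\omega$ as in Section \ref{sec:names-subsets}, and fix a condition $p\in G$ forcing both that $(\dot{E}_n)$ is decreasing and that $\mathcal{U}\cup\{\dot{E}_n\colon n\in\omega\}$ has the finite intersection property (equivalently, $p\Vdash\bigcap_n[\dot{E}_n]\cap\dot{K}_\mathcal{U}\ne\emptyset$). Replacing each $M_n$ by $M_n\wedge\vec{p}$ and invoking Lemma \ref{restr_incl}, I may assume that $(M_n)$ itself is $\le$-decreasing in $\mathbb{M}_\kappa^\omega$ without changing the forced values below $p$. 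Since $p\Vdash\mathcal{U}\cup\{\dot{M}_n\}$ is a base of a filter, Proposition \ref{U-full} then gives that each $M_n$ is $(\mathcal{U},p)$-full.

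Now I apply Proposition \ref{lem:canjar} to the decreasing sequence $(M_n)$ of $(\mathcal{U},p)$-full elements to obtain $E\in\mathbb{M}_\kappa^\omega$ which is $(\mathcal{U},p)$-full and satisfies $E\le^* M_n$ for every $n$. By Lemma \ref{almost} the corresponding name $\dot{E}$ satisfies $p\Vdash\dot{E}\subseteq^*\dot{E}_n$ for all $n$, and by Proposition \ref{U-full} the fullness of $E$ translates to $p\Vdash\mathcal{U}\cup\{\dot{E}\}$ is a base of a filter, so $p\Vdash[\dot{E}]\cap\dot{K}_\mathcal{U}\ne\emptyset$. Passing back to $V[G]$, this yields the desired non-empty clopen subset of $K_\mathcal{U}$ sitting inside $G$.

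The main point where care is needed is the bookkeeping of passing from a $G_\delta$ in $V[G]$ to a genuinely $\le$-decreasing sequence of $(\mathcal{U},p)$-full elements of $\mathbb{M}_\kappa^\omega$ below a single condition $p$: one has to verify that the ``decreasing modulo $p$'' property of the names really translates to honest $\le$-decrease after the $\wedge\vec{p}$ restriction of Lemma \ref{restr_incl}, and that $(\mathcal{U},p)$-fullness is preserved under this move. Once this dictionary is installed, the substantive content of the theorem is exactly Proposition \ref{lem:canjar}.
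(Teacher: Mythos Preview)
Your proposal is correct and follows essentially the same approach as the paper's proof: reduce the $G_\delta$ to a decreasing sequence of clopen traces $[\dot{E}_n]\cap\dot{K}_\mathcal{U}$, use Lemma \ref{restr_incl} to make the corresponding $M_n$'s genuinely $\le$-decreasing below a condition $p$, invoke Proposition \ref{U-full} to get $(\mathcal{U},p)$-fullness, apply Proposition \ref{lem:canjar}, and translate back via Proposition \ref{U-full} and Lemma \ref{almost}. The paper simply skips the preliminary ``pick $x\in G$ and choose clopens'' step by appealing to the Maximum Principle to name the $E_n$'s directly, but the substantive argument is identical; your remark that the main content is Proposition \ref{lem:canjar} is exactly right. (One cosmetic point: you use $G$ both for the $G_\delta$ set and the generic filter.)
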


\begin{proof}
Suppose that $p\in\mathbb{M}_\kappa$ is a condition and $\dot{A}$ is an $\mathbb{M}_\kappa$-name such that $p\Vdash$``$\dot{A}$ is a non-empty $\mathbb{G}_\delta$ subset of $\dot{K}_\mathcal{U}$''. Then, by the Maximum Principle and Lemma \ref{restr_incl}, there is a decreasing sequence $(E_n)$ of elements of $\mathbb{M}^\omega_\kappa$ such that \[p\Vdash\emptyset \ne \bigcap_n
	[\dot{E}_n] \cap \dot{K}_\mathcal{U} \subseteq \dot{A}.\]

	Since for each $n\in\omega$ we have $p\Vdash [\dot{E}_n] \cap \dot{K}_\mathcal{U} \ne \emptyset$, Proposition \ref{U-full} implies that each $E_n$ is $(\mathcal{U},p)$-full. By Proposition \ref{lem:canjar} there is a $(\mathcal{U},p)$-full $E\in \mathbb{M}^\omega_\kappa$
	such that $E\le^* E_n$ for every $n\in\omega$. Then, again by Proposition \ref{U-full},
\[ p\Vdash[\dot{E}] \cap \dot{K}_\mathcal{U} \ne \emptyset \mbox{ and }\dot{E} \subseteq^* \dot{E_n} \mbox{ for each }n\in\omega,\] and so \[ p\Vdash\emptyset \ne
		[\dot{E}] \cap \dot{K}_\mathcal{U}
	\subseteq \bigcap_n [\dot{E}_n] \cap \dot{K}_\mathcal{U} \subseteq \dot{A}.\]
Since $p$ and $\dot{A}$ were arbitrary, we are done.
\end{proof}

\bibliographystyle{alpha}
\bibliography{bib-ppoint}

\begin{thebibliography}{BFPRN01}

\bibitem[Abr80]{Abramson}
Fred~G. Abramson.
\newblock A simplicity theorem for amoebas over random reals.
\newblock {\em Proc. Amer. Math. Soc.}, 78(3):409--413, 1980.

\bibitem[BFPRN01]{PlebanekRyll}
Andreas Blass, Ryszard Frankiewicz, Grzegorz Plebanek, and Czes{\l}aw
  Ryll-Nardzewski.
\newblock A note on extensions of asymptotic density.
\newblock {\em Proc. Amer. Math. Soc.}, 129(11):3313--3320, 2001.

\bibitem[BHV13]{Verner}
Andreas Blass, Michael Hru\v{s}\'{a}k, and Jonathan Verner.
\newblock On strong {$P$}-points.
\newblock {\em Proc. Amer. Math. Soc.}, 141(8):2875--2883, 2013.

\bibitem[BJ95]{BJ95}
Tomek Bartoszy\'{n}ski and Haim Judah.
\newblock {\em Set theory}.
\newblock A K Peters, Ltd., Wellesley, MA, 1995.
\newblock On the structure of the real line.

\bibitem[Bog07]{BogMT2}
Vladimir~I. Bogachev.
\newblock {\em Measure theory. {V}ol. {I}, {II}}.
\newblock Springer-Verlag, Berlin, 2007.

\bibitem[BRBR83]{RaoRao}
K.~P.~S. Bhaskara~Rao and M.~Bhaskara~Rao.
\newblock {\em Theory of charges}, volume 109 of {\em Pure and Applied
  Mathematics}.
\newblock Academic Press, Inc. [Harcourt Brace Jovanovich, Publishers], New
  York, 1983.
\newblock A study of finitely additive measures, With a foreword by D. M.
  Stone.

\bibitem[Bre04]{Jorg-problems}
J\"{o}rg Brendle.
\newblock Recent developments in iterated forcing theory.
\newblock In {\em Classical and new paradigms of computation and their
  complexity hierarchies}, volume~23 of {\em Trends Log. Stud. Log. Libr.},
  pages 47--60. Kluwer Acad. Publ., Dordrecht, 2004.

\bibitem[Can90]{Canjar}
R.~Michael Canjar.
\newblock On the generic existence of special ultrafilters.
\newblock {\em Proc. Amer. Math. Soc.}, 110(1):233--241, 1990.

\bibitem[CG19]{David-Osvaldo}
David Chodounsk\'{y} and Osvaldo Guzm\'{a}n.
\newblock There are no {P}-points in {S}ilver extensions.
\newblock {\em Israel J. Math.}, 232(2):759--773, 2019.

\bibitem[Coh79]{Cohen}
Paul~E. Cohen.
\newblock {$P$}-points in random universes.
\newblock {\em Proc. Amer. Math. Soc.}, 74(2):318--321, 1979.

\bibitem[Dow20]{Dow-Pfilters}
Alan Dow.
\newblock P-filters and {C}ohen, random, and {L}aver forcing.
\newblock {\em Topology Appl.}, 281:107200, 16, 2020.

\bibitem[FGS06]{Sakae}
Sakae Fuchino, Noam Greenberg, and Saharon Shelah.
\newblock Models of real-valued measurability.
\newblock {\em Ann. Pure Appl. Logic}, 142(1-3):380--397, 2006.

\bibitem[FGZ05]{FGZ}
Ryszard Frankiewicz, Magdalena Grzech, and Pawe{\l} Zbierski.
\newblock Fat {$P$}-sets in the space {$\omega^*$}.
\newblock {\em Bull. Pol. Acad. Sci. Math.}, 53(2):121--129, 2005.

\bibitem[FSZ93]{FSZ}
Rvszard Frankiewicz, Saharon Shelah, and Pawe{\l} Zbierski.
\newblock On closed {$P$}-sets with ccc in the space {$\omega^*$}.
\newblock {\em J. Symbolic Logic}, 58(4):1171--1176, 1993.

\bibitem[Gre19]{Grebik}
Jan Greb\'{\i}k.
\newblock Ultrafilter extensions of asymptotic density.
\newblock {\em Comment. Math. Univ. Carolin.}, 60(1):25--37, 2019.

\bibitem[Kun76]{Kunen-special-points}
Kenneth Kunen.
\newblock Some points in {$\beta N$}.
\newblock {\em Math. Proc. Cambridge Philos. Soc.}, 80(3):385--398, 1976.

\bibitem[Mek84]{Mekler}
Alan~H. Mekler.
\newblock Finitely additive measures on {${\rm {\bf N}}$} and the additive
  property.
\newblock {\em Proc. Amer. Math. Soc.}, 92(3):439--444, 1984.

\bibitem[Sch93]{Scheepers}
Marion Scheepers.
\newblock Gaps in {$\omega^\omega$}.
\newblock In {\em Set theory of the reals ({R}amat {G}an, 1991)}, volume~6 of
  {\em Israel Math. Conf. Proc.}, pages 439--561. Bar-Ilan Univ., Ramat Gan,
  1993.

\bibitem[Sco67]{Scott}
Dana Scott.
\newblock A proof of the independence of the continuum hypothesis.
\newblock {\em Math. Systems Theory}, 1:89--111, 1967.

\bibitem[She98]{Shelah-proper}
Saharon Shelah.
\newblock {\em Proper and improper forcing}.
\newblock Perspectives in Mathematical Logic. Springer-Verlag, Berlin, second
  edition, 1998.

\bibitem[Sie34]{Sie22}
Wac{\l}aw Sierpi{\'n}ski.
\newblock Sur les fonctions jouissant de la propri\'{e}t\'{e} de {B}aire de
  fonctions continues.
\newblock {\em Ann. of Math. (2)}, 35(2):278--283, 1934.

\bibitem[Sol71]{Solovay}
Robert~M. Solovay.
\newblock Real-valued measurable cardinals.
\newblock In {\em Axiomatic set theory ({P}roc. {S}ympos. {P}ure {M}ath.,
  {V}ol. {XIII}, {P}art {I}, {U}niv. {C}alifornia, {L}os {A}ngeles, {C}alif.,
  1967)}, pages 397--428, 1971.

\end{thebibliography}

\end{document}